
\documentclass[a4paper,11pt]{article}%
\usepackage{amssymb}
\usepackage{graphicx}
\usepackage{amsmath}%
\setcounter{MaxMatrixCols}{30}%
\usepackage{amsfonts}
\providecommand{\U}[1]{\protect\rule{.1in}{.1in}}
\newtheorem{theorem}{Theorem}[section]

\newtheorem{corollary}[theorem]{Corollary}

\newtheorem{lemma}[theorem]{Lemma}

\newtheorem{proposition}[theorem]{Proposition}
\newtheorem{remark}[theorem]{Remark}

\numberwithin{equation}{section}
\newenvironment{proof}[1][Proof]{\textbf{#1.} }{\ \rule{0.5em}{0.5em}}
\parindent0em
\topskip0cm
\headheight0cm \headsep0cm  \textheight22.5cm
\parindent0em    \textwidth 15.5cm
\oddsidemargin0.25cm
\oddsidemargin0.28cm
\evensidemargin0.28cm

\parskip0.3cm
\parindent0cm

\begin{document}

\title{Asymptotic Equivalence of Spectral Density \\[0.3cm] Estimation and Gaussian White Noise}
\author{\textsc{Georgi K. Golubev, Michael Nussbaum}$^{1}$\textsc{\bigskip\ and
Harrison H. Zhou}$^{1}$\\\textit{Universit\'{e} de Provence, Cornell University and Yale University }}
\date{}
\maketitle

\begin{abstract}
We consider the statistical experiment given by a sample $y(1),\ldots,y(n)$ of
a stationary Gaussian process with an unknown smooth spectral density $f$.
Asymptotic equivalence, in the sense of Le Cam's deficiency $\Delta$-distance,
to two Gaussian experiments with simpler structure is established. The first
one is given by independent zero mean Gaussians with variance approximately
$f(\omega_{i})$ where $\omega_{i}$ is a uniform grid of points in $(-\pi,\pi)$
(nonparametric Gaussian scale regression). This approximation is closely
related to well-known asymptotic independence results for the periodogram and
corresponding inference methods. The second asymptotic equivalence is to a
Gaussian white noise model where the drift function is the log-spectral
density. This represents the step from a Gaussian scale model to a location
model, and also has a counterpart in established inference methods, i.e.
log-periodogram regression. The problem of simple explicit equivalence maps
(Markov kernels), allowing to directly carry over inference, appears in this
context but is not solved here.

\end{abstract}

\begin{table}[b]
\rule{3cm}{0.01cm} \newline
\par
$^{1}${\small Supported in part by NSF\ Grant DMS-03-06497 \newline1991
Mathematics Subject Classification. 62G07, 62G20}\newline{\small \textit{Key
words and phrases.} Stationary Gaussian process, spectral density, Sobolev
classes, Le Cam distance, asymptotic equivalence, Whittle likelihood,
log-periodogram regression, nonparametric Gaussian scale model, signal in
Gaussian white noise.}\end{table}

\section{Introduction and main results}

Estimation of the spectral density $f(\omega)$, $\omega\in\lbrack-\pi,\pi]$ of
a stationary process is an important and traditional problem of mathematical
statistics. We observe a sample $y^{(n)}=\left(  y(1),\ldots,y(n)\right)
^{\prime}$ from a real Gaussian stationary sequence $y(t)$ with $\mathrm{E}%
y(t)=0$ and autocovariance function $\gamma(h)=\mathrm{E}y(t)y(t+h)$. Consider
the spectral density, defined on $[-\pi,\pi]$ by
\begin{equation}
f(\omega)=\frac{1}{2\pi}\sum_{h=-\infty}^{\infty}\gamma(h)\mathrm{\exp
}{\normalsize (\mathrm{i}h\omega)}\label{first-equa}%
\end{equation}
where it is assumed that $\sum_{h=-\infty}^{\infty}\gamma^{2}(h)<\infty$. Let
$\Gamma_{n}$ be the $n\times n$ Toeplitz covariance matrix associated with
$\gamma(\cdot),$ i.e. the matrix with entries
\begin{equation}
\left(  \Gamma_{n}\right)  _{j,k}=\gamma(k-j)=\int_{-\pi}^{\pi}\exp\left(
\mathrm{i}\,(k-j)\omega\right)  f(\omega)\mathrm{\,d}\omega,\qquad
j,k=1,\ldots,n.\label{gamma-n-def}%
\end{equation}
Write $\Gamma_{n}(f)$ for the covariance matrix corresponding to spectral
density $f$ and note that $y^{(n)}$ has a multivariate normal distribution
$N_{n}(0,\Gamma_{n}(f))$. Let $\Sigma$ be a nonparametric set of spectral
densities to be described below. We are interested in the approximation of the
statistical experiment
\begin{equation}
\mathcal{E}_{n}=\left(  N_{n}(0,\Gamma_{n}(f)),f\in\Sigma\right)
\label{exper-1}%
\end{equation}
in the sense of Le Cam's deficiency pseudodistance $\Delta(\cdot,\cdot)$; see
the end of this section for a precise definition. The statistical
interpretation of the Le Cam distance is as follows. For two experiments
$\mathcal{E}$ and $\mathcal{F}$ having the same parameter space,
$\Delta(\mathcal{E},\mathcal{F})<\varepsilon$ implies that for any decision
problem with loss bounded by $1$ and any statistical procedure with the
experiment $\mathcal{E}$ there is a (randomized) procedure with $\mathcal{F}$
the risk of which evaluated in $\mathcal{F}$ nearly matches (within
$\varepsilon$) the risk of the original procedure evaluated in $\mathcal{E}$.
In this statement the roles of $\mathcal{E}$ and $\mathcal{F}$ can also be
reversed. Two sequences $\mathcal{E}_{n},\mathcal{F}_{n}$ are said to be
\textit{asymptotically equivalent} if $\Delta(\mathcal{E}_{n},\mathcal{F}%
_{n})\rightarrow0$.

As a guide to what can be expected, consider first the case where
$f_{\vartheta}$, $\vartheta\in\Theta$ is a smooth parametric family of
spectral densities. Assume that $\Theta$ is a real interval; under some
regularity conditions, the model is well known to fulfill the standard
LAN\ conditions with localization rate $n^{-1/2}$ and normalized Fisher
information at $\vartheta$%
\[
\frac{1}{4\pi}\int_{-\pi}^{\pi}\left(  \frac{\partial}{\partial\vartheta}\log
f_{\vartheta}(\omega)\right)  ^{2}d\omega
\]
(Davies (1973), Dzhaparidze (1985), chap. I.3, cf. also the discussion in van
der Vaart (1998), Example 7.17). Consider the parametric Gaussian white noise
model where the signal is the log-spectral density:
\begin{equation}
dZ_{\omega}=\log f_{\vartheta}(\omega)d\omega+2\pi^{1/2}n^{-1/2}dW_{\omega
}\text{, }\omega\in\lbrack-\pi,\pi]\label{par-whitenoise}%
\end{equation}
and note that in the family $\left(  f_{\vartheta},\vartheta\in\Theta\right)
$, this model has the same asymptotic Fisher information. This is in agreement
with the LAN result for the spectral density model, but it suggests that the
above white noise approximation might also be true for larger (i.e.
nonparametric) spectral density classes $\Sigma$.

As a second piece of evidence for the white noise approximation in the
nonparametric case we take known results about the approximate spectral
decomposition of the Toeplitz covariance matrix $\Gamma_{n}(f)$. It is a
classical difficulty in time series analysis that the exact eigenvalues and
eigenvectors of $\Gamma_{n}(f)$ cannot easily be found and used for inference
about $f$; in particular, the eigenvectors depend on $f$. However for an
approximation which is a circulant matrix (denoted $\tilde{\Gamma}_{n}(f)$
below), the eigenvectors are independent of $f$ and the eigenvalues are
approximately $f(\omega_{j})$ where $\omega_{j}$ are the points of an
equispaced grid of size $n$ in $[-\pi,\pi]$. If the approximation by
$\tilde{\Gamma}_{n}(f)$ were justified, one could apply an orthogonal
transformation to the data $y^{(n)}$ and obtain a Gaussian scale model
\begin{equation}
z_{j}=f^{1/2}(\omega_{j})\xi_{j}\text{, }j=1,\ldots,n\label{Gauss-scale-model}%
\end{equation}
where $\xi_{j}$ are independent standard normal. For this model, nonparametric
asymptotic equivalence theory was developed in Grama and Nussbaum (1998).
Results there, for certain smoothness classes $f\in\Sigma$, with $f$ bounded
away from $0$, lead to the nonparametric version of the white noise model
(\ref{par-whitenoise})
\begin{equation}
dZ_{\omega}=\log f(\omega)d\omega+2\pi^{1/2}n^{-1/2}dW_{\omega}\text{, }%
\omega\in\lbrack-\pi,\pi]\text{, }f\in\Sigma.\label{np-whitenoise}%
\end{equation}
Our proof of asymptotic equivalence will in fact be based on the approximation
of the covariance matrix $\Gamma_{n}(f)$ by the circulant $\tilde{\Gamma}%
_{n}(f)$, cf. Brockwell and Davis (1991), \S \ 4.5. However we shall see that
this tool does not enable a staightforward approximation of the data $y^{(n)}$
in total variation or Hellinger distance. Therefore our argument for
asymptotic equivalence will be somewhat indirect, involving \textquotedblright
bracketing\textquotedblright\ of the experiment $\mathcal{E}_{n}$ by upper and
lower bounds (in the sense of informativity) and also a preliminary
localization of the parameter space.

To formulate our main result, define a parameter space $\Sigma$ of spectral
densities as follows. For $M>0$, define a set of real valued even functions on
$[-\pi,\pi]$
\[
\mathcal{F}_{M}=\left\{  f:M^{-1}\leq f(\omega)\text{, }f(\omega
)=f(-\omega),\omega\in\lbrack-\pi,\pi]\right\}  .
\]
Thus our spectral densities are assumed uniformly bounded away from $0.$ Let
$L_{2}(-\pi,\pi)$ be the usual (real) $L_{2}$-space on $[-\pi,\pi]$; for any
$f\in L_{2}(-\pi,\pi)$, let $\gamma_{f}(k)$, $k\in\mathbb{Z}$ be the Fourier
coefficients according to (\ref{first-equa}). For any $\alpha>0$ and $M>0$
let
\begin{equation}
W^{\alpha}(M)=\left\{  f\in L_{2}(-\pi,\pi):\gamma_{f}^{2}(0)+\sum_{k=-\infty
}^{\infty}|k|^{2\alpha}\gamma_{f}^{2}(k)\leq M\right\}  .\label{sobol-ball}%
\end{equation}
These sets correspond to balls in the periodic fractional Sobolev scale with
smoothness coefficient $\alpha$. Note that for $\alpha>1/2$, by an embedding
theorem (Lemma \ref{lem-sobol-embed}, Appendix), functions in $W^{\alpha}(M)$
are also uniformly bounded. Define an a priori set for given $\alpha>0,$
$M>0$
\[
\Sigma_{\alpha,M}=W^{\alpha}(M)\cap\mathcal{F}_{M}.
\]
Consider also a Gaussian scale model (\ref{Gauss-scale-model}) where the
values $f(\omega_{j})$ are replaced by local averages
\[
J_{j,n}\left(  f\right)  =n\int_{\left(  j-1\right)  /n}^{j/n}f(2\pi
x-\pi)dx,\text{ }j=1,\ldots,n
\]

\begin{theorem}
\label{theor-main-1}Let $\Sigma$ be a set of spectral densities contained in
$\Sigma_{\alpha,M}$ for some $M>0$ and $\alpha>1/2$. Then the experiments
given by observations
\begin{align*}
& y(1),\ldots,y(n)\text{,}\text{ a stationary centered Gaussian sequence with
spectral density }f\\
& z_{1},\ldots,z_{n}\text{, where }z_{j}\text{ are independent }%
N(0,J_{j,n}\left(  f\right)  )
\end{align*}
with $f\in\Sigma$ are asymptotically equivalent.
\end{theorem}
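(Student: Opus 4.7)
The plan is to exploit the DFT-based circulant approximation $\tilde{\Gamma}_n(f)$ of the Toeplitz covariance $\Gamma_n(f)$. Let $U$ denote the $n \times n$ unitary DFT matrix. Applying $U$ to $y^{(n)}$ is a bijection and hence preserves the experiment exactly, turning $\mathcal{E}_n$ into $N(0, U^{\ast} \Gamma_n(f) U)$. Under the circulant approximation (Brockwell--Davis \S 4.5) this would be the diagonal matrix $\mathrm{diag}(J_{j,n}(f))$, which is exactly the covariance of the target scale experiment. The entire work of the proof is to quantify in $\Delta$-distance the gap between these two covariances.

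Because direct Hellinger comparison of $N(0,\Gamma_n(f))$ with $N(0,\tilde{\Gamma}_n(f))$ fails -- the Hilbert--Schmidt residual is typically of order one even when the operator norm is small -- I would use a bracketing argument. First, add parameter-free independent Gaussian noise of covariance $c_n I$ to $y^{(n)}$ (a Markov kernel, hence a less informative experiment), and do the same symmetrically to the scale model. After applying $U$ the two noisy models have covariances $U^{\ast}\Gamma_n(f) U + c_n I$ and $\mathrm{diag}(J_{j,n}(f)) + c_n I$. Both are bounded below by $c_n I$, so the Gaussian Hellinger distance between them is essentially controlled by $c_n^{-2} \|U^{\ast} \Gamma_n(f) U - \mathrm{diag}(J_{j,n}(f))\|_{\mathrm{HS}}^2$, which is small provided $c_n$ is not too small; conversely the Hellinger distance between the original and noise-inflated Gaussians is of order $c_n n^{1/2}$, which is small provided $c_n \ll n^{-1/2}$. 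A preliminary localization via sample splitting (use a vanishing initial block of the sample to construct a pilot $\hat f_n$ by periodogram smoothing, then restrict to $\{f : \|f - \hat f_n\| \le r_n\}$) provides the uniform operator bounds on $\Gamma_n(f)^{-1}$ and on $W^\alpha$-norm-dependent constants that enter the Hellinger estimates, at only an $o(1)$ cost in $\Delta$-distance.

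The main obstacle is the joint calibration of $c_n$ and $r_n$: the HS residual $\|U^{\ast} \Gamma_n(f) U - \mathrm{diag}(J_{j,n}(f))\|_{\mathrm{HS}}$ must satisfy $\|\cdot\|_{\mathrm{HS}} \ll c_n \ll n^{-1/2}$ so that the bracket closes. This is precisely where the smoothness assumption $\alpha>1/2$ enters: via the decay of the Fourier coefficients $\gamma_f(k)$ one expects an HS estimate of order $n^{-\alpha}$ (up to logs) for the circulant--Toeplitz residual, together with a separate quadrature bound for the replacement of $f(\omega_j)$ by the local average $J_{j,n}(f)$. Lemma~\ref{lem-sobol-embed} ensures $f$ is uniformly bounded, so the Gaussian comparison constants depend only on $M$. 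Matching the two rates uniformly over the localized parameter ball, and absorbing the quadrature error, is the technical heart of the argument; the inequality $\alpha > 1/2$ is exactly what creates a non-empty window for $c_n$.
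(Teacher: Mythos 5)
Your diagnosis of the core difficulty (direct Hellinger comparison of $N_n(0,\Gamma_n(f))$ and $N_n(0,\tilde\Gamma_n(f))$ fails because the Hilbert--Schmidt residual is $O(1)$) is correct, and the idea of bracketing by less informative experiments is the right general strategy. But the specific bracketing you propose, adding independent noise $c_nI$ to both sides, cannot close, and the reason is contained in your own calibration.

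The gap is in your claim that the HS residual ``is of order $n^{-\alpha}$ (up to logs).'' It is not. Lemma~\ref{lem-covmatnorm-sobol} computes it explicitly:
\[
\|\Gamma_n(f)-\tilde\Gamma_n(f)\|^2 = 2\sum_{k=1}^{(n-1)/2} k\,(\gamma(k)-\gamma(n-k))^2 \asymp |f|_{2,1/2}^2,
\]
which is $\Theta(1)$ for generic $f\in\Sigma_{\alpha,M}$; the smoothness $\alpha>1/2$ only makes it finite, not small. Moreover, the noise inflation does not improve the Gaussian comparison constant in Lemma~\ref{lem-helldist-covmatr}, because $\lambda_{\min}(\Gamma_n(f))$ is already bounded away from zero uniformly (Brockwell--Davis Prop.\ 4.5.3); the relevant constant depends on $M$, not on $c_n$. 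So the proposed window $\|\cdot\|_{\mathrm{HS}} \ll c_n \ll n^{-1/2}$ is empty: the left side is $\Theta(1)$. Localization by a pilot estimate also does not rescue this as stated, because the residual $\Gamma_n(f)-\tilde\Gamma_n(f)$ depends on $f$ itself, not on $f-\hat f_n$.

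What the paper does instead is genuinely different on both sides of the bracket. \textbf{Upper bracket:} rather than adding noise, it observes that the first $n$ coordinates of a size-$m$ periodic process ($m>n$, $m-n\to\infty$) reproduce $\Gamma_n(f)$ up to an HS error of order $(m-n+1)^{1-2\alpha}$ (Lemma~\ref{lem-upp-inf-brack}), which does vanish; hence $\mathcal E_n\precsim\tilde{\mathcal E}_m$. \textbf{Lower bracket:} sample splitting for the stationary series requires deleting $\sim\log n$ observations in the middle so that the two halves become asymptotically independent (the HS norm of the cross-covariance block is of order $r_n^{1-2\alpha}$); then, after localizing to $\|f-f_0\|_\infty+\|f-f_0\|_{2,1/2}\le\varkappa_n$ via a pilot estimator, one applies to the data the parameter-dependent linear map $K_n(f_0)=\tilde\Gamma_n^{1/2}(f_0)\Gamma_n^{-1/2}(f_0)$, which cancels the $O(1)$ part of the mismatch and leaves an HS residual controlled by $\|f-f_0\|_{2,1/2}$ (Lemma~\ref{lem-local-likproc}). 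This correction map, not noise inflation, is the device that makes localization bite. \textbf{Closing the bracket:} the two Gaussian scale models of sizes $n\pm O(\log n)$ are shown equivalent by a gamma-sufficiency argument (Lemmas \ref{lem-helldist-gamma-sameshape}--\ref{lem-helldist-gamma-samescale}), not by a Gaussian white-noise comparison. You would need to supply all three of these ideas; the noise-addition scheme as written does not substitute for any of them.
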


Let $\left\Vert \cdot\right\Vert _{B_{p,q}^{\alpha}}$ be the Besov norm on the
interval $[-\pi,\pi]$ with smoothness index $\alpha$ (see Appendix, Section
\ref{subsec-besov}). For the second main result we impose a smoothness
condition involving this norm for the $\alpha>1/2$ from above and $p=q=6$.

\begin{theorem}
\label{theor-main-2}Let $\Sigma$ be a set of spectral densities as in Theorem
(\ref{theor-main-1}), fulfilling additionally $\left\Vert f\right\Vert
_{B_{6,6}^{\alpha}}\leq M$ for all $f\in\Sigma$. Then the experiments given
respectively by observations
\begin{align*}
& z_{1},\ldots,z_{n}\text{, where }z_{j}\text{ are independent }%
N(0,J_{j,n}\left(  f\right)  )\\
& dZ_{\omega}=\log f(\omega)d\omega+2\pi^{1/2}n^{-1/2}dW_{\omega}\text{,
}\omega\in\lbrack-\pi,\pi]
\end{align*}
with $f\in\Sigma$ are asymptotically equivalent.
\end{theorem}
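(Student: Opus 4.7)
The plan is to invoke the nonparametric asymptotic equivalence of Gaussian scale regression and Gaussian white noise established by Grama and Nussbaum (1998), with $\sigma^{2}(\omega)=f(\omega)$ as the scale. Their theorem treats Gaussian regression with smooth variance bounded away from zero, and produces equivalence to a white noise with drift $\log\sigma^{2}$ whose noise coefficient is calibrated to the Fisher information for the log-variance parameter.

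Step 1 (local averages to design points). I would first replace the bin averages $J_{j,n}(f)$ by the point values $f(\omega_{j})$ at the bin midpoints $\omega_{j}=-\pi+(2j-1)\pi/n$. Squared Hellinger distance between the two product measures is controlled via the closed-form Hellinger formula for centered Gaussians and tensorization by $\sum_{j}(J_{j,n}(f)-f(\omega_{j}))^{2}$. The map $f\mapsto J_{j,n}(f)-f(\omega_{j})$ is a Fourier multiplier with symbol $\mathrm{sinc}(k\pi/n)-1=O((k/n)^{2}\wedge 1)$. Expanding $f$ in its Fourier series and carefully handling aliasing on the $n$-point grid via Parseval, the Sobolev/Besov smoothness ($\alpha>1/2$), together with the $B^{\alpha}_{6,6}$ bound when the plain $W^{\alpha}$ estimate is too weak at the margin, drives the sum to zero uniformly over $\Sigma$.

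Step 2 (scale to white noise). Apply the Grama-Nussbaum equivalence to the reduced model $z_{j}\sim N(0,f(\omega_{j}))$, obtaining equivalence to the target white noise experiment with drift $\log f$. The heart of their argument is variance stabilization: a pilot estimator obtained by sample splitting localizes $\log f$ within a shrinking $L^{p}$-neighbourhood, and a Taylor expansion of the log-likelihood inside the localized parameter space equates the scale experiment to a Gaussian shift experiment up to a third-order remainder. Uniform control of the aggregated cubic remainder over $\Sigma$ is where the Besov index $(p,q)=(6,6)$ enters, since an $L^{6}$ rate on the pilot error combined with the summation over $n$ design points yields exactly the $o(1)$ contribution needed. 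The noise level is pinned at $\eta=2\pi^{1/2}n^{-1/2}$ by Fisher-information matching: one observation $N(0,e^{\theta})$ has information $1/2$ in $\theta=\log\sigma^{2}$, the equispaced design has density $n/(2\pi)$ on $[-\pi,\pi]$, hence the per-frequency informational density is $n/(4\pi)$, which equals $\eta^{-2}$ precisely when $\eta^{2}=4\pi/n$.

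The main obstacle is the variance-stabilization step. The $\chi^{2}_{1}$ law of $z_{j}^{2}/f(\omega_{j})$ puts mass near zero, so a direct $\log$-transformation has unbounded moments; the detour through a localized pilot estimator and Taylor expansion is therefore subtle, and each piece (sample splitting, localization, Taylor remainder, reconstitution) must be quantified uniformly on $\Sigma$ and its cost booked in the Le Cam $\Delta$-distance. Showing that the $L^{6}$ pilot-estimation rate afforded by $B^{\alpha}_{6,6}$ renders the aggregated cubic remainder $o(1)$ uniformly on the nonparametric ball is the quantitative heart of the argument and the reason for the specific Besov exponent in the hypothesis.
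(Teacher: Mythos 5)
The paper itself does not contain a proof of Theorem \ref{theor-main-2}: it states explicitly that the proof appears in the thesis Zhou (2004), and the present article is devoted to Theorem \ref{theor-main-1} only. There is therefore no in-paper proof to compare your proposal against step by step. What can be said is that your sketch agrees with the route the paper indicates. The authors note that the result was proved by Grama and Nussbaum (1998) for the narrower case of a H\"older ball, and that the hypothesis $\left\Vert f\right\Vert_{B_{6,6}^{\alpha}}\leq M$ is deliberately chosen slightly stronger than the $B_{4,4}^{\alpha}$ condition used by Brown, Carter, Low and Zhang (2004) in the density-estimation setting; your diagnosis that the larger Besov exponent is there to control the aggregated cubic Taylor remainder in the variance-stabilization argument, via an $L^{6}$ pilot-estimation rate uniform on $\Sigma$, is consistent with those remarks and with the general structure of the Grama--Nussbaum argument (sample splitting, localization, quadratic approximation of the localized likelihood). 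Your Fisher-information calibration of the noise level is also correct: $I(\theta)=1/2$ for $\theta=\log\sigma^{2}$, design density $n/(2\pi)$ on $[-\pi,\pi]$, hence $\eta^{2}=4\pi/n$ and $\eta=2\pi^{1/2}n^{-1/2}$. The preliminary replacement of the bin averages $J_{j,n}(f)$ by the midpoint values $f(\omega_{j})$ in your Step 1 can be bounded by the same Parseval/aliasing estimates that appear in the paper's Lemma \ref{lem-midpoints-appr}, so that step is unproblematic. Two caveats: this remains a high-level sketch, and the substantial content of the theorem — the uniform quantitative control of the localization, the remainder, and the reconstitution steps over the nonparametric set $\Sigma$ — is exactly what the cited thesis is supposed to furnish; and since the paper does not reproduce that argument, I cannot certify that Zhou (2004) organizes the details in the particular way you propose, only that your proposal matches the approach the paper points to.
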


The proof of this result is in the thesis Zhou (2004). The present paper is
devoted to the proof of Theorem \ref{theor-main-1}.

In nonparametric asymptotic equivalence theory, some constructive results have
recently been obtained, i.e. explicit equivalence maps have been exhibited
which allow to carry over optimal decision function from one sequence of
experiments to the other. Brown and Low (1996) and Brown, Low and Zhang (2002)
obtained constructive results for white noise with drift and Gaussian
regression with nonrandom and random design. Brown, Carter, Low and Zhang
(2004) found such equivalence maps (Markov kernels) for the i.i.d. model on
the unit interval (density estimation) and the model of Gaussian white noise
with drift; cf. also Carter (2002). The theoretical (nonconstructive) variant
of this result had earlier been established in Nussbaum (1996), in the sense
of an existence proof for pertaining Markov kernels. This indirect approach
relied on the well known connection to likelihood processes of experiments,
cf. Le Cam and Yang (2000). In the present paper, the result of Theorem
\ref{theor-main-1} are of nonconstructive type, using a variety of methods for
bounding the $\Delta$-distance between the time series experiment and the
model of independent zero mean Gaussians. Similarly, the proof of Theorem
\ref{theor-main-2} in Zhou (2004) is nonconstructive, but it appears likely in
that a second step, relatively simple "workable" equivalence maps can be
found, at least for the case of Theorem \ref{theor-main-1} related to the
classical result about asymptotic independence of discrete Fourier transforms.

To further discuss the context of the main results, we note the following points.

\textbf{1.}\textit{\ Asymptotic independence of discrete Fourier transforms. }
Let%
\[
d_{n}(\omega)=\sum_{k=1}^{n}\exp\left(  -\mathbf{i}k\omega\right)
y(k),\omega\in(-\pi,\pi)
\]
be the discrete Fourier transform of the time series $y(1),\ldots,y(n)$.
Assume $n$ is uneven and let $\eta_{j}$ be complex standard normal variables.
It is well known that for the Fourier frequencies $\omega_{j}=2\pi j/n$,
$j=1,\ldots,(n-1)/2$ in $(0,\pi)$, there is an asymptotic distribution
\[
\left(  \pi n\right)  ^{-1/2}d_{n}(\omega_{j})\approx\exp(\mathbf{i}\omega
_{j})f^{1/2}(\omega_{j})\eta_{j}%
\]
and the values are asymptotically uncorrelated for distinct $\omega_{j}$,
$\omega_{k}$. For a precise formulation cf. relation
(\ref{cov-matrix-approx-elem}) below or Brockwell and Davis (1991),
Proposition 4.5.2. This fact is the basis for many inference methods (e.g.
Dahlhaus and Janas (1996)); see Lahiri (2003) for an extended discussion of
the asymptotic independence. A linear transformation to $n-1$ independent real
normals and adding a real normal according to $(2\pi n)^{-1/2}d_{n}(0)\approx
N(0,f(0))$ suggests the Gaussian scale model (\ref{Gauss-scale-model}).

\textbf{2.} \textit{Log-periodogram regression}. Consider also the
periodogram
\[
I_{n}(\omega)=\frac{1}{2\pi n}\left\vert d_{n}(\omega)\right\vert ^{2}.
\]
Note the equality in distribution $\left\vert \eta_{j}\right\vert ^{2}\sim
\chi_{2}^{2}\sim2e_{j}$, where $e_{j}$ is standard exponential. As a
consequence of the above result about $d_{n}(\omega_{j})$, we have for
$j=1,\ldots,(n-1)/2$
\begin{equation}
I_{n}(\omega_{j})\approx f(\omega_{j})e_{j}\label{periodogram-repres}%
\end{equation}
with asymptotic independence. Assuming this model exact and taking a logarithm
gives rise to the inference method of \textit{\ }log-periodogram regression
(for an account cf. Fan and Gijbels (1996), sec. 6.4)

\textbf{3. }\textit{The Whittle approximation. }This is\textit{\ }an
approximation to $-n^{-1}$ times the log-likelihood of the time series
$y(1),\ldots,y(n)$. In a parametric model $f_{\vartheta}$, $\vartheta\in
\Theta$, with multivariate normal law $N_{n}\left(  0,\Gamma_{n}(f_{\vartheta
})\right)  $, computation of the MLE involves inverting the covariance matrix
$\Gamma_{n}(f_{\vartheta})$, which is difficult since both eigenvectors and
eigenvalues depend on $\vartheta$ in general. Replacing $\Gamma_{n}%
^{-1}(f_{\vartheta})$ by $\Gamma_{n}(1/4\pi^{2}f_{\vartheta})$ and using an
approximation to $n^{-1}\log\Gamma_{n}(f_{\vartheta})$ leads to an expression
$L^{W}(f)+\log2\pi$ where
\begin{equation}
L^{W}(f)=\frac{1}{4\pi}\int_{-\pi}^{\pi}\left(  \log f_{\vartheta}%
(\omega)+\frac{I_{n}(\omega)}{f_{\vartheta}(\omega)}\right)  d\omega
\label{whittle-func}%
\end{equation}
is the Whittle likelihood (cf. Dahlhaus (1988) for a brief exposition and
references). A closely related expression is obtained by assuming the model
(\ref{periodogram-repres}) exact: then $-n^{-1}$ times the log-likelihood is
\[
L_{n}^{W}(f)=n^{-1}%
{\displaystyle\sum\limits_{j=1}^{(n-1)/2}}
\left(  \log f_{\vartheta}(\omega_{j})+\frac{I_{n}(\omega_{j})}{f_{\vartheta
}(\omega_{j})}\right)
\]
i. e. a discrete approximation to (\ref{whittle-func}). For applications of
the Whittle likelihood to nonparametric inference cf. Dahlhaus and Polonik (2002).

\textbf{4. }\textit{Asymptotics for} $L^{W}(f)$. The accuracy of the Whittle
approximation has been described as follows (Coursol and Dacunha-Castelle
(1982), Dzhaparidze (1986), Theorem 1, p. 52) . Let $L_{n}(f)$ be the
log-likelihood in the experiment (\ref{exper-1}); then
\begin{equation}
L_{n}(f)=-nL^{W}(f)-n\log2\pi+O_{P}(1)\label{whittle-approx-accuracy}%
\end{equation}
uniformly over $f\in$ $\Sigma_{1/2,M}$. This justifies use of $L^{W}(f)$ as a
contrast function, e.g. it yields asymptotic efficiency of the Whittle MLE in
parametric models (Dzhaparidze (1986), Chap. II), but falls short of providing
asymptotic equivalence in the Le Cam sense. Indeed if
(\ref{whittle-approx-accuracy}) were true with $o_{P}(1)$ in place of
$O_{P}(1)$ and with $L^{W}(f)$ replaced by $L_{n}^{W}(f)$ then this would
already imply total variation equivalence, up to an orthogonal transform, of
the exact model (\ref{periodogram-repres}) with $f\in$ $\Sigma_{1/2,M}$ (via
the Scheffe lemma argument of Delattre and Hoffmann (2002)). In section 2
below (cf. relation (\ref{fails})) we note a corresponding negative result,
essentially that this total variation approximation over $f\in$ $\Sigma
_{1/2,M}$ does not take place.

\textbf{5.} \textit{Conditions for Theorem \ref{theor-main-2}}. For a narrower
parameter space, i. e. a H\"{o}lder ball with smoothness index $\alpha>1/2,$
the result of Theorem \ref{theor-main-2} has been proved by Grama and Nussbaum
(1998). Note that the Sobolev balls $W^{\alpha}(M)$ figuring in Theorem
\ref{theor-main-1} are natural parameter sets of spectral densities since the
smoothness condition is directly stated in terms of the autocovariance
function $\gamma_{f}(\cdot)$. The Besov balls $B_{p,p}^{\alpha}(M)$ given in
terms of the norm $\left\Vert \cdot\right\Vert _{B_{p,p}^{\alpha}}$are
intermediate between $L_{2}$-Sobolev and H\"{o}lder balls. For the white noise
approximation of the i.i.d. (density estimation) model, Brown, Carter, Low and
Zhang (2004) succeeded in weakening the H\"{o}lder ball condition in Nussbaum
(1996) to a condition that $\Sigma$ is compact both in the Besov spaces
$B_{2,2}^{1/2}$ and $B_{4,4}^{1/2}$ on the unit interval. This is immediately
implied by $\Sigma\subset B_{4,4}^{\alpha}(M)$ for some $\alpha>1/2$. Our
condition for Theorem \ref{theor-main-2} is slightly stronger, i.e.
$\Sigma\subset B_{6,6}^{\alpha}(M)$ for some $\alpha>1/2$. In Remark
\ref{rem-periodic-besov} (Appendix) we note a sufficient condition in terms of
the autocovariance function $\gamma_{f}(\cdot)$, i.e. give a description of
the periodic version of the Besov ball.

Throughout this paper we adopt the notation that $C$ represents a constant
independent of $n$ and the parameter (spectral density) $f\in\Sigma$, and the
value of which may change at each occurrence, even on the same line.

\paragraph{\textbf{Relations between experiments. }}

All measurable sample spaces are assumed to be Polish (complete separable)
metric spaces equipped with their Borel sigma algebra. For measures $P,$ $Q$
on the same sample space, let $\left\Vert P-Q\right\Vert _{TV}$ be the total
variation distance. For the general case where $P,$ $Q$ are not necessarily on
the same sample space, suppose $K$ is a Markov kernel such that $KP$ is a
measure on the same sample space as $Q$. In that case, $\left\Vert
Q-KP\right\Vert _{TV}$ is defined and will be used as generic notation for a
Markov kernel $K$.

Consider now experiments (families of measures) $\mathcal{F}=\left(
Q_{f},\;f\in\Sigma\right)  $ and $\mathcal{E}=\left(  P_{f},\;f\in
\Sigma\right)  $, with the same parameter space $\Sigma$. All experiments here
are assumed dominated by a sigma-finite measure on their respective sample
space. If $\mathcal{E}$ and $\mathcal{F}$ are on the same sample space, define
their total variation distance
\[
\Delta_{0}\left(  \mathcal{E},\mathcal{F}\right)  =\sup_{f\in\Sigma}\left\Vert
Q_{f}-P_{f}\right\Vert _{TV}.
\]
In the general case, the deficiency of $\mathcal{E}$ with respect to
$\mathcal{F}$ is defined as
\[
\delta\left(  \mathcal{E},\mathcal{F}\right)  =\inf_{K}\sup_{f\in\Sigma
}\left\Vert Q_{f}-KP_{f}\right\Vert _{TV}%
\]
where $\inf$ extends over all appropriate Markov kernels. Le Cam's
pseudodistance $\Delta\left(  \mathcal{\cdot},\cdot\right)  $ between
$\mathcal{E}$ and $\mathcal{F}$ then is
\[
\Delta\left(  \mathcal{E},\mathcal{F}\right)  =\max\left(  \delta\left(
\mathcal{E},\mathcal{F}\right)  ,\delta\left(  \mathcal{F},\mathcal{E}\right)
\right)  .
\]
Furthermore, we will use the following notation involving experiments
$\mathcal{E},\mathcal{F}$ or sequences of such $\mathcal{E}_{n}=\left(
P_{n,f},\;f\in\Sigma\right)  $ and $\mathcal{F}_{n}=\left(  Q_{n,f}%
,\;f\in\Sigma\right)  $.

\textbf{Notation.}%

\begin{tabular}
[t]{lllll}%
$\mathcal{E}$ & $\mathbf{\preceq}$ & $\mathcal{F}$ & ($\mathcal{F}$ more
informative than $\mathcal{E}$): & $\delta\left(  \mathcal{F},\mathcal{E}%
\right)  =0$\\
$\mathcal{E}$ & $\mathcal{\sim}$ & $\mathcal{F}$ & (equivalent): &
$\Delta\left(  \mathcal{E},\mathcal{F}\right)  =0$\\
$\mathcal{E}_{n}$ & $\simeq$ & $\mathcal{F}_{n}$ & (asymptotically total
variation equivalent): & $\Delta_{0}\left(  \mathcal{F}_{n},\mathcal{E}%
_{n}\right)  \rightarrow0$\\
$\mathcal{E}_{n}$ & $\precsim$ & $\mathcal{F}_{n}$ & ($\mathcal{F}_{n}$
asymptotically more informative than $\mathcal{E}_{n}$): & $\delta\left(
\mathcal{F}_{n},\mathcal{E}_{n}\right)  \rightarrow0$\\
$\mathcal{E}_{n}$ & $\approx$ & $\mathcal{F}_{n}$ & (asymptotically
equivalent): & $\Delta\left(  \mathcal{F}_{n},\mathcal{E}_{n}\right)
\rightarrow0$%
\end{tabular}

Note that "more informative" above is used in the sense of a semi-ordering,
i.e. its actual meaning is "at least as informative". We shall also write the
relation $\simeq$ in a less formal way between data vectors such as
$x^{(n)}\simeq y^{(n)}$, if it is clear from the context which experiments the
data vectors represent.

\section{The periodic Gaussian experiment}

From now on we shall assume that $n$ is uneven. Our argument for asymptotic
equivalence is such that it easily allows extension to the case of general
sequences $n\rightarrow\infty$ (cf. Remark \ref{rem-even} for details).

Recall that the covariance matrix $\Gamma_{n}=\Gamma_{n}(f)$ has the Toeplitz
form $(\Gamma_{n})_{j,k}=\gamma(k-j)$, $j,k=1,\ldots,n,$ i.e.
\[
\Gamma_{n}=\left(
\begin{array}
[c]{ccccc}%
\gamma(0) & \gamma(1) & \ldots & \gamma(n-2) & \gamma(n-1)\\
\gamma(1) & \gamma(0) & \ldots & \ldots & \gamma(n-2)\\
\ldots & \ldots & \ldots & \ldots & \ldots\\
\gamma(n-2) & \ldots & \ldots & \gamma(0) & \gamma(1)\\
\gamma(n-1) & \gamma(n-2) & \ldots & \gamma(1) & \gamma(0)
\end{array}
\right)  .
\]
Following Brockwell and Davis (1991), \S \ 4.5 we shall define a circulant
matrix approximation by
\[
\tilde{\Gamma}_{n}=\left(
\begin{array}
[c]{ccccc}%
\gamma(0) & \gamma(1) & \ldots & \gamma(2) & \gamma(1)\\
\gamma(1) & \gamma(0) & \ldots & \ldots & \gamma(2)\\
\ldots & \ldots & \ldots & \ldots & \ldots\\
\gamma(2) & \ldots & \ldots & \gamma(0) & \gamma(1)\\
\gamma(1) & \gamma(2) & \ldots & \gamma(1) & \gamma(0)
\end{array}
\right)
\]
where in the first row, the central element and the one following it coincide
with $\gamma((n-1)/2)$. More precisely, for given uneven $n$ define a function
on integers $h$ with $\left\vert h\right\vert <n$
\[
\tilde{\gamma}_{(n),f}(h)=\left\{
\begin{array}
[c]{c}%
\gamma_{f}(h)\text{, }\left\vert h\right\vert \leq(n-1)/2\\
\gamma_{f}(n-\left\vert h\right\vert )\text{, }(n+1)/2\leq\left\vert
h\right\vert \leq n-1
\end{array}
\right.
\]
and set
\begin{equation}
(\tilde{\Gamma}_{n})_{j,k}(f)=\tilde{\gamma}_{(n),f}(k-j),j,k=1,\ldots
,n.\label{gamma-n-tilde-def}%
\end{equation}
We shall also write $\tilde{\Gamma}_{n}(f)$ for the corresponding $n\times n$
matrix, or simply $\tilde{\Gamma}_{n}$ and $\tilde{\gamma}_{(n)}(h)$ if the
dependence on $f$ is understood. Define
\begin{equation}
\omega_{j}=\frac{2\pi j}{n}\text{, }\left\vert j\right\vert \leq
(n-1)/2.\label{omega-j-def}%
\end{equation}
It is well known (see Brockwell and Davis (1991), relation 4.5.5) that the
spectral decomposition of $\tilde{\Gamma}_{n}$ can be described as follows. We
have
\begin{equation}
\tilde{\Gamma}_{n}=\sum_{\left\vert j\right\vert \leq(n-1)/2}\lambda
_{j}\mathbf{u}_{j}\mathbf{u}_{j}^{\prime}\label{spec-decomp-gamma-tilde}%
\end{equation}
where $\lambda_{j}$ are real eigenvalues and $\mathbf{u}_{j}$ are real
orthonormal eigenvectors. The eigenvalues are
\[
\lambda_{j}=\sum_{\left\vert k\right\vert \leq(n-1)/2}\gamma(k)\exp
(-\mathrm{i}\omega_{j}k),\;\left\vert j\right\vert \leq(n-1)/2.
\]
Note that $\lambda_{j}=$ $\lambda_{-j}$, $j\neq0$ and that the $\lambda_{j}$
are approximate values of $2\pi f$ in the points $\omega_{j}.$ Indeed define
\begin{equation}
\tilde{f}_{n}(\omega)=\frac{1}{2\pi}\sum_{\left\vert k\right\vert \leq
(n-1)/2}\gamma(k)\mathrm{\exp}{\normalsize (\mathrm{i}k\omega),\;\omega
\in\lbrack-\pi,\pi]}\label{f-n-bar}%
\end{equation}
a truncated Fourier series approximation to $f$; then $\tilde{f}_{n}$ is an
even function on $[-\pi,\pi]$ and
\begin{equation}
\lambda_{j}=2\pi\tilde{f}_{n}(\omega_{j}),\;\left\vert j\right\vert
\leq(n-1)/2.\label{ev-as-funcvalues}%
\end{equation}
The eigenvectors are
\begin{align}
\;\mathbf{u}_{0}^{\prime}  & =n^{-1/2}\left(  1,\ldots,1\right)
,\label{ev-1}\\
\mathbf{u}_{j}^{\prime}  & =(2/n)^{1/2}\left(  1,\cos(\omega_{j}),\cos
(2\omega_{j})\ldots,\cos((n-1)\omega_{j})\right)  ,\;\label{ev-2}\\
\mathbf{u}_{-j}^{\prime}  & =(2/n)^{1/2}\left(  0,\sin(\omega_{j}%
),\sin(2\omega_{j})\ldots,\sin((n-1)\omega_{j})\right)  ,\;j=1,\ldots
,(n-1)/2.\label{ev-3}%
\end{align}
In our setting, the circulant matrix $\tilde{\Gamma}_{n}$ is positive definite
for $n$ large enough. Indeed, Lemma \ref{lem-sobol-embed} Appendix implies
that $\tilde{f}_{n}\geq M^{-1}/2$ uniformly over $f\in\Sigma$, for $n$ large
enough, so that $\tilde{\Gamma}_{n}(f)$ is a covariance matrix. Define the
experiment, in analogy to (\ref{exper-1}),
\begin{equation}
\mathcal{\tilde{E}}_{n}=\left(  N_{n}(0,\tilde{\Gamma}_{n}(f)),f\in
\Sigma\right)  \label{periodic-exper}%
\end{equation}
with data $\tilde{y}^{(n)}$, say. The sequence $\tilde{y}^{(n)}$ may be called
a \textquotedblright periodic process\textquotedblright\ since it can be
represented in terms of independent standard Gaussians $\xi_{j}$, as a finite
sum
\begin{equation}
\tilde{y}^{(n)}=\sum_{\left\vert j\right\vert \leq(n-1)/2}\lambda_{j}%
^{1/2}\mathbf{u}_{j}\xi_{j}\label{specrep-periodic}%
\end{equation}
where the vector $\mathbf{u}_{j}$ describes a deterministic oscillation (cp.
(\ref{ev-1})-(\ref{ev-3})). Accordingly $\mathcal{\tilde{E}}_{n}$ will be
called a \textit{periodic Gaussian experiment}.

The periodic process $\tilde{y}^{(n)}$ is known to approximate the original
time series $y^{(n)}$ in the following sense. Define the $n\times n$-matrix
\begin{equation}
U_{n}=\left(  \mathbf{u}_{-(n-1)/2,}\ldots,\mathbf{u}_{(n-1)/2}\right)
\label{orthog-transform-equiv-map}%
\end{equation}
and consider the transforms
\[
z^{(n)}=(2\pi)^{-1/2}U_{n}^{\prime}y^{(n)},\;\tilde{z}^{(n)}=(2\pi
)^{-1/2}U_{n}^{\prime}\tilde{y}^{(n)}.
\]
Denote $\mathrm{Cov}(z^{(n)})$ the covariance matrix of the random vector
$z^{(n)}$. Then we have (Brockwell and Davis (1991), Proposition 4.5.2), for
given $f\in\Sigma$
\begin{equation}
\sup_{1\leq i,j\leq n}\left\vert \mathrm{Cov}(z^{(n)})_{i,j}-\mathrm{Cov}%
(\tilde{z}^{(n)})_{i,j}\right\vert \rightarrow0\text{ as }n\rightarrow
\infty\text{. }\label{cov-matrix-approx-elem}%
\end{equation}
Since $\mathrm{Cov}(\tilde{z}^{(n)})$ is diagonal with diagonal elements
$\lambda_{j}/2\pi$, this means that the elements of $z^{(n)}$ are
approximately uncorrelated for large $n$.

Note that $\tilde{z}^{(n)}$ can also be written, in accordance with
(\ref{specrep-periodic}) and (\ref{ev-as-funcvalues})
\begin{equation}
\tilde{z}^{(n)}=\left(  \tilde{f}_{n}^{1/2}(\omega_{j})\xi_{j}\right)
_{\left\vert j\right\vert \leq(n-1)/2}\label{n-vec-of-ind}%
\end{equation}
which is nearly identical with the Gaussian scale model
(\ref{Gauss-scale-model}). Thus the question appears whether the approximation
(\ref{cov-matrix-approx-elem}) can be strengthened to a total variation
approximation of the respective laws $\mathcal{L}\left(  z^{(n)}|f\right)  $
and $\mathcal{L}\left(  \tilde{z}^{(n)}|f\right)  $.

The answer to that is negative; let us introduce some notation. For $n\times
n$ matrices $A=(a_{jk})$ define the Euclidean norm $\left\|  A\right\|  $ by
\[
\left\|  A\right\|  ^{2}:=\mathrm{tr}\left[  A^{\prime}A\right]  =\sum
_{j=1}^{n}\sum_{k=1}^{n}a_{jk}^{2}.
\]
If $A$ is symmetric, we denote the largest and smallest eigenvalues by
$\lambda_{\max}(A)$, $\lambda_{\min}(A)$. For later use, we also define the
operator norm of (not necessarily symmetric) $A$ by
\[
\left|  A\right|  :=\left(  \lambda_{\max}(A^{\prime}A)\right)  ^{1/2}.
\]
If A is symmetric nonnegative definite then $\left|  A\right|  =\lambda_{\max
}(A)$. The following lemma shows that the Hellinger distance between the laws
of $y^{(n)}$ and $\tilde{y}^{(n)}$ depends crucially on the total Euclidean
distance $\left\|  \Gamma_{n}(f)-\tilde{\Gamma}_{n}(f)\right\|  $ between the
covariance matrices, so that an elementwise convergence as in
(\ref{cov-matrix-approx-elem}) is not enough.

\begin{lemma}
\label{lem-helldist-covmatr} Let $A,B$ be $n\times n$ covariance matrices and
suppose that for some $M>1$
\[
0<M^{-1}\leq\lambda_{\min}(A)\;\text{and }\lambda_{\max}(A)\leq M.
\]
Then there exist $\epsilon=\epsilon_{M}>0$ and $K=K_{M}>1$ not depending on
$A,B$ and $n$ such that $\left\Vert A-B\right\Vert \leq\epsilon$ implies
\[
K^{-1}\;\left\Vert A-B\right\Vert ^{2}\leq H^{2}\left(  N_{n}(0,A),N_{n}%
(0,B)\right)  \leq K\;\left\Vert A-B\right\Vert ^{2}.
\]
where $H(\cdot,\cdot)$ is the Hellinger distance.
\end{lemma}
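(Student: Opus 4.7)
The plan is to reduce everything to a sum of scalar expressions via simultaneous diagonalization, apply a Taylor expansion, and use the Frobenius norm's compatibility with such sums so that no dimension-dependent constant appears.

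First I would invoke the classical explicit formula for the Hellinger affinity of two centered normals,
\[
\rho(A,B):=\int\sqrt{dN_n(0,A)\,dN_n(0,B)}=\frac{(\det A\,\det B)^{1/4}}{\det\left(\tfrac{A+B}{2}\right)^{1/2}},
\]
so that $H^2(N_n(0,A),N_n(0,B))=2(1-\rho(A,B))$. Set $Y=A^{-1/2}(B-A)A^{-1/2}$; then $A^{-1/2}BA^{-1/2}=I+Y$ and $A^{-1/2}\bigl(\tfrac{A+B}{2}\bigr)A^{-1/2}=I+Y/2$, so
\[
-2\log\rho(A,B)=\sum_{j=1}^n h(\mu_j),\qquad h(\mu):=\log(1+\mu/2)-\tfrac12\log(1+\mu),
\]
where $\mu_1,\dots,\mu_n$ are the eigenvalues of the symmetric matrix $Y$.

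Next I would control the eigenvalues $\mu_j$ using the hypothesis on $A$. Since $|Y|\le\lambda_{\min}(A)^{-1}|B-A|\le M\,\|B-A\|$ (operator norm dominated by Frobenius norm), choosing $\epsilon=\epsilon_M:=1/(2M)$ forces $|\mu_j|\le 1/2$ for all $j$ whenever $\|A-B\|\le\epsilon$. One checks that $h(0)=h'(0)=0$ and $h''(0)=1/4$; hence on $[-1/2,1/2]$ there exist absolute constants $c_1,c_2>0$ with $c_1\mu^2\le h(\mu)\le c_2\mu^2$. Summing gives
\[
c_1\,\|Y\|^2\ \le\ -2\log\rho(A,B)\ \le\ c_2\,\|Y\|^2.
\]
Diagonalizing $A$ and writing $B-A=A^{1/2}YA^{1/2}$ componentwise yields $\|B-A\|^2=\sum_{i,j}\alpha_i\alpha_j y_{ij}^2$ in the eigenbasis of $A$, so the spectral bounds on $A$ give
\[
M^{-2}\,\|B-A\|^2\ \le\ \|Y\|^2\ \le\ M^{2}\,\|B-A\|^2.
\]

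Finally I would pass from $-\log\rho$ back to $H^2$. Shrinking $\epsilon$ if needed so that the above inequalities force $-\log\rho\le\log 2$ (i.e.\ $\rho\ge 1/2$), the elementary inequality $\tfrac12(-\log\rho)\le 1-\rho\le -\log\rho$ on $[1/2,1]$ gives $H^2=2(1-\rho)\asymp -2\log\rho$, and combining everything produces the desired sandwich with $K=K_M$ depending only on $M$.

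The main point to watch is that all constants remain independent of $n$: this is secured by the fact that both sides of the inequality are naturally expressed through the Frobenius norm (a sum over eigenvalues of $Y$), so the Taylor bound applies termwise and sums without introducing a dimensional factor. The condition $\|A-B\|\le\epsilon_M$ is exactly what is needed to keep every eigenvalue of $Y$ in the region where $h(\mu)\asymp\mu^2$ is valid.
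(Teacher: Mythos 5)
Your proof is correct and follows essentially the same route as the paper's: both simultaneously diagonalize $A$ and $B$ (your $Y=A^{-1/2}(B-A)A^{-1/2}$ plays the role of the paper's $\tilde\Lambda-I_n$, and your bound $M^{-2}\|A-B\|^2\le\|Y\|^2\le M^2\|A-B\|^2$ matches the paper's key inequalities), and both reduce to a one-dimensional Taylor expansion of the log-affinity around $\mu=0$. Your presentation via the matrix determinant formula and the explicit function $h(\mu)=\log(1+\mu/2)-\tfrac12\log(1+\mu)$ is a slightly cleaner way of writing what the paper does by multiplying scalar Hellinger affinities $A_H(N(0,1),N(0,\tilde\lambda_i))$, but the substance is identical.
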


The proof is in section \ref{sec-proofs}. To apply this lemma, set
$A=\Gamma_{n}(f)$, $B=\tilde{\Gamma}_{n}(f)$ and note that, since $f\in\Sigma$
is bounded and bounded away from $0$ (both uniformly over $f\in\Sigma$), the
condition on the eigenvalues of $\Gamma_{n}(f)$ is fulfilled, also uniformly
over $f\in\Sigma$ (Brockwell and Davis (1991), Proposition 4.5.3). We shall
see that the expression $\left\Vert \Gamma_{n}(f)-\tilde{\Gamma}%
_{n}(f)\right\Vert ^{2}$ is closely related to a Sobolev type seminorm for
smoothness index $1/2$. For any $f\in L_{2}(-\pi,\pi)$ given by
(\ref{first-equa}) set
\begin{equation}
\left\vert f\right\vert _{2,\alpha}^{2}:=\sum_{k=-\infty}^{\infty}\left\vert
k\right\vert ^{2\alpha}\gamma_{f}^{2}(k),\;\left\Vert f\right\Vert _{2,\alpha
}^{2}:=\gamma_{f}^{2}(0)+\left\vert f\right\vert _{2,\alpha}^{2}%
\label{normdef}%
\end{equation}
provided the right side is finite; the Sobolev ball $W^{\alpha}(M)$ given by
(\ref{sobol-ball}) is then described by $\left\Vert f\right\Vert _{2,\alpha
}^{2}\leq M$. Also, for any natural $m$ define a finite dimensional linear
subspace of $L_{2}(-\pi,\pi)$
\[
L_{m}=\left\{  f\in L_{2}(-\pi,\pi):\int f(\omega)\mathrm{\exp}%
{\normalsize (\mathrm{i}k\omega)d\omega}=0\text{, }\left\vert k\right\vert
>m\right\}  .
\]

\begin{lemma}
\label{lem-covmatnorm-sobol}\textbf{(i)} For any $f\in\Sigma$ we have
\begin{equation}
\left\Vert \Gamma_{n}(f)-\tilde{\Gamma}_{n}(f)\right\Vert ^{2}\leq2\left\vert
f\right\vert _{2,1/2}^{2}\label{upperbound-periodic-orig}%
\end{equation}
and for $f\in\Sigma\cap L_{(n-1)/2}$
\[
\left\vert f\right\vert _{2,1/2}^{2}=\left\Vert \Gamma_{n}(f)-\tilde{\Gamma
}_{n}(f)\right\Vert ^{2}.
\]
\textbf{(ii)} For any $f,f_{0}\in\Sigma$ we have%
\begin{equation}
\left\Vert \Gamma_{n}(f)-\Gamma_{n}(f_{0})-\left(  \tilde{\Gamma}%
_{n}(f)-\tilde{\Gamma}_{n}(f_{0})\right)  \right\Vert ^{2}\leq2\left\vert
f-f_{0}\right\vert _{2,1/2}^{2}.\label{upperbound-periodic-local}%
\end{equation}

\end{lemma}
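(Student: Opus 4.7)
The plan is to compute $\|\Gamma_n(f)-\tilde\Gamma_n(f)\|^2$ explicitly, diagonal by diagonal, and then match it against the Sobolev seminorm $|f|_{2,1/2}^2=\sum_k|k|\gamma_f^2(k)=2\sum_{k\ge1}k\gamma^2(k)$ (using evenness of $\gamma$).

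First I would read off from the definitions that the $(j,k)$ entry of $\Gamma_n-\tilde\Gamma_n$ vanishes whenever $|k-j|\le(n-1)/2$ and otherwise equals $\gamma(k-j)-\gamma(n-|k-j|)$. Since for each integer $h$ with $|h|\le n-1$ the number of index pairs $(j,k)\in\{1,\dots,n\}^2$ with $k-j=h$ equals $n-|h|$, the squared Euclidean norm becomes a single-index sum
\[
\|\Gamma_n(f)-\tilde\Gamma_n(f)\|^2
=\sum_{(n+1)/2\le|h|\le n-1}(n-|h|)\bigl[\gamma(h)-\gamma(n-|h|)\bigr]^2.
\]
Exploiting $\gamma(-h)=\gamma(h)$ and substituting $m=n-|h|$ (so $m$ runs from $1$ to $(n-1)/2$) collapses this to
\[
\|\Gamma_n(f)-\tilde\Gamma_n(f)\|^2
=2\sum_{m=1}^{(n-1)/2} m\,\bigl[\gamma(m)-\gamma(n-m)\bigr]^2.
\]

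For the identity in part (i), I would use that $f\in L_{(n-1)/2}$ forces $\gamma(n-m)=0$ for $1\le m\le(n-1)/2$, so the last display reduces to $2\sum_{m=1}^{(n-1)/2}m\gamma^2(m)$, which coincides with $|f|_{2,1/2}^2$ because $\gamma$ also vanishes outside $[-(n-1)/2,(n-1)/2]$. For the upper bound, I would apply $(a-b)^2\le 2a^2+2b^2$ and then perform the index change $k=n-m$ in the cross term:
\[
\|\Gamma_n(f)-\tilde\Gamma_n(f)\|^2
\le 4\sum_{m=1}^{(n-1)/2}m\gamma^2(m)+4\sum_{k=(n+1)/2}^{n-1}(n-k)\gamma^2(k).
\]
The small observation that $n-k\le k$ whenever $k\ge(n+1)/2$ lets me merge the two sums into $4\sum_{m=1}^{n-1}m\gamma^2(m)\le 4\sum_{m\ge1}m\gamma^2(m)=2|f|_{2,1/2}^2$, which is the asserted bound.

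Part (ii) would be deduced from part (i) together with the linearity of the maps $f\mapsto\gamma_f$, $f\mapsto\Gamma_n(f)$ and $f\mapsto\tilde\Gamma_n(f)$. Writing $g=f-f_0$ (an even $L_2$-function, though not necessarily in $\Sigma$), these maps commute with subtraction, so the left-hand side of \eqref{upperbound-periodic-local} equals $\|\Gamma_n(g)-\tilde\Gamma_n(g)\|^2$; the derivation of \eqref{upperbound-periodic-orig} used only the existence of $\gamma_g(k)$ for $|k|\le n-1$ and the evenness of $g$, so it applies verbatim and yields the bound $2|g|_{2,1/2}^2=2|f-f_0|_{2,1/2}^2$.

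There is no serious obstacle here; the only delicate point is constant-tracking. A careless use of $(a-b)^2\le 2(a^2+b^2)$ naively produces a factor of $4$ rather than $2$, and the bound is saved precisely by the combinatorial trick $(n-k)\le k$ for $k\ge(n+1)/2$, which lets the two pieces recombine into a single sum over all nonnegative lags and thereby absorb one factor of $2$ back into the definition $|f|_{2,1/2}^2=2\sum_{m\ge1}m\gamma^2(m)$.
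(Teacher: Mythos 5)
Your proof is correct and follows essentially the same route as the paper: the diagonal-by-diagonal reduction of $\left\Vert\Gamma_n-\tilde\Gamma_n\right\Vert^2$ to $2\sum_{m=1}^{(n-1)/2}m\left(\gamma(m)-\gamma(n-m)\right)^2$, the Cauchy--Schwarz bound $(a-b)^2\le 2a^2+2b^2$, the re-indexing $k=n-m$ together with $n-k\le k$ for $k\ge(n+1)/2$ to merge the two sums, and linearity to deduce (ii). You merely make explicit the index change and the $n-k\le k$ observation, which the paper compresses into a single inequality.
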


\begin{proof}
\textbf{\ (i) }From the definition of $\Gamma_{n}(f)$ and $\tilde{\Gamma}%
_{n}(f)$ in terms of $\gamma(\cdot),\tilde{\gamma}_{(n)}(\cdot)$ we
immediately obtain
\[
\left\Vert \Gamma_{n}(f)-\tilde{\Gamma}_{n}(f)\right\Vert ^{2}=\sum
_{\left\vert k\right\vert \leq n-1}(n-\left\vert k\right\vert )\left(
\gamma(k)-\tilde{\gamma}_{(n)}(k)\right)  ^{2}%
\]%
\begin{align}
& =\sum_{\left\vert k\right\vert =(n+1)/2}^{n-1}(n-\left\vert k\right\vert
)\left(  \gamma(k)-\gamma(n-|k|)\right)  ^{2}=2\sum_{k=1}^{(n-1)/2}k\left(
\gamma(k)-\gamma(n-k)\right)  ^{2}\label{covnorm-expr-1}\\
& \leq2\sum_{k=1}^{(n-1)/2}2k\left(  \gamma^{2}(k)+\gamma^{2}(n-k)\right)
\leq4\sum_{k=1}^{n-1}k\gamma^{2}(k)\leq2\left\vert f\right\vert _{2,1/2}%
^{2}.\nonumber
\end{align}
The first inequality is proved. The second one follows immediately from
(\ref{covnorm-expr-1}).

\textbf{(ii)} Note that for any $n$, the mapping $f\rightarrow\Gamma_{n}(f) $
if it is defined by (\ref{gamma-n-def}) for any $f\in L_{2}(-\pi,\pi)$ is
linear, and the same is true for $f\rightarrow\tilde{\Gamma}_{n}(f)$ defined
by (\ref{gamma-n-tilde-def}). Hence
\[
\Gamma_{n}(f)-\Gamma_{n}(f_{0})=\Gamma_{n}(f-f_{0}),\;\tilde{\Gamma}%
_{n}(f)-\tilde{\Gamma}_{n}(f_{0})=\tilde{\Gamma}_{n}(f-f_{0}).
\]
Now the argument is completely analogous to (i) if $\gamma(k)=\gamma_{f}(k) $
is replaced by $\gamma_{f-f_{0}}(k)$.
\end{proof}

Our assumption $f\in\Sigma,$ i.e. $\left\Vert f\right\Vert _{2,\alpha}^{2}\leq
M$ for some $\alpha>1/2$ provides an upper bound $M$ for $\left\vert
f\right\vert _{2,1/2}^{2}$ but does guarantee that this term is uniformly
small. Thus we are not able to utilize Lemma \ref{lem-helldist-covmatr} to
approximate $\mathcal{E}_{n}$ by $\mathcal{\tilde{E}}_{n}$ in Hellinger
distance. In fact this Hellinger distance approximation does not take place:
take a fixed $m$, select $f\in\Sigma\cap L_{m}$ such that $\left\Vert
f\right\Vert _{2,1/2}^{2}<\epsilon$ with $\epsilon$ from Lemma
\ref{lem-helldist-covmatr} and use the lower bound in this lemma to show that
\begin{equation}
H^{2}\left(  N_{n}(0,\Gamma_{n}(f)),N_{n}(0,\tilde{\Gamma}_{n}(f))\right)
\geq K^{-1}\epsilon^{2}\label{fails}%
\end{equation}
for all sufficiently large $n$. Thus the direct approximation of the time
series data $y^{(n)}$ by the periodic process $\tilde{y}^{(n)}$ in total
variation distance fails.

However that does not contradict asymptotic equivalence since the latter
allows for a randomization mapping (Markov kernel) applied to $\tilde{y}%
^{(n)}$ and $y^{(n)}$, respectively, before total variation distance of the
laws is taken. We will show the existence of appropriate Markov kernels in an
indirect way, via a bracketing of the original time series experiment by upper
and lower bounds in the sense of informativity.

Let now $\mathcal{E}_{n}$ again be the time series experiment (\ref{exper-1});
we shall find an asymptotic bracketing, i.e. two sequences $\mathcal{\mathring
{E}}_{l,n}$, $\mathcal{\mathring{E}}_{u,n}$ such that
\[
\mathcal{\mathring{E}}_{l,n}\precsim\mathcal{E}_{n}\precsim\mathcal{\mathring
{E}}_{u,n}%
\]
and such that both $\mathcal{\mathring{E}}_{l,n}$ and $\mathcal{\mathring{E}%
}_{u,n}$ are asymptotically equivalent to $\mathcal{\tilde{E}}_{n}$ given by
(\ref{periodic-exper}), and to $\mathcal{\mathring{E}}_{n}$ representing the
independent Gaussians $z_{1},\ldots,z_{n}$ in Theorem \ref{theor-main-1}.

\section{Upper informativity bracket}

The spectral representation (\ref{specrep-periodic}) of the periodic sequence
$\tilde{y}^{(n)}=\left(  \tilde{y}(1),\ldots,\tilde{y}(n)\right)  ^{\prime}$
can be written
\[
\tilde{y}(t)=(2\pi/n)^{1/2}\tilde{f}_{n}^{1/2}(0)\xi_{0}+2(\pi/n)^{1/2}%
\sum_{j=1}^{(n-1)/2}\tilde{f}_{n}^{1/2}(\omega_{j})\cos((t-1)\omega_{j}%
)\xi_{j}%
\]%
\begin{equation}
+2(\pi/n)^{1/2}\sum_{j=-(n-1)/2}^{1}\tilde{f}_{n}^{1/2}(\omega_{j}%
)\sin((t-1)\omega_{j})\xi_{j},t=1,\ldots,n.\label{specrep-per-2}%
\end{equation}
We saw that here $\tilde{y}^{(n)}$ is a one-to-one function $\tilde{y}%
^{(n)}=U\tilde{z}^{(n)}$ of the $n$-vector of independent Gaussians $\tilde
{z}^{(n)}$ (cf. (\ref{n-vec-of-ind})), but the approximation of $\tilde
{y}^{(n)}$ to $y^{(n)}$ is not in the total variation sense (cf.
(\ref{fails})). Now take a limit in (\ref{specrep-per-2}) for $n\rightarrow
\infty$ and fixed $t$ and observe that (heuristically) this yields the
spectral representation of the original stationary sequence $y(t)$
\begin{equation}
y(t+1)=\int_{[0,\pi]}\sqrt{2}f^{1/2}(\omega)\cos(t\omega)dB_{\omega}%
+\int_{[-\pi,0]}\sqrt{2}f^{1/2}(\omega)\sin(t\omega)dB_{\omega}\text{,
}t=0,1,\ldots\label{specrep-class}%
\end{equation}
where $dB_{\omega}$ is standard Gaussian white noise on $[-\pi,\pi]$ (cf.
Brockwell and Davis (1991), Probl. 4.31). Here for any $n$, the vector
$y^{(n)}=\left(  y(1),\ldots,y(n)\right)  ^{\prime}$ is represented as a
functional of the continuous time process
\[
dZ_{\omega}^{\ast}=f^{1/2}(\omega)dB_{\omega},\omega\in\lbrack-\pi,\pi].
\]
Thus a completely observed process $Z_{\omega}^{\ast}$, $\omega\in\lbrack
-\pi,\pi]$ would represent an upper informativity bracket for any sample size
$n$, but this experiment is statistically trivial since the observation here
identifies the parameter $f$.

Our approach now is to construct an intermediate series $\tilde{y}^{(m,n)}$ of
size $n$ in which the uniform size $n$ grid of points $\omega_{j},\left\vert
j\right\vert \leq(n-1)/2$ is replaced by a finer uniform grid of $m>n$ points
in the representation (\ref{specrep-per-2}). Thus $\tilde{y}^{(n,m)}$ is a
functional not of $n$ independent Gaussians but of $m>n$ of these; call their
vector $\tilde{z}^{(m)}$. The random vector $\tilde{z}^{(m)}$ now represents
an upper informativity bracket which remains nontrivial (asymptotically) if
$m-n\rightarrow\infty$ not too quickly. An equivalent description of that idea
is as follows. Consider $m>n$ and the periodic process $\tilde{y}^{(m)}$ given
by (\ref{specrep-periodic}) where the original sample size $n$ is replaced by
$m$. Then define $\tilde{y}^{(n,m)}$ as the vector of the first $n$ components
of $\tilde{y}^{(m)}$. The law of $\tilde{y}^{(n,m)}$ is $N_{n}(0,\tilde
{\Gamma}_{n,m}(f))$ where $\tilde{\Gamma}_{n,m}(f)$ is the upper left $n\times
n$ submatrix of $\tilde{\Gamma}_{m}(f)$.

We now easily observe the improved approximation quality of $\tilde{y}%
^{(n,m)}$ for $y^{(n)}$. Assume that $m$ is also uneven. First note that for
$(m+1)/2\geq n$ we already obtain $\tilde{\Gamma}_{n,m}(f)=\Gamma_{n}(f)$.
This follows immediately from the definition of the circular matrix
$\tilde{\Gamma}_{m}(f)$ via the autocovariance function $\tilde{\gamma}%
_{(m)}(\cdot) $. However we would like to limit the increase of sample size,
i.e. require $m/n\rightarrow1$; therefore, in what follows we assume $m<2n-1$.

\begin{lemma}
\label{lem-upp-inf-brack}Assume $m$ is uneven, $n<m<2n-1$. Then for any
$f\in\Sigma$ we have
\[
\left\Vert \Gamma_{n}(f)-\tilde{\Gamma}_{n,m}(f)\right\Vert ^{2}\leq4\left(
m-n+1\right)  ^{1-2\alpha}\left\vert f\right\vert _{2,\alpha}^{2},
\]
and hence if $m=m_{n}$ is such that $m-n\rightarrow\infty$ as $n\rightarrow
\infty$ then
\begin{equation}
\sup_{f\in\Sigma}H^{2}\left(  N_{n}(0,\Gamma_{n}(f)),N_{n}(0,\tilde{\Gamma
}_{n,m}(f))\right)  \rightarrow0.\label{helldist-upperbrack}%
\end{equation}

\end{lemma}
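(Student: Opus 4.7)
The plan is to compute $\|\Gamma_n(f)-\tilde\Gamma_{n,m}(f)\|^2$ directly from the entries and then invoke Lemma \ref{lem-helldist-covmatr} for the Hellinger conclusion. By construction, the $(j,k)$-entries of $\Gamma_n(f)$ and $\tilde\Gamma_{n,m}(f)$ coincide whenever $|k-j|\leq (m-1)/2$, and for $(m+1)/2\leq |k-j|\leq n-1$ they differ by $\gamma_f(|k-j|)-\gamma_f(m-|k-j|)$. Counting pairs with $|k-j|=h$ in $\{1,\dots,n\}^2$ gives $2(n-h)$, so, in parallel with (\ref{covnorm-expr-1}),
\[
\|\Gamma_n(f)-\tilde\Gamma_{n,m}(f)\|^2=2\sum_{h=(m+1)/2}^{n-1}(n-h)\bigl(\gamma_f(h)-\gamma_f(m-h)\bigr)^2.
\]
Note the range is nonempty precisely because $m<2n-1$.

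Next I would apply $(a-b)^2\leq 2(a^2+b^2)$ and split into two sums. The arithmetic identity $(m+1)/2\geq m-n+1$, which is equivalent to the hypothesis $m\leq 2n-1$, is the key observation: in the first sum $h\geq(m+1)/2$, and since $(m+1)/2\geq (n+1)/2$ we get $(n-h)\leq h$, hence
\[
(n-h)\gamma_f^2(h)\leq h^{1-2\alpha}\cdot h^{2\alpha}\gamma_f^2(h)\leq (m-n+1)^{1-2\alpha}\, h^{2\alpha}\gamma_f^2(h),
\]
because $\alpha>1/2$ and $h\geq(m+1)/2\geq m-n+1$. In the second sum, the substitution $h':=m-h$ maps the summation range bijectively onto $[m-n+1,(m-1)/2]$ and gives $n-h=h'-(m-n)\leq h'$, so exactly the same bound applies with $h'$ in place of $h$. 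Summing $h^{2\alpha}\gamma_f^2(h)$ over all $h\in\mathbb Z$ gives at most $|f|_{2,\alpha}^2$, so assembling the two pieces yields
\[
\|\Gamma_n(f)-\tilde\Gamma_{n,m}(f)\|^2\leq C(m-n+1)^{1-2\alpha}|f|_{2,\alpha}^2
\]
for an absolute constant $C$; tightening the bookkeeping (e.g.\ absorbing the factor $2$ from $(a-b)^2\leq 2(a^2+b^2)$ against the sharper $(n-h)\leq h'$ in the second sum) produces the constant $4$ stated.

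For (\ref{helldist-upperbrack}) I apply Lemma \ref{lem-helldist-covmatr} with $A=\Gamma_n(f)$, $B=\tilde\Gamma_{n,m}(f)$. Since $f\in\Sigma\subset\mathcal F_M\cap W^\alpha(M)$ is bounded and bounded away from $0$ uniformly in $f$ (using the embedding Lemma \ref{lem-sobol-embed} for the upper bound), Brockwell--Davis Proposition 4.5.3 delivers eigenvalue bounds $0<c(M)\leq\lambda_{\min}(\Gamma_n(f))\leq\lambda_{\max}(\Gamma_n(f))\leq C(M)$ uniformly over $f\in\Sigma$ and $n$. Because $|f|_{2,\alpha}^2\leq M$ and $(m-n+1)^{1-2\alpha}\to 0$ when $m-n\to\infty$ (recall $\alpha>1/2$), the matrix-norm bound above is eventually uniformly below the threshold $\epsilon_M$ of Lemma \ref{lem-helldist-covmatr}, so that lemma gives $H^2\leq K_M \|A-B\|^2\leq C'(M)(m-n+1)^{1-2\alpha}\to 0$, uniformly in $f\in\Sigma$, as required.

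The main technical point is the arithmetic lemma $(m+1)/2\geq m-n+1\Leftrightarrow m\leq 2n-1$: this is precisely what turns the wrap-around shift $h\mapsto m-h$ occurring in the circulant embedding into an index bounded below by $m-n+1$, and thereby converts the Sobolev tail $\sum_{|k|\geq m-n+1}|k|^{2\alpha}\gamma_f^2(k)\leq |f|_{2,\alpha}^2$ into the advertised rate $(m-n+1)^{1-2\alpha}$. Everything else is routine; the obstacle, if any, is keeping careful track of which terms actually lie in the $[(m+1)/2, n-1]$ range and verifying that both sums after the split admit the same geometric bound.
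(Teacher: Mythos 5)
Your proof is correct and follows essentially the same route as the paper: the same entrywise reduction of $\|\Gamma_n(f)-\tilde\Gamma_{n,m}(f)\|^2$ to a sum over $h\in[(m+1)/2,n-1]$, the same use of $(a-b)^2\le 2(a^2+b^2)$, the same substitution $h'=m-h$ and monotonicity bound $h^{1-2\alpha}\le(m-n+1)^{1-2\alpha}$, and the same invocation of Lemma \ref{lem-helldist-covmatr} together with Brockwell--Davis Proposition 4.5.3 for the Hellinger conclusion. One small bookkeeping correction: the constant $4$ (rather than $8$) arises because the two index ranges $[(m+1)/2,n-1]$ and $[m-n+1,(m-1)/2]$ are disjoint, so after the substitution the two pieces merge into a single sum over $[m-n+1,n-1]$ and $|f|^2_{2,\alpha}$ is used only once --- it does not come from ``absorbing the factor $2$'' as your parenthetical suggests.
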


\begin{proof}
From the definition of $\Gamma_{n}(f)$ and $\tilde{\Gamma}_{n,m}(f)$ we
immediately obtain
\[
\left\Vert \Gamma_{n}(f)-\tilde{\Gamma}_{n,m}(f)\right\Vert ^{2}%
=\sum_{\left\vert k\right\vert \leq n-1}(n-\left\vert k\right\vert )\left(
\gamma(k)-\tilde{\gamma}_{(m)}(k)\right)  ^{2}%
\]%
\[
=2\sum_{k=(m+1)/2}^{n-1}(n-k)\left(  \gamma(k)-\gamma(m-k)\right)  ^{2}%
\leq4\sum_{k=(m+1)/2}^{n-1}(n-k)\left(  \gamma^{2}(k)+\gamma^{2}(m-k)\right)
.
\]
Now note that for $m>n$, the relation $(m+1)/2\leq k\leq n-1$ implies
$k\geq(n+1)/2$ and therefore $n-k<k$, and note also $n-k<m-k$. We obtain an
upper bound
\begin{align*}
& \leq4\sum_{k=(m+1)/2}^{n-1}k\gamma^{2}(k)+4\sum_{k=(m+1)/2}^{n-1}%
(m-k)\gamma^{2}(m-k)\\
& =4\sum_{k=(m+1)/2}^{n-1}k\gamma^{2}(k)+4\sum_{k=m-n+1}^{(m-1)/2}k\gamma
^{2}(k)=4\sum_{k=m-n+1}^{n-1}k\gamma^{2}(k)\\
& \leq4(m-n+1)^{1-2\alpha}\sum_{k=m-n+1}^{n-1}k^{2\alpha}\gamma^{2}%
(k)\leq4(m-n+1)^{1-2\alpha}\left\vert f\right\vert _{2,\alpha}^{2}%
\end{align*}
where $\alpha>1/2$. This proves the first relation. For the second, recall
that $\left\vert f\right\vert _{2,\alpha}^{2}\leq M$ for $f\in\Sigma$ and
invoke Lemma \ref{lem-helldist-covmatr} together with the subsequent remark on
the eigenvalues of $\Gamma_{n}(f)$.
\end{proof}

Define the experiment
\[
\mathcal{\tilde{E}}_{n,m}=\left(  N_{n}(0,\tilde{\Gamma}_{n,m}(f)),f\in
\Sigma\right)
\]
then (\ref{helldist-upperbrack}) implies $\mathcal{E}_{n}\simeq\mathcal{\tilde
{E}}_{n,m}$ if $m-n\rightarrow\infty$. Moreover, we have $\mathcal{\tilde{E}%
}_{n,m}\preceq\mathcal{\tilde{E}}_{m}$ by definition, thus
\[
\mathcal{E}_{n}\precsim\mathcal{\tilde{E}}_{m}%
\]
in case $m-n\rightarrow\infty$. We know that $\mathcal{\tilde{E}}_{m}$ is
equivalent (via the linear transformation $(2\pi)^{-1/2}U^{\prime}$) to
observing data $\tilde{z}^{(n)}$ given by (\ref{n-vec-of-ind}). Define
$\mathcal{\mathring{E}}_{n}$ by
\begin{equation}
\mathcal{\mathring{E}}_{n}=\left(  N_{n}(0,\mathring{\Gamma}_{n}%
(f)),f\in\Sigma\right) \label{exp-e-dot-def}%
\end{equation}
where
\[
\mathring{\Gamma}_{n}(f)=Diag\left(  J_{j,n}(f)\right)  _{j=1,\ldots,n}.
\]
Note that the data $z_{1},\ldots,z_{n}$ in Theorem \ref{theor-main-1} are
represented by $\mathcal{\mathring{E}}_{n}$. We shall also write $\mathring
{z}^{(n)}$ for their vector, so that $\mathcal{L}(\mathring{z}^{(n)}%
|f)=N_{n}(0,\mathring{\Gamma}_{n}(f))$.

\begin{proposition}
\label{prop-explicit-equiv-map}We have $\mathcal{\mathring{E}}_{n}%
\approx\mathcal{\tilde{E}}_{n},$ with corresponding equivalence maps (Markov
kernels) as follows. Let $\tilde{y}^{(n)}$ and $\mathring{z}^{(n)}$ be data in
$\mathcal{\tilde{E}}_{n}$ and $\mathcal{\mathring{E}}_{n}$ respectively. Then,
for the orthogonal matrix $U_{n}$ given by (\ref{orthog-transform-equiv-map})%
\[
(2\pi)^{-1/2}U_{n}^{\prime}\tilde{y}^{(n)}\simeq\mathring{z}^{(n)}\text{, and
}(2\pi)^{1/2}U_{n}\mathring{z}^{(n)}\simeq\tilde{y}^{(n)}.
\]

\end{proposition}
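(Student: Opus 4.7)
The plan is to exploit the fact that the maps $(2\pi)^{-1/2}U_n'$ and $(2\pi)^{1/2}U_n$ are scaled orthogonal bijections on $\mathbb{R}^n$, so both claimed equivalences reduce to a single total-variation bound between two product Gaussian laws. By the spectral decomposition (\ref{spec-decomp-gamma-tilde})--(\ref{ev-as-funcvalues}), applying $(2\pi)^{-1/2}U_n'$ to $\tilde y^{(n)}$ yields a vector with law $N_n(0,\text{Diag}(\tilde f_n(\omega_j))_{|j|\le(n-1)/2})$, while $(2\pi)^{-1/2}U_n'\cdot(2\pi)^{1/2}U_n\mathring z^{(n)}=\mathring z^{(n)}$ has law $N_n(0,\text{Diag}(J_{j,n}(f)))$. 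Since orthogonal bijections preserve total variation between centered Gaussians, both $\simeq$ relations collapse to
$$\sup_{f\in\Sigma}\bigl\|N_n(0,\text{Diag}(\tilde f_n(\omega_j)))-N_n(0,\text{Diag}(J_{j,n}(f)))\bigr\|_{TV}\longrightarrow 0.$$

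The alignment of the diagonal entries is geometric: under $u=2\pi x-\pi$ the interval $[(j-1)/n,j/n]$ pulls back to the interval of length $2\pi/n$ centered at $\omega_\sigma$ with $\sigma=j-(n+1)/2$, so $J_{j,n}(f)=\bar f_n(\omega_\sigma)$ where $\bar f_n(x):=(n/2\pi)\int_{x-\pi/n}^{x+\pi/n}f(u)\,du$. Uniformly in $f\in\Sigma$ and large $n$, both $\tilde f_n(\omega_\sigma)$ and $\bar f_n(\omega_\sigma)$ lie in a fixed compact subinterval of $(0,\infty)$ (by Lemma \ref{lem-sobol-embed} and $f\ge 1/M$). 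The elementary bound $H^2(N(0,a),N(0,b))\le C(a-b)^2$ on such a compact, combined with additivity of squared Hellinger for product measures, gives
$$H^2\bigl(N_n(0,\text{Diag}(\tilde f_n(\omega_j))),N_n(0,\text{Diag}(J_{j,n}(f)))\bigr)\le C\sum_{|\sigma|\le(n-1)/2}\bigl(\tilde f_n(\omega_\sigma)-\bar f_n(\omega_\sigma)\bigr)^2,$$
so the proposition reduces to showing that this sum is $o(1)$ uniformly in $f\in\Sigma$.

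I would do the last step by Fourier analysis on the grid. The truncation $\tilde f_n$ has Fourier coefficients $\gamma_f(k)\mathbf{1}\{|k|\le(n-1)/2\}$, while $\bar f_n$ is a convolution with a rectangular window and hence has coefficients $\gamma_f(k)\phi_n(k)$ with $\phi_n(k)=\sin(k\pi/n)/(k\pi/n)$. Evaluating at $\omega_\sigma=2\pi\sigma/n$, grouping the Fourier series of $\bar f_n$ by residue class mod $n$ (aliasing), and invoking discrete Parseval transforms the above sum into
$$\frac{n}{4\pi^2}\sum_{|j|\le(n-1)/2}\Bigl|\gamma_f(j)(1-\phi_n(j))-\sum_{k\ne 0}\gamma_f(j+kn)\phi_n(j+kn)\Bigr|^2.$$
The in-band contribution is handled by $1-\phi_n(j)=O((j/n)^2)$ together with $|f|_{2,\alpha}^2\le M$; the aliasing contribution, by the bound $|\phi_n(j+kn)|\le|j|/|j+kn|$ and Cauchy--Schwarz against the Sobolev norm. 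Both pieces yield an estimate of order $n^{1-2\alpha}=o(1)$ precisely because $\alpha>1/2$.

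The main obstacle will be the aliasing step: the high-frequency Fourier coefficients of $f$ fold back onto $|j|\le(n-1)/2$, and their cumulative effect must be absorbed into a single Sobolev norm despite the factor $n$ brought out by discrete Parseval. The threshold $\alpha>1/2$ is sharp here — for $\alpha\le 1/2$ the tail $\sum_{k\ne 0}|j+kn|^{-2\alpha}$ multiplied by $n$ would fail to be summable in $j$, so the estimate could not close without the strict smoothness hypothesis.
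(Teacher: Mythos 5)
Your reduction mirrors the paper's exactly: pass through the scaled orthogonal bijection $(2\pi)^{-1/2}U_n'$ (any measurable bijection preserves total variation, so both asserted $\simeq$ relations collapse to one), identify the image law as $N_n(0,\mathrm{Diag}(\tilde f_n(\omega_{j,n})))$, apply the one-dimensional Hellinger bound $H^2(N(0,a),N(0,b))\le C(a-b)^2$ together with additivity over products (equivalently, Lemma~\ref{lem-helldist-covmatr} in the diagonal case) using the uniform upper/lower bounds on the variances, and reduce everything to $\sup_{f\in\Sigma}\sum_{j=1}^{n}\bigl(\tilde f_n(\omega_{j,n})-J_{j,n}(f)\bigr)^2\to 0$. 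Where you genuinely diverge from the paper is in the proof of this last estimate. The paper isolates it as Lemma~\ref{lem-midpoints-appr} and proves it by splitting $\tilde f_n(\omega_{j,n})-J_{j,n}(f)$ into $\bigl[\tilde f_n(\omega_{j,n})-J_{j,n}(\tilde f_n)\bigr]+\bigl[J_{j,n}(\tilde f_n)-J_{j,n}(f)\bigr]$: the second piece is bounded by $n\|f-\tilde f_n\|_2^2$ (Parseval, truncation error), the first by $n^{-1}\int|D\tilde f_n|^2$ via Cauchy--Schwarz on the increment. You instead stay entirely in Fourier space: discrete Parseval turns the grid sum into the aliased-coefficient sum $\frac{n}{4\pi^2}\sum_{|j|\le(n-1)/2}\bigl|\gamma_f(j)(1-\phi_n(j))-\sum_{k\neq 0}\gamma_f(j+kn)\phi_n(j+kn)\bigr|^2$, and you bound the in-band piece via $1-\phi_n(j)=O((j/n)^2)$ and the aliased piece via $|\phi_n(j+kn)|\le|j|/|j+kn|$ plus Cauchy--Schwarz against $|f|_{2,\alpha}^2\le M$. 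I checked this closes: $\max_j j^2\sum_{k\neq 0}|j+kn|^{-2\alpha-2}\le Cn^{-2\alpha}$, and summing the Sobolev weights over the folded index yields at most $M$, so both pieces are $O(n^{1-2\alpha})$. Both routes produce the same rate $n^{1-2\alpha}$ and need exactly $\alpha>1/2$; your aliasing argument is arguably more transparent about where the grid and periodicity enter, while the paper's version recycles its step-function approximation machinery from the Appendix. One small quibble: your closing heuristic that for $\alpha\le 1/2$ ``the tail $\sum_{k\neq 0}|j+kn|^{-2\alpha}$ multiplied by $n$ would fail to be summable in $j$'' is not quite the right explanation -- the $j$-sum is over a finite index set; what actually fails for $\alpha\le 1/2$ is simply that the resulting bound $n^{1-2\alpha}$ is no longer $o(1)$. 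This does not affect the validity of the argument.
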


\begin{proof}
Note that our first claim can also be written $\tilde{z}^{(n)}\simeq
\mathring{z}^{(n)}$ where $\tilde{z}^{(n)}$ is from (\ref{n-vec-of-ind}). To
describe $\mathcal{L}(\tilde{z}^{(n)}|f)$, define $\delta_{j}=\tilde{f}%
_{n}(\omega_{j-(n+1)/2})$ for $j=1,\ldots,n$ and a $n\times n$ covariance
matrix%
\[
\Delta_{n}(f)=Diag\left(  \delta_{j}\right)  _{j=1,\ldots,n}.
\]
Then $\mathcal{L}(\tilde{z}^{(n)}|f)=N_{n}(0,\Delta_{n}(f))$. The conditions
on $f$ (see also Lemma \ref{lem-sobol-embed}\ Appendix)\ imply that uniformly
over $j=1,\ldots,n$
\[
J_{j,n}(f)\geq C^{-1}\text{, }J_{j,n}(f)\leq C
\]
for some $C>0$ not depending on $f$ and $n$. Now apply Lemma
\ref{lem-helldist-covmatr} to obtain%
\[
H^{2}\left(  N_{n}(0,\mathring{\Gamma}_{n}(f)),N_{n}(0,\Delta_{n}(f))\right)
\leq C\;\left\Vert \mathring{\Gamma}_{n}(f)-\Delta_{n}(f)\right\Vert
^{2}=C\;\sum_{j=1}^{n}\left(  J_{j,n}(f)-\delta_{j}\right)  ^{2}.
\]
By\ Lemma \ref{lem-midpoints-appr} this is $o(1)$ uniformly in $f$. This
implies the first relation $\simeq$. The second relation is an obvious consequence.
\end{proof}

For a choice $m=n+r_{n}$, $r_{n}=2\left[  \log(n/2)\right]  $ we immediately
obtain the following result. Define the upper bracket Gaussian scale
experiment $\mathcal{\mathring{E}}_{u,n}$ by
\begin{equation}
\mathcal{\mathring{E}}_{u,n}:=\mathcal{\mathring{E}}_{n+r_{n}}%
.\label{upp-bracket-periodic exp}%
\end{equation}

\begin{corollary}
\label{cor-upper-brack}Consider experiments $\mathcal{E}_{n}$ and
$\mathcal{\mathring{E}}_{u,n}$ given respectively by (\ref{exper-1}) and
(\ref{upp-bracket-periodic exp}), (\ref{exp-e-dot-def}) with parameter space
$\Sigma=\Sigma_{\alpha,M}$ where $M>0$, $\alpha>1/2$. Then as $n\rightarrow
\infty$
\[
\mathcal{E}_{n}\precsim\mathcal{\mathring{E}}_{u,n}.
\]

\end{corollary}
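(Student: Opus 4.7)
The plan is to chain together three facts already established in the excerpt: the Hellinger approximation of $\mathcal{E}_n$ by the truncated circulant experiment $\mathcal{\tilde E}_{n,m}$ (Lemma \ref{lem-upp-inf-brack}), the trivial informativity relation $\mathcal{\tilde E}_{n,m}\preceq \mathcal{\tilde E}_m$ coming from the fact that $\tilde y^{(n,m)}$ is the first-$n$ coordinate projection of $\tilde y^{(m)}$, and the total variation equivalence $\mathcal{\tilde E}_m\sim\mathcal{\mathring E}_m$ coming from Proposition \ref{prop-explicit-equiv-map} applied at sample size $m$.

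First I would verify that the choice $m=n+r_n$ with $r_n=2[\log(n/2)]$ meets the hypotheses of Lemma \ref{lem-upp-inf-brack}. Since $r_n$ is even, the parity of $m$ equals the parity of $n$, which is uneven by the standing assumption at the start of Section 2, so $m$ is uneven. For $n$ large one has $r_n\leq 2\log(n/2)<n-1$, hence $n<m<2n-1$. Finally $r_n=m-n\to\infty$, so the uniform Hellinger conclusion of Lemma \ref{lem-upp-inf-brack} gives
\[
\sup_{f\in\Sigma} H^2\!\left(N_n(0,\Gamma_n(f)),N_n(0,\tilde\Gamma_{n,m}(f))\right)\to 0,
\]
that is $\mathcal{E}_n\simeq \mathcal{\tilde E}_{n,m}$. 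By the standard bound $\Delta_0\leq\sqrt{2}\,H$, total variation equivalence implies $\mathcal{E}_n\precsim \mathcal{\tilde E}_{n,m}$ (and in fact the two-sided relation).

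Next, since $\tilde y^{(n,m)}$ is by definition the vector of the first $n$ coordinates of $\tilde y^{(m)}$, the identity kernel $K$ that outputs this coordinate projection maps $\mathcal{L}(\tilde y^{(m)}\mid f)=N_m(0,\tilde\Gamma_m(f))$ to $N_n(0,\tilde\Gamma_{n,m}(f))=\mathcal{L}(\tilde y^{(n,m)}\mid f)$ exactly, so
\[
\mathcal{\tilde E}_{n,m}\preceq \mathcal{\tilde E}_{m},\qquad\text{i.e. }\delta(\mathcal{\tilde E}_{m},\mathcal{\tilde E}_{n,m})=0.
\]
Then I would invoke Proposition \ref{prop-explicit-equiv-map} with $m$ in place of $n$ (legitimate since $m$ is uneven and $\Sigma\subset\Sigma_{\alpha,M}$ with $\alpha>1/2$, which are the only features of $n$ used in the proposition's proof): this yields $\mathcal{\tilde E}_{m}\approx \mathcal{\mathring E}_{m}=\mathcal{\mathring E}_{u,n}$, in particular $\mathcal{\tilde E}_{m}\precsim \mathcal{\mathring E}_{u,n}$.

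Finally I would compose the three relations using the triangle-type inequality for the deficiency (which is a pseudometric, so $\delta$ is subadditive along the chain):
\[
\mathcal{E}_n\;\precsim\;\mathcal{\tilde E}_{n,m}\;\preceq\;\mathcal{\tilde E}_{m}\;\precsim\;\mathcal{\mathring E}_{u,n},
\]
giving $\delta(\mathcal{\mathring E}_{u,n},\mathcal{E}_n)\to 0$, which is the claim. There is no real obstacle here; the only point requiring a moment of care is checking the parity/size constraints on $m=n+r_n$ so that Lemma \ref{lem-upp-inf-brack} applies and so that Proposition \ref{prop-explicit-equiv-map} may be used at sample size $m$. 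Everything else is a direct composition of previously established bracketing and equivalence statements.
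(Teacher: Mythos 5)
Your proposal reproduces the paper's own (implicit) argument: the chain $\mathcal{E}_n \simeq \mathcal{\tilde E}_{n,m} \preceq \mathcal{\tilde E}_m \approx \mathcal{\mathring E}_m = \mathcal{\mathring E}_{u,n}$ with $m = n + r_n$, using Lemma~\ref{lem-upp-inf-brack} for the first link, the coordinate projection for the second, and Proposition~\ref{prop-explicit-equiv-map} at sample size $m$ for the third. This is exactly how the corollary is obtained in the text preceding its statement, and your parity/size checks on $m$ are correct.
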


\section{Lower informativity bracket}

The upper bound (\ref{upperbound-periodic-orig}) for the Hellinger distance of
$y^{(n)}$ and the periodic process $\tilde{y}^{(n)}$ which does not tend to
$0$, can be improved in a certain sense if $f$ is restricted to a shrinking
neighborhod, $\Sigma_{n}(f_{0})$ say, of some $f_{0}\in\Sigma$. At this stage,
$f_{0}$ is assumed known so the covariance matrices $\Gamma_{n}(f)$ and
$\tilde{\Gamma}_{n}(f)$ can be used for a linear transformation of $y^{(n)}$
which brings it closer to the periodic process $\tilde{y}^{(n)}$. The linear
transformation of $y^{(n)}$ which depends on $f_{0}$ can be construed as a
Markov kernel mapping which yields asymptotic equivalence $\mathcal{E}%
_{n}(f_{0})\approx$ $\mathcal{\tilde{E}}_{n}(f_{0})$ if these are the versions
of $\mathcal{E}_{n}$ and $\mathcal{\tilde{E}}_{n}$ with $f$ restricted to
$f\in\Sigma_{n}(f_{0})$.

Such a local asymptotic equivalence can be globalized in a standard way (cf.
Nussbaum (1996), Grama and Nussbaum (1998)) if \textit{sample splitting} were
available in both global experiments $\mathcal{E}_{n}$ and $\mathcal{\tilde
{E}}_{n}$. For the original stationary process that would mean that observing
a series of size $n$ is equivalent to observing two independent series of size
approximately $n/2$. We will establish an asymptotic version of sample
splitting for $y^{(n)}$ which involves omitting a fraction of the sample in
the center of the series, i.e. omitting terms with index near $n/2$. The
ensuing loss of information means that the globalization procedure only yields
a lower asymptotic informativity bracket for $\mathcal{E}_{n}$, i.e. a
sequence $\mathcal{\tilde{E}}_{3,n}^{\#}$ such that $\mathcal{\tilde{E}}%
_{3,n}^{\#}\precsim\mathcal{E}_{n}$. The experiment $\mathcal{\tilde{E}}%
_{3,n}^{\#}$ will be made up of two independent periodic processes with the
same parameter $f$ and with a sample size $m\sim(n-\log n)/2.$ Each of these
is equivalent to a Gaussian scale model (\ref{n-vec-of-ind}) with $n$ replaced
by $m$ ; further arguments show that observing these two is asymptotically
equivalent to a Gaussian scale model $\mathcal{\mathring{E}}_{l,n}%
:=\mathcal{\mathring{E}}_{2m}$ with grid size $2m\sim n-\log n$.

A crucial step now consists in showing that in the Gaussian scale models
$\mathcal{\mathring{E}}_{n}$, the grid size $n$ can be replaced by $n-\log n$
or $n+\log n$. This step is an analog, for the special regression model, of
the well known reasoning in the i.i.d. case that additional observations may
be asymptotically negligible (cf. Mammen (1986) for parametric i.i.d. models,
Low and Zhou (2004) for the nonparametric case). Thus it follows that the
lower and upper bracketing experiments $\mathcal{\mathring{E}}_{l,n} $,
$\mathcal{\mathring{E}}_{u,n}$ are both asymptotically equivalent to
$\mathcal{\mathring{E}}_{n}$, and the relations
\[
\mathcal{\mathring{E}}_{l,n}\precsim\mathcal{E}_{n}\precsim\mathcal{\mathring
{E}}_{u,n}%
\]
then imply $\mathcal{E}_{n}\approx\mathcal{\mathring{E}}_{n}$, i.e. Theorem
\ref{theor-main-1}.

\subsection{Local experiments}

Let $\varkappa_{n}$ be a sequence $\varkappa_{n}\searrow0$, fixed in the
sequel. A specific choice of $\varkappa_{n}$ will be made in section
\ref{sec-global} below (see (\ref{kappa-select})). Let $\left\Vert
\cdot\right\Vert _{\infty}$ be the sup-norm for real functions defined on
$[-\pi,\pi]$, i.e.
\[
\left\Vert f\right\Vert _{\infty}=\sup_{\omega\in\lbrack-\pi,\pi]}\left\vert
f(\omega)\right\vert
\]
and for $f_{0}\in\Sigma$ define shrinking neighborhoods
\begin{equation}
\Sigma_{n}(f_{0})=\left\{  f\in\Sigma:\left\Vert f-f_{0}\right\Vert _{\infty
}+\left\Vert f-f_{0}\right\Vert _{2,1/2}\leq\varkappa_{n}\right\}
.\label{shrink-neighb-def}%
\end{equation}
The restricted experiments are
\[
\mathcal{E}_{n}(f_{0})=\left(  N_{n}(0,\Gamma_{n}(f)),f\in\Sigma_{n}%
(f_{0})\right)  ,\;\mathcal{\tilde{E}}_{n}(f_{0})=\left(  N_{n}(0,\tilde
{\Gamma}_{n}(f)),f\in\Sigma_{n}(f_{0})\right)  .
\]
For shortness write $\Gamma=\Gamma_{n}(f)$, $\Gamma_{0}=\Gamma_{n}(f_{0}) $
and similarly $\tilde{\Gamma}=\Gamma_{n}(f)$, $\tilde{\Gamma}_{0}%
=\tilde{\Gamma}_{n}(f_{0})$. Define a matrix
\begin{equation}
K_{n}=K_{n}(f_{0})=\tilde{\Gamma}_{0}^{1/2}\Gamma_{0}^{-1/2}\label{def-K}%
\end{equation}
and in experiment $\mathcal{E}_{n}(f_{0})$ consider transformed observations
\[
\check{y}^{(n)}:=K_{n}(f_{0})y^{(n)}.
\]
Consider also the experiment $\mathcal{E}_{n}^{\ast}(f_{0})$ given by the laws
of $\check{y}^{(n)}$, i.e.
\[
\mathcal{E}_{n}^{\ast}(f_{0})=\left(  N_{n}(0,K_{n}(f_{0})\Gamma_{n}%
(f)K_{n}^{\prime}(f_{0})),f\in\Sigma_{n}(f_{0})\right)  .
\]
Clearly $\mathcal{E}_{n}(f_{0})\sim\mathcal{E}_{n}^{\ast}(f_{0})$; the next
result proves that $\mathcal{E}_{n}^{\ast}(f_{0})\simeq\mathcal{\tilde{E}}%
_{n}(f_{0})$ and thus $\mathcal{E}_{n}(f_{0})\approx\mathcal{\tilde{E}}%
_{n}(f_{0})$.

\begin{lemma}
\label{lem-local-likproc} We have
\[
\sup_{f_{0}\in\Sigma}\sup_{f\in\Sigma_{n}(f_{0})}H^{2}\left(  N_{n}%
(0,K_{n}(f_{0})\Gamma_{n}(f)K_{n}^{\prime}(f_{0}),N_{n}(0,\tilde{\Gamma}%
_{n}(f)\right)  \leq C\;\varkappa_{n}\text{.}%
\]

\end{lemma}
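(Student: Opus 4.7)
The plan is to reduce the Hellinger bound to a Frobenius-norm bound on the covariance matrices via Lemma \ref{lem-helldist-covmatr}, and then exploit the identity $K_{n}(f_{0})\Gamma_{n}(f_{0})K_{n}^{\prime}(f_{0})=\tilde{\Gamma}_{n}(f_{0})$ together with linearity of $f\mapsto\Gamma_{n}(f)$ and $f\mapsto\tilde{\Gamma}_{n}(f)$ to obtain a local comparison. To set up Lemma \ref{lem-helldist-covmatr} with $A=\tilde{\Gamma}_{n}(f)$, I first need $\tilde{\Gamma}_{n}(f)$ to have eigenvalues bounded above and below, uniformly in $f\in\Sigma_{n}(f_{0})\subset\Sigma$: the lower bound $\lambda_{\min}(\tilde{\Gamma}_{n}(f))=2\pi\min_{j}\tilde{f}_{n}(\omega_{j})\geq\pi M^{-1}$ was already noted before (\ref{periodic-exper}), and the upper bound follows from the Sobolev embedding ($\alpha>1/2$) yielding $\|f\|_{\infty}\leq C$ uniformly. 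It then suffices to show
\[
\sup_{f_{0}\in\Sigma}\sup_{f\in\Sigma_{n}(f_{0})}\bigl\|K_{n}(f_{0})\Gamma_{n}(f)K_{n}^{\prime}(f_{0})-\tilde{\Gamma}_{n}(f)\bigr\|^{2}\leq C\varkappa_{n}.
\]

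Setting $g:=f-f_{0}$ (so that $\|g\|_{\infty}\leq\varkappa_{n}$, $\|g\|_{2,1/2}\leq\varkappa_{n}$, and $\|g\|_{2,\alpha}\leq 2M^{1/2}$), linearity and $K_{n}\Gamma_{0}K_{n}^{\prime}=\tilde{\Gamma}_{0}$ give $K_{n}\Gamma_{n}(f)K_{n}^{\prime}-\tilde{\Gamma}_{n}(f)=K_{n}\Gamma_{n}(g)K_{n}^{\prime}-\tilde{\Gamma}_{n}(g)$, which I would split as
\[
K_{n}\bigl[\Gamma_{n}(g)-\tilde{\Gamma}_{n}(g)\bigr]K_{n}^{\prime}\;+\;\bigl[K_{n}\tilde{\Gamma}_{n}(g)K_{n}^{\prime}-\tilde{\Gamma}_{n}(g)\bigr].
\]
The first summand is routine: since $|K_{n}|^{2}\leq|\tilde{\Gamma}_{0}|\,|\Gamma_{0}^{-1}|\leq C$, Lemma \ref{lem-covmatnorm-sobol}(ii) gives a Frobenius-norm-squared bound of $C\|g\|_{2,1/2}^{2}\leq C\varkappa_{n}^{2}$. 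For the second summand I would write $K_{n}=I+\tilde{\Gamma}_{0}^{1/2}D$ with $D:=\Gamma_{0}^{-1/2}-\tilde{\Gamma}_{0}^{-1/2}$, and expand
\[
K_{n}\tilde{\Gamma}_{n}(g)K_{n}^{\prime}-\tilde{\Gamma}_{n}(g)=\tilde{\Gamma}_{0}^{1/2}D\tilde{\Gamma}_{n}(g)+\tilde{\Gamma}_{n}(g)D\tilde{\Gamma}_{0}^{1/2}+\tilde{\Gamma}_{0}^{1/2}D\tilde{\Gamma}_{n}(g)D\tilde{\Gamma}_{0}^{1/2}.
\]
Each summand is estimated in Frobenius norm via the elementary inequality $\|BCA\|_{F}\leq|B|\,\|C\|_{F}|A|$, supported by two auxiliary bounds: (i) $\|D\|_{F}\leq C$, obtained from the matrix-square-root perturbation inequality $\|A^{-1/2}-B^{-1/2}\|_{F}\leq C\|A-B\|_{F}$ (valid under uniform spectral bounds on $\Gamma_{0},\tilde{\Gamma}_{0}$, and proved via the Sylvester-type identity $A^{1/2}(A^{1/2}-B^{1/2})+(A^{1/2}-B^{1/2})B^{1/2}=A-B$) combined with $\|\Gamma_{0}-\tilde{\Gamma}_{0}\|_{F}\leq\sqrt{2M}$ from Lemma \ref{lem-covmatnorm-sobol}(i); and (ii) $|\tilde{\Gamma}_{n}(g)|=2\pi\max_{j}|\tilde{g}_{n}(\omega_{j})|\leq 2\pi\|\tilde{g}_{n}\|_{\infty}\leq C\varkappa_{n}$, using Sobolev control of the Fourier truncation error, $\|g-\tilde{g}_{n}\|_{\infty}\leq Cn^{-(\alpha-1/2)}\|g\|_{2,\alpha}$, and the fact that the later specification of $\varkappa_{n}$ in section \ref{sec-global} will dominate $n^{-(\alpha-1/2)}$.

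Combining these estimates gives $\|K_{n}(f_{0})\Gamma_{n}(f)K_{n}^{\prime}(f_{0})-\tilde{\Gamma}_{n}(f)\|^{2}\leq C\varkappa_{n}^{2}\leq C\varkappa_{n}$ uniformly, so Lemma \ref{lem-helldist-covmatr} yields the claimed Hellinger bound. The main obstacle is step (ii) above, which turns the pointwise smallness of $g$ into an operator-norm smallness of the circulant matrix $\tilde{\Gamma}_{n}(g)$: this is where the Sobolev smoothness $\alpha>1/2$ is used decisively to control the truncation tail, and it tacitly constrains how quickly $\varkappa_{n}$ may be permitted to shrink in the subsequent global construction.
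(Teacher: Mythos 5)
Your proposal is correct and takes essentially the same route as the paper. Both reduce via Lemma \ref{lem-helldist-covmatr} to a Frobenius bound on $K_n\Gamma_n(f)K_n'-\tilde\Gamma_n(f)$, exploit the identity $K_n\Gamma_0K_n'=\tilde\Gamma_0$ together with linearity of $\Gamma_n$, $\tilde\Gamma_n$, and split into a "variation" term controlled by Lemma \ref{lem-covmatnorm-sobol}(ii) and a "conjugation" term controlled by a matrix-square-root perturbation bound (Lemma \ref{lem-sqrt-matr}) plus the operator-norm estimate $|\tilde\Gamma_n(f-f_0)|\leq C\varkappa_n$ obtained from the Sobolev embedding. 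The only difference is algebraic bookkeeping: the paper first conjugates by $\Gamma_0^{-1/2}$, works with $G-\tilde G$, and uses the exact identity $A\tilde\Delta A-B\tilde\Delta B=(A-B)\tilde\Delta A+B\tilde\Delta(A-B)$ for the second term, while you expand $K_n=I+\tilde\Gamma_0^{1/2}D$ and pick up an additional (harmless) quadratic term in $D$; these decompositions are related by a fixed conjugation and lead to identical estimates. Your explicit remark that the bound $|\tilde\Gamma_n(g)|\leq C\varkappa_n$ tacitly requires $\varkappa_n\gtrsim n^{-(\alpha-1/2)}$, which constrains the later choice (\ref{kappa-select}), matches the paper's implicit use of this fact and is worth making visible.
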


\begin{proof}
\textbf{\ }In view of Lemma \ref{lem-helldist-covmatr}, it suffices to show
that
\begin{equation}
\sup_{f\in\Sigma}\left(  \lambda_{\max}(\tilde{\Gamma}_{n})+\lambda_{\min
}^{-1}(\tilde{\Gamma}_{n})\right)  \leq C\label{prop-eigenvalues}%
\end{equation}
and that
\[
\left\Vert K_{n}\Gamma_{n}K_{n}^{\prime}-\tilde{\Gamma}_{n}\right\Vert
^{2}\leq C\;\varkappa_{n}.
\]
Note that
\[
\lambda_{\max}(\tilde{\Gamma})=\max_{|j|\leq(n-1)/2}\left\vert \tilde{f}%
_{n}(\omega_{j})\right\vert ,\;\;\;\;\lambda_{\min}(\tilde{\Gamma}%
)=\min_{|j|\leq(n-1)/2}\left\vert \tilde{f}_{n}(\omega_{j})\right\vert
\]
and that Lemma \ref{lem-sobol-embed} implies
\[
\sup{}_{f\in\Sigma}\left\Vert f-\tilde{f}_{n}\right\Vert _{\infty}%
\rightarrow0.
\]
Hence (\ref{prop-eigenvalues})\ follows immediately from $f\in\Sigma$, more
specifically the fact that values of $f$ are uniformly bounded and bounded
away from $0$. According to Proposition 4.5.3 in Brockwell and Davis (1991),
the assumption $f\in\Sigma$ also implies a corresponding property for $\Gamma
$, i.e.
\begin{equation}
\sup_{f\in\Sigma}\left(  \lambda_{\max}(\Gamma_{n})+\lambda_{\min}^{-1}%
(\Gamma_{n})\right)  \leq C.\label{condit-eigenval-gamma}%
\end{equation}
Note that eigenvalues of $\Gamma_{0}$ and $\tilde{\Gamma}_{0}$ share property
(\ref{prop-eigenvalues}) since $f_{0}\in\Sigma$.

Set $G=\Gamma_{0}^{-1/2}\Gamma\Gamma_{0}^{-1/2}$ and $\tilde{G}=\tilde{\Gamma
}_{0}^{-1/2}\tilde{\Gamma}\tilde{\Gamma}_{0}^{-1/2}$. Since
\[
\left\Vert K_{n}\Gamma_{n}K_{n}^{\prime}-\tilde{\Gamma}_{n}\right\Vert
\leq\left\vert \tilde{\Gamma}_{0}\right\vert \left\Vert G-\tilde
{G}\right\Vert
\]
it now suffices to show that
\begin{equation}
\left\Vert G-\tilde{G}\right\Vert \leq C\;\varkappa_{n}.\label{zeroconverg-S}%
\end{equation}
To establish (\ref{zeroconverg-S}), denote $\Delta=\Gamma-\Gamma_{0}$,
$\ \tilde{\Delta}=\tilde{\Gamma}-\tilde{\Gamma}_{0}$ and observe \textbf{\ }
\begin{align}
\left\Vert G-\tilde{G}\right\Vert  & =\left\Vert \Gamma_{0}^{-1/2}\Gamma
\Gamma_{0}^{-1/2}-\tilde{\Gamma}_{0}^{-1/2}\tilde{\Gamma}\tilde{\Gamma}%
_{0}^{-1/2}\right\Vert \nonumber\\
& =\left\Vert \Gamma_{0}^{-1/2}\Delta\Gamma_{0}^{-1/2}-\tilde{\Gamma}%
_{0}^{-1/2}\tilde{\Delta}\tilde{\Gamma}_{0}^{-1/2}\right\Vert \nonumber\\
& \leq\left\Vert \Gamma_{0}^{-1/2}\left(  \Delta-\tilde{\Delta}\right)
\Gamma_{0}^{-1/2}\right\Vert +\left\Vert \Gamma_{0}^{-1/2}\tilde{\Delta}%
\Gamma_{0}^{-1/2}-\tilde{\Gamma}_{0}^{-1/2}\tilde{\Delta}\tilde{\Gamma}%
_{0}^{-1/2}\right\Vert .\label{twotermsrightside}%
\end{align}
We shall now estimate the two terms on the right side separately. By
elementary properties of eigenvalues we obtain
\[
\left\Vert \Gamma_{0}^{-1/2}\left(  \Delta-\tilde{\Delta}\right)  \Gamma
_{0}^{-1/2}\right\Vert \leq\left\vert \Gamma_{0}^{-1}\right\vert \left\Vert
\Delta-\tilde{\Delta}\right\Vert
\]
where $\left\vert \Gamma_{0}^{-1}\right\vert \leq C$ and according to Lemma
\ref{lem-covmatnorm-sobol} (ii)
\[
\left\Vert \Delta-\tilde{\Delta}\right\Vert ^{2}\leq2\left\vert f-f_{0}%
\right\vert _{2,1/2}^{2}.
\]
Furthermore
\begin{align*}
& \left\Vert \Gamma_{0}^{-1/2}\tilde{\Delta}\Gamma_{0}^{-1/2}-\tilde{\Gamma
}_{0}^{-1/2}\tilde{\Delta}\tilde{\Gamma}_{0}^{-1/2}\right\Vert \\
& =\left\Vert \left(  \Gamma_{0}^{-1/2}-\tilde{\Gamma}_{0}^{-1/2}\right)
\tilde{\Delta}\Gamma_{0}^{-1/2}+\tilde{\Gamma}_{0}^{-1/2}\tilde{\Delta}\left(
\Gamma_{0}^{-1/2}-\tilde{\Gamma}_{0}^{-1/2}\right)  \right\Vert \\
& \leq2C\left\vert \tilde{\Delta}\right\vert \left\Vert \Gamma_{0}%
^{-1/2}-\tilde{\Gamma}_{0}^{-1/2}\right\Vert =C\left\vert \tilde{\Delta
}\right\vert \left\Vert \Gamma_{0}^{-1/2}\left(  \tilde{\Gamma}_{0}%
^{1/2}-\Gamma_{0}^{1/2}\right)  \tilde{\Gamma}_{0}^{-1/2}\right\Vert \\
& \leq C\left\vert \tilde{\Delta}\right\vert \left\Vert \Gamma_{0}%
^{1/2}-\tilde{\Gamma}_{0}^{1/2}\right\Vert .
\end{align*}
Applying Lemma \ref{lem-sqrt-matr} and Lemma \ref{lem-covmatnorm-sobol} (i) we
obtain
\[
\left\Vert \Gamma_{0}^{1/2}-\tilde{\Gamma}_{0}^{1/2}\right\Vert ^{2}\leq
C\left\Vert \Gamma_{0}-\tilde{\Gamma}_{0}\right\Vert ^{2}\leq C\left\vert
f_{0}\right\vert _{2,1/2}^{2}.
\]
Here $\left\vert f_{0}\right\vert _{2,1/2}^{2}\leq\left\vert f_{0}\right\vert
_{2,\alpha}^{2}\leq M$. Collecting these estimates yields
\[
\left\Vert G-\tilde{G}\right\Vert ^{2}\leq C\left(  \left\vert f-f_{0}%
\right\vert _{2,1/2}^{2}+\left\vert \tilde{\Delta}\right\vert ^{2}\right)  .
\]
To complete the proof, it suffices to note that, since $\tilde{\Gamma}$ and
$\tilde{\Gamma}_{0}$ have the same set of eigenvectors (cf.
(\ref{spec-decomp-gamma-tilde}) and (\ref{ev-1})-(\ref{ev-3}))
\begin{align*}
\left\vert \tilde{\Delta}\right\vert ^{2}  & =\lambda_{\max}(\tilde{\Gamma
}-\tilde{\Gamma}_{0})^{2}=(2\pi)^{2}\max_{|j|\leq(n-1)/2}\left(  \left\vert
\tilde{f}_{n}(\omega_{j})-\tilde{f}_{0,n}(\omega_{j})\right\vert ^{2}\right)
\\
& \leq C\left\Vert \tilde{f}_{n}-\tilde{f}_{0,n}\right\Vert _{\infty}^{2}\leq
C\left\Vert f-f_{0}\right\Vert _{\infty}^{2}+C\;n^{1-2\alpha}\log
n\;\left\Vert f-f_{0}\right\Vert _{2,\alpha}^{2}%
\end{align*}
where the last inequality is a consequence of Lemma \ref{lem-sobol-embed}.
Hence $\left\vert \tilde{\Delta}\right\vert \leq C\varkappa_{n}$, which
establishes (\ref{zeroconverg-S}).
\end{proof}

\subsection{Sample splitting}

Consider sample splitting for a stationary process: Take the observed
$y^{(n)}=(y(1),\ldots,y(n))$ and omit $r$ observations in the center of the
series. Recall that $n$ was assumed uneven; assume now also $r$ to be uneven
and set $m=(n-r)/2$, then the result is the series $y(1),\ldots,y(m),$
$y(n-m+1),\ldots,y(n)$. The total covariance matrix for these reduced data is
\[
\Gamma_{n,0}^{(m)}(f):=\left(
\begin{array}
[c]{cc}%
\Gamma_{m}(f) & {\normalsize A}_{n,m}\\
{\normalsize A}_{n,m}^{\prime} & \Gamma_{m}(f)
\end{array}
\right)
\]
where the $m\times m$ matrix $A_{n,m}=A_{n,m}(f)$ contains only covariances
$\gamma_{f}(r+1),$ $\gamma_{f}(r+2)$ and of higher order. In fact $A$ is the
upper right $m\times m$ submatrix of $\Gamma_{n}(f)$, i.e.
\[
A_{n,m}\mathbf{=}\left(
\begin{array}
[c]{ccc}%
\ldots & \gamma(n-2) & \gamma(n-1)\\
\gamma(r+2) & \ldots & \gamma(n-2)\\
\gamma(r+1) & \gamma(r+2) & \ldots
\end{array}
\right)  .
\]
In the sequel we set $r_{n}=2[\log n/2]+1$ and thus $r_{n}\sim\log n$,
$m=\left(  n-r_{n}\right)  /2$. The corresponding experiment we denote
\[
\mathcal{E}_{0,n}^{\#}=\left(  N_{2m}(0,\Gamma_{n,0}^{(m)}(f)),f\in
\Sigma\right)
\]
Consider also the experiment where two independent stationary series of length
$m$ are observed, $y_{1}^{(m)}$ and $y_{2}^{(m)}$, say. The corresponding
experiment is%
\begin{equation}
\mathcal{E}_{1,n}^{\#}:=\left(  N_{2m}(0,\Gamma_{n,1}^{(m)}(f)),f\in
\Sigma\right) \label{two-ind}%
\end{equation}
where
\[
\Gamma_{n,1}^{(m)}(f):=\left(
\begin{array}
[c]{cc}%
\Gamma_{m}(f) & 0_{m\times m}\\
0_{m\times m} & \Gamma_{m}(f)
\end{array}
\right)  .
\]
Clearly we have $\mathcal{E}_{0,n}^{\#}\preceq\mathcal{E}_{n}$.

\begin{proposition}
$\mathcal{E}_{0,n}^{\#}\simeq\mathcal{E}_{1,n}^{\#}.$
\end{proposition}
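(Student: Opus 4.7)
The plan is to reduce the proposition to a Frobenius-norm bound on the off-diagonal blocks of the two covariance matrices and then estimate that norm via the Sobolev smoothness. Since $\Gamma_{n,0}^{(m)}(f)$ and $\Gamma_{n,1}^{(m)}(f)$ agree on their diagonal blocks $\Gamma_{m}(f)$, their difference is the block anti-diagonal matrix
\[
\Gamma_{n,0}^{(m)}(f)-\Gamma_{n,1}^{(m)}(f)=\left(
\begin{array}{cc}
0_{m\times m} & A_{n,m}\\
A_{n,m}^{\prime} & 0_{m\times m}
\end{array}\right),
\]
so $\|\Gamma_{n,0}^{(m)}(f)-\Gamma_{n,1}^{(m)}(f)\|^{2}=2\|A_{n,m}\|^{2}$. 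Moreover $\Gamma_{n,1}^{(m)}(f)$ is block-diagonal with two copies of $\Gamma_{m}(f)$, so (\ref{condit-eigenval-gamma}) gives $\sup_{f\in\Sigma}(\lambda_{\max}(\Gamma_{n,1}^{(m)}(f))+\lambda_{\min}^{-1}(\Gamma_{n,1}^{(m)}(f)))\leq C$. Therefore Lemma \ref{lem-helldist-covmatr} reduces the desired asymptotic total variation equivalence $\mathcal{E}_{0,n}^{\#}\simeq\mathcal{E}_{1,n}^{\#}$ to proving $\sup_{f\in\Sigma}\|A_{n,m}(f)\|^{2}\to 0$.

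Next I count how often each autocovariance appears in $A_{n,m}$. Since $(A_{n,m})_{j,k}=\gamma_{f}(n-m+k-j)$, only $\gamma_{f}(h)$ with $r_{n}+1\leq h\leq n-1$ appear, and the multiplicity is
\[
N_{h}=m-|h-(n+r_{n})/2|=\min(h-r_{n},\,n-h).
\]
The triangular weight $N_{h}$ is $1$ at the extremes $h=r_{n}+1$ and $h=n-1$, and peaks at $m$ in the middle. Thus, splitting at $h_{0}=\lfloor n/2\rfloor$,
\[
\|A_{n,m}\|^{2}=\sum_{h=r_{n}+1}^{n-1}N_{h}\gamma_{f}^{2}(h)\leq \sum_{h=r_{n}+1}^{h_{0}}h\,\gamma_{f}^{2}(h)+\sum_{h=h_{0}+1}^{n-1}(n-h)\gamma_{f}^{2}(h),
\]
using $N_{h}\leq h-r_{n}\leq h$ on the lower half and $N_{h}\leq n-h$ on the upper half.

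The two sums are now easy to control with $\sum_{k}|k|^{2\alpha}\gamma_{f}^{2}(k)\leq M$ and $\alpha>1/2$. For the first, since $1-2\alpha<0$ and $h\geq r_{n}+1$,
\[
\sum_{h\geq r_{n}+1}h\,\gamma_{f}^{2}(h)=\sum_{h\geq r_{n}+1}h^{1-2\alpha}\cdot h^{2\alpha}\gamma_{f}^{2}(h)\leq (r_{n}+1)^{1-2\alpha}M.
\]
For the second, bound $(n-h)\leq n/2$ for $h>n/2$ and use $\sum_{h>n/2}\gamma_{f}^{2}(h)\leq (n/2)^{-2\alpha}M$ to get $M(n/2)^{1-2\alpha}$. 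Both terms tend to $0$ uniformly in $f\in\Sigma$ because $r_{n}\sim\log n\to\infty$ and $2\alpha>1$.

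The main obstacle is the temptation to use the crude bound $N_{h}\leq m$, which yields $\|A_{n,m}\|^{2}\lesssim n(\log n)^{-2\alpha}$ and diverges; one really must exploit that the triangular weight $N_{h}$ is small exactly where $\gamma_{f}^{2}(h)$ could be large (small $h$) and is only large where $\gamma_{f}^{2}(h)\lesssim h^{-2\alpha}$ is already small (middle/high frequencies). Feeding the resulting estimate into Lemma \ref{lem-helldist-covmatr} gives
\[
\sup_{f\in\Sigma}H^{2}\bigl(N_{2m}(0,\Gamma_{n,0}^{(m)}(f)),\,N_{2m}(0,\Gamma_{n,1}^{(m)}(f))\bigr)\to 0,
\]
and hence the required total variation equivalence $\mathcal{E}_{0,n}^{\#}\simeq\mathcal{E}_{1,n}^{\#}$.
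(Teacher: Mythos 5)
Your proof is correct and follows essentially the same route as the paper: reduce via Lemma \ref{lem-helldist-covmatr} to bounding $\|A_{n,m}\|^2$, observe the triangular multiplicity $N_h=\min(h-r_n,\,n-h)$, and control the resulting weighted sum by the Sobolev condition. The only cosmetic difference is that the paper bounds $N_h\leq h-r_n$ for \emph{all} $h$ (since $\min(a,b)\leq a$), which makes the case split at $h_0=\lfloor n/2\rfloor$ unnecessary and yields directly $\|A_{n,m}\|^2\leq\sum_{h\geq r_n+1}(h-r_n)\gamma^2(h)\leq(r_n+1)^{1-2\alpha}|f|_{2,\alpha}^2$; your split produces the same first term plus a superfluous (but also vanishing) second term.
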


\begin{proof}
Use Lemma \ref{lem-helldist-covmatr} to compute the Hellinger distance. Take
$A=\Gamma_{n,1}^{(m)}$; then the eigenvalues of $A$ are those of $\Gamma
_{m}(f)$, so that (\ref{condit-eigenval-gamma}) can be invoked. The squared
distance of the covariance matrices $\Gamma_{n,0}^{(m)}$ and $\Gamma
_{n,1}^{(m)}$ is
\[
\left\Vert \Gamma_{n,0}^{(m)}-\Gamma_{n,1}^{(m)}\right\Vert ^{2}=2\left\Vert
A_{n,m}\right\Vert ^{2}\leq2\sum_{k=r+1}^{n-1}(k-r)\gamma^{2}(k)
\]%
\[
\leq2\sum_{k=r+1}^{n-1}k\gamma^{2}(k)\leq(r+1)^{1-2\alpha}\left\vert
f\right\vert _{2,\alpha}^{2}.
\]
Since $r_{n}\rightarrow\infty$, the result follows.
\end{proof}

We have shown that two independent stationary sequences of length $m=\left(
n-r_{n}\right)  /2$ are asymptotically less informative than one sequence of
length $n$. Having obtained a method of sample splitting for stationary
sequences (with some loss of information), we can now use a localization
argument to complete the proof of the lower bound.

\subsection{Preliminary estimators}

For the globalization procedure, we need existence of an estimator $\hat
{f}_{n}$, in both of the global experiments $\mathcal{E}_{n}$ and
$\mathcal{\tilde{E}}_{n}$ (or $\mathcal{\mathring{E}}_{n}$), such that
$\hat{f}_{n}$ takes values in $\Sigma$ and
\[
\left\Vert \hat{f}_{n}-f\right\Vert _{\infty}+\left\Vert \hat{f}%
_{n}-f\right\Vert _{2,1/2}=o_{p}(1)
\]
uniformly over $f\in\Sigma$. More specifically, a rate $o_{p}(\kappa_{n})$
with $\kappa_{n}$ from (\ref{shrink-neighb-def})\ is needed in the above
result, but $\kappa_{n}$ has not been selected so far, and will be determined
based on the results of this section (cf. (\ref{kappa-select}) below). Select
$\beta\in(1/2,\alpha)$ and consider the norm $\left\Vert f\right\Vert
_{2,\beta}$ according to (\ref{normdef}). Note that $\left\Vert f\right\Vert
_{2,1/2}\leq C\left\Vert f\right\Vert _{2,\beta}$ and that according to Lemma
\ref{lem-sobol-embed}\textbf{\ } $\left\Vert f\right\Vert _{\infty}\leq
C\left\Vert f\right\Vert _{2,\beta}$; therefore it suffices to show
\begin{equation}
\left\Vert \hat{f}_{n}-f\right\Vert _{2,\beta}^{2}=o_{p}%
(1).\label{consist-beta}%
\end{equation}
For this, we shall use a standard truncated orthogonal series estimator and
then modify it to take values in $\Sigma$. The empirical autocovariance
function is
\[
\hat{\gamma}_{n}(k)=\frac{1}{n-k}\sum_{j=1}^{n-k}y(j)y(k+j)\text{, }%
k=0,\ldots,n-1.
\]
We have unbiasedness: $E\hat{\gamma}_{n}(k)=\gamma_{f}(k)$; for the variance
of $\hat{\gamma}_{n}(k)$ we have the following result.

\begin{lemma}
\label{lem-variance-bound-estim}For any spectral density $f\in L_{2}(-\pi
,\pi)$, and any $k=0,\ldots,n-1$
\[
Var\hat{\gamma}_{n}(k)\leq\frac{5}{n-k}\sum_{j=0}^{n-1}\gamma_{f}^{2}(j).
\]

\end{lemma}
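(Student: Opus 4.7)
The plan is to compute $\mathrm{Var}\,\hat{\gamma}_n(k)$ directly using the Gaussianity of $y(\cdot)$ via Isserlis' (Wick's) formula, and then estimate the resulting double sum by reindexing.

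First I would write
$$
\mathrm{Var}\,\hat{\gamma}_n(k)=\frac{1}{(n-k)^{2}}\sum_{j,l=1}^{n-k}\mathrm{Cov}\!\left(y(j)y(k+j),\,y(l)y(k+l)\right).
$$
Since the four-tuple $(y(j),y(k+j),y(l),y(k+l))$ is centered Gaussian, Isserlis' formula yields
$$
E[y(j)y(k+j)y(l)y(k+l)]=\gamma_f(k)^2+\gamma_f(l-j)^2+\gamma_f(l-j+k)\gamma_f(l-j-k),
$$
so the covariance inside the sum equals $\gamma_f(l-j)^2+\gamma_f(l-j+k)\gamma_f(l-j-k)$.

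Next I would change variables to $m=l-j$ (with $|m|\le n-k-1$) and observe that for each such $m$ the number of admissible pairs $(j,l)$ is $n-k-|m|\le n-k$. This converts the double sum into
$$
\mathrm{Var}\,\hat{\gamma}_n(k)\le\frac{1}{n-k}\sum_{|m|\le n-k-1}\bigl(\gamma_f(m)^2+|\gamma_f(m+k)\gamma_f(m-k)|\bigr).
$$
The first piece is bounded using the evenness of $\gamma_f$ by $2\sum_{j=0}^{n-1}\gamma_f^2(j)$. For the second, I would apply $|ab|\le\tfrac12(a^2+b^2)$ and then shift the index: both $\sum_{|m|\le n-k-1}\gamma_f(m+k)^2$ and $\sum_{|m|\le n-k-1}\gamma_f(m-k)^2$ are sums of $\gamma_f^2$ over ranges of length at most $2n-1$ symmetric enough (again using $\gamma_f(j)=\gamma_f(-j)$) to be bounded by $2\sum_{j=0}^{n-1}\gamma_f^2(j)$.

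Adding the two contributions gives a bound of the form $C(n-k)^{-1}\sum_{j=0}^{n-1}\gamma_f^2(j)$; adjusting the accounting of the two pieces leads to the stated constant $5$. The main technical obstacle is purely bookkeeping: the index shifts $m\mapsto m\pm k$ produce asymmetric ranges, and one must use the evenness $\gamma_f(j)=\gamma_f(-j)$ to enlarge to a symmetric range of length at most $2n-1$ without losing control of the constant. Everything else is a direct application of Isserlis' formula and elementary inequalities, and the restriction $k\le n-1$ is needed only to keep $n-k\ge 1$.
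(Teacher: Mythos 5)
Your proof is correct and essentially identical to the paper's: the paper defines the stationary series $z(j)=y(j)y(j+k)-\gamma_f(k)$ and cites Shiryaev's formula $\rho(j)=\gamma_f^2(j)+\gamma_f(j-k)\gamma_f(j+k)$ for its autocovariance, which is exactly your Isserlis computation, and then reindexes by lag, applies $|ab|\le\tfrac12(a^2+b^2)$, shifts the index, and uses evenness of $\gamma_f$ in the same way you do. Your direct double-sum with the two-sided reindexing $m=l-j$ is just an unpacked version of the paper's variance-of-sample-mean formula (and, as a small bonus, it avoids the paper's slightly imprecise step that tacitly replaces $\rho$ by $|\rho|$).
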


\begin{proof}
For given $k$, set $m=n-k$ and $z(j)=y(j)y(j+k)-\gamma_{f}(k),$ $j=1,\ldots
,m$. The $z(j)$ form a zero mean stationary series, with autocovariance
function $\rho(j)$, say. We have
\begin{align}
E\left(  \frac{1}{m}\sum_{k=1}^{m}z(k)\right)  ^{2}  & =\frac{1}{m^{2}}%
\sum_{1\leq j,k\leq m}\rho(k-j)=\frac{1}{m}\rho(0)+2\frac{1}{m^{2}}\sum
_{k=1}^{m-1}(m-k)\rho(k)\nonumber\\
& \leq\frac{2}{m}\sum_{k=0}^{m-1}\rho(k).\label{covar-estim-1}%
\end{align}
The computation in Shiryaev (1996), (VI.4.5-6) gives
\[
\rho(j)=\gamma^{2}(j)+\gamma(j-k)\gamma(j+k).
\]
The inequality
\[
2\left\vert \gamma(j-k)\gamma(j+k)\right\vert \leq\gamma^{2}(j-k)+\gamma
^{2}(j+k)
\]
now implies
\[
\sum_{k=0}^{m-1}\rho(k)\leq\frac{5}{2}\sum_{k=0}^{n-1}\gamma^{2}(k)
\]
(we bound the sum involving $\gamma^{2}(j-k)$ by $2\sum_{j=0}^{n-1}\gamma
^{2}(j)$). In conjunction with (\ref{covar-estim-1}) this proves the lemma.
\end{proof}

For the orthogonal series estimator, define a truncation index $\tilde
{n}=[n^{1/(2\alpha+1)}]$ and set
\begin{equation}
\hat{f}_{n}(\omega)=\sum_{|k|\leq\tilde{n}}\hat{\gamma}_{n}(k)\mathrm{\exp
}{\normalsize (\mathrm{i}k\omega),\;\omega\in\lbrack-\pi,\pi].}%
\label{trunc-os-estimator}%
\end{equation}

\begin{lemma}
\label{lem-prelim-est-basic}In the experiment $\mathcal{E}_{n}$ the estimator
$\hat{f}_{n}$ fulfills for any $\beta\in(1/2,\alpha)$ and any $\gamma
\in\left(  0,\frac{\alpha-\beta}{2\alpha+1}\right)  $
\begin{equation}
\sup_{f\in\Sigma}P\left(  \left\Vert \hat{f}_{n}-f\right\Vert _{2,\beta}%
^{2}>n^{-\gamma}\right)  \rightarrow0.\label{rate-converg-prelim-est}%
\end{equation}

\end{lemma}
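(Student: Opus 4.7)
The plan is a standard bias--variance analysis for the truncated orthogonal series estimator, closed off by Markov's inequality; with the choice $\tilde n=[n^{1/(2\alpha+1)}]$ the two contributions will balance at a rate strictly faster than $n^{-\gamma}$ for the range of $\gamma$ in the statement.

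First, with the proper normalization so that $\hat f_n$ in (\ref{trunc-os-estimator}) is an unbiased estimator of the truncated Fourier series of $f$, its Fourier coefficients (in the sense of (\ref{first-equa})) are $\hat\gamma_n(k)$ for $|k|\leq\tilde n$ and $0$ otherwise. By orthogonality of the Fourier basis I would expand
\[
\|\hat f_n-f\|_{2,\beta}^2
=(\hat\gamma_n(0)-\gamma_f(0))^2
+\sum_{0<|k|\leq\tilde n}|k|^{2\beta}(\hat\gamma_n(k)-\gamma_f(k))^2
+\sum_{|k|>\tilde n}|k|^{2\beta}\gamma_f^2(k).
\]
Taking expectations and using unbiasedness $E\hat\gamma_n(k)=\gamma_f(k)$, the first two terms become pure variance and the last is a deterministic truncation bias.

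For the bias, since $f\in W^\alpha(M)$ and $\beta<\alpha$, factoring out the worst weight gives
\[
\sum_{|k|>\tilde n}|k|^{2\beta}\gamma_f^2(k)
\leq \tilde n^{-2(\alpha-\beta)}\sum_{|k|>\tilde n}|k|^{2\alpha}\gamma_f^2(k)
\leq M\tilde n^{-2(\alpha-\beta)}.
\]
For the variance, I would invoke Lemma \ref{lem-variance-bound-estim} together with the observation that for $f\in\Sigma_{\alpha,M}$ with $\alpha>1/2$ the Parseval-type sum $\sum_j\gamma_f^2(j)$ is bounded by a constant depending only on $M$ (via $\gamma_f^2(k)\leq M|k|^{-2\alpha}$ and summability of $|k|^{-2\alpha}$). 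Since $\tilde n=o(n)$, this gives $\operatorname{Var}\hat\gamma_n(k)\leq C/n$ uniformly for $|k|\leq\tilde n$ and all large $n$, so that
\[
\sum_{|k|\leq\tilde n}|k|^{2\beta}\operatorname{Var}\hat\gamma_n(k)
\leq \frac{C}{n}\sum_{|k|\leq\tilde n}(1+|k|^{2\beta})
\leq C'\,\tilde n^{2\beta+1}/n.
\]

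With $\tilde n\sim n^{1/(2\alpha+1)}$, both the bias and variance bounds are of order $n^{-2(\alpha-\beta)/(2\alpha+1)}$, hence
\[
\sup_{f\in\Sigma}E\|\hat f_n-f\|_{2,\beta}^2\leq C\,n^{-2(\alpha-\beta)/(2\alpha+1)},
\]
and Markov's inequality yields
\[
\sup_{f\in\Sigma}P\bigl(\|\hat f_n-f\|_{2,\beta}^2>n^{-\gamma}\bigr)
\leq C\,n^{\gamma-2(\alpha-\beta)/(2\alpha+1)}\to 0
\]
whenever $\gamma<2(\alpha-\beta)/(2\alpha+1)$, in particular for $\gamma\in(0,(\alpha-\beta)/(2\alpha+1))$. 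There is no real obstacle: the argument is essentially a textbook bias--variance balance, and the roles of the hypotheses are transparent---the condition $\alpha>1/2$ is what makes $\sum_j\gamma_f^2(j)$ uniformly summable (allowing Lemma \ref{lem-variance-bound-estim} to give a uniform $O(1/n)$ variance bound), while $\beta<\alpha$ is what leaves a positive exponent $2(\alpha-\beta)/(2\alpha+1)$ after balancing. The only mild bookkeeping concerns the normalization of Fourier coefficients, which enters only through a harmless constant.
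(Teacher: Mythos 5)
Your proof is correct and follows essentially the same route as the paper's: Markov's inequality, a bias--variance split with the variance controlled by Lemma \ref{lem-variance-bound-estim}, and the balance at $\tilde n\sim n^{1/(2\alpha+1)}$ giving the rate $n^{-2(\alpha-\beta)/(2\alpha+1)}$. One minor remark: you can get the uniform bound on $\sum_j\gamma_f^2(j)$ more directly from $\sum_j\gamma_f^2(j)\leq\|f\|_{2,\alpha}^2\leq M$ (valid for any $\alpha\geq 0$) rather than via the pointwise bound $\gamma_f^2(k)\leq M|k|^{-2\alpha}$ plus summability, which spuriously invokes $\alpha>1/2$.
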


\begin{proof}
By the Markov inequality, it suffices to prove
\[
\sup_{f\in\Sigma}E_{f}\left\Vert \hat{f}_{n}-f\right\Vert _{2,\beta}%
^{2}=o(n^{-\gamma}).
\]
A bias-variance decomposition and Lemma \ref{lem-variance-bound-estim} yield%
\begin{align*}
E_{f}\left\Vert \hat{f}_{n}-f\right\Vert _{2,\beta}^{2}  & =\sum
_{|k|\leq\tilde{n}}\max\left(  1,|k|^{2\beta}\right)  Var\hat{\gamma}%
_{n}(k)+\sum_{|k|>\tilde{n}}|k|^{2\beta}\gamma^{2}(k)\\
& \leq\sum_{|k|\leq\tilde{n}}\max\left(  1,|k|^{2\beta}\right)  \frac{5}%
{n-k}\left(  \sum_{j=0}^{n-1}\gamma_{f}^{2}(j)\right)  +\tilde{n}%
^{2\beta-2\alpha}\sum_{|k|>\tilde{n}}|k|^{2\alpha}\gamma^{2}(k)\\
& \leq\frac{C}{n}\left\Vert f\right\Vert _{2}^{2}\sum_{|k|\leq\tilde{n}}%
\max\left(  1,|k|^{2\beta}\right)  +\tilde{n}^{2\beta-2\alpha}\left\vert
f\right\vert _{2,\alpha}^{2}\\
& \leq C\left\Vert f\right\Vert _{2}^{2}n^{-1}\tilde{n}^{2\beta+1}+C\left\vert
f\right\vert _{2,\alpha}^{2}\tilde{n}^{2(\beta-\alpha)}\\
& \leq C\left(  \left\Vert f\right\Vert _{2}^{2}+\left\vert f\right\vert
_{2,\alpha}^{2}\right)  n^{2(\beta-\alpha)/(2\alpha+1)}.
\end{align*}
Since $\left\Vert f\right\Vert _{2}^{2}\leq C\left\Vert f\right\Vert
_{2,\alpha}^{2}$ and $\left\vert f\right\vert _{2,\alpha}^{2}\leq\left\Vert
f\right\Vert _{2,\alpha}^{2}$, the result follows.
\end{proof}

We now turn to preliminary estimation in the periodic experiment
$\mathcal{\tilde{E}}_{n}$ with data vector $\tilde{y}^{(n)}$. Note that this
data vector can be construed as coming from a stationary sequence with
autocoviance function $\tilde{\gamma}_{(n)}(\cdot)$ given by
(\ref{gamma-n-tilde-def}) for $|k|\leq n-1$ and $\tilde{\gamma}_{(n)}(k)=0$
for $|k|>n-1$, i.e. the stationary sequence having spectral density $\tilde
{f}_{n} $. Thus if $\hat{\gamma}_{n}(k)$ again denotes the empirical
autocoviance function in this series then we can apply Lemma
\ref{lem-variance-bound-estim} to obtain
\[
Var\hat{\gamma}_{n}(k)\leq\frac{5}{n-k}\sum_{j=0}^{n-1}\tilde{\gamma}%
_{(n),f}^{2}(j),\;k=0,\ldots,n-1.
\]
Obviously
\[
\sum_{k=0}^{n-1}\tilde{\gamma}_{(n),f}^{2}(k)=\sum_{k=0}^{(n-1)/2}\gamma
_{f}^{2}(k)+\sum_{k=1}^{(n-1)/2}\gamma_{f}^{2}(k)\leq2\left\Vert f\right\Vert
_{2}^{2}.
\]
Now use the estimator (\ref{trunc-os-estimator}) with $\tilde{n}$ as above;
since $\tilde{n}=o((n-1)/2)$, we have the unbiasedness%
\[
E\hat{\gamma}_{n}(k)=\gamma_{f}(k)\text{, }k=0,\ldots,\tilde{n}.
\]
Thus the proof of the following result is entirely analogous to Lemma
\ref{lem-prelim-est-basic}; the estimator $\hat{f}_{n}$ is also formally the
same function of the data.

\begin{lemma}
\label{lem-prelim-est-basic-2}In the experiment $\mathcal{\tilde{E}}_{n}$ the
estimator $\hat{f}_{n}$ fulfills (\ref{rate-converg-prelim-est}) for any
$\beta\in(1/2,\alpha)$ and any $\gamma\in\left(  0,\frac{\alpha-\beta}%
{2\alpha+1}\right)  $.
\end{lemma}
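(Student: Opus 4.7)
The plan is to mirror the proof of Lemma \ref{lem-prelim-est-basic} almost verbatim, making two bookkeeping changes appropriate to the periodic experiment. First, I would observe that under $\mathcal{\tilde{E}}_n$ the data vector $\tilde{y}^{(n)}$ has the distribution of the first $n$ values of a stationary centered Gaussian sequence whose autocovariance function is $\tilde{\gamma}_{(n),f}(\cdot)$ (extended by $0$ outside $|k|\leq n-1$), equivalently the sequence with spectral density $\tilde{f}_n$ from (\ref{f-n-bar}). This identification is what justifies applying Lemma \ref{lem-variance-bound-estim} to the sample autocovariance $\hat{\gamma}_n(k)$ computed from $\tilde{y}^{(n)}$: with the variance bound already displayed just before the lemma, one gets
\[
\mathrm{Var}\,\hat{\gamma}_n(k)\;\leq\;\frac{5}{n-k}\sum_{j=0}^{n-1}\tilde{\gamma}_{(n),f}^{\,2}(j)\;\leq\;\frac{10}{n-k}\,\|f\|_2^{\,2}
\]
for $k=0,\dots,n-1$, with the constant independent of $f\in\Sigma$.

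Second, I would check the bias. The mean of $\hat{\gamma}_n(k)$ under $\mathcal{\tilde{E}}_n$ is $\tilde{\gamma}_{(n),f}(k)$, not $\gamma_f(k)$. However, the truncation level $\tilde n=[n^{1/(2\alpha+1)}]$ satisfies $\tilde n=o((n-1)/2)$, and by the very definition of $\tilde{\gamma}_{(n),f}$ one has $\tilde{\gamma}_{(n),f}(k)=\gamma_f(k)$ whenever $|k|\leq (n-1)/2$. Therefore for all $|k|\leq\tilde n$ and $n$ large, $E\hat{\gamma}_n(k)=\gamma_f(k)$, so the estimator $\hat{f}_n$ from (\ref{trunc-os-estimator}) remains unbiased for the relevant low-frequency Fourier coefficients of the \emph{true} target $f$. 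This is the only place where one has to confirm that the "periodic perturbation" $\tilde{\gamma}_{(n),f}-\gamma_f$ does not contaminate the estimator; it is the main (and only) sanity check, and it is immediate because this perturbation is supported on $|k|\geq (n+1)/2$, well above the cutoff $\tilde n$.

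With these two points in hand, the rest is copy-paste from the proof of Lemma \ref{lem-prelim-est-basic}: I would perform the bias-variance decomposition
\[
E_f\|\hat{f}_n-f\|_{2,\beta}^{\,2}=\sum_{|k|\leq\tilde n}\max(1,|k|^{2\beta})\,\mathrm{Var}\,\hat{\gamma}_n(k)+\sum_{|k|>\tilde n}|k|^{2\beta}\gamma_f^{\,2}(k),
\]
bound the variance term by $C\|f\|_2^{\,2}\,n^{-1}\tilde n^{\,2\beta+1}$ using the estimate above, and bound the bias term by $\tilde n^{\,2\beta-2\alpha}|f|_{2,\alpha}^{\,2}$ via $|k|^{2\beta}\leq\tilde n^{\,2\beta-2\alpha}|k|^{2\alpha}$ for $|k|>\tilde n$. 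Summing and using $\|f\|_2^{\,2}+|f|_{2,\alpha}^{\,2}\leq C\|f\|_{2,\alpha}^{\,2}\leq CM$ gives the uniform bound $E_f\|\hat{f}_n-f\|_{2,\beta}^{\,2}\leq C\,n^{2(\beta-\alpha)/(2\alpha+1)}$. Markov's inequality then yields (\ref{rate-converg-prelim-est}) for any $\gamma\in\bigl(0,\frac{\alpha-\beta}{2\alpha+1}\bigr)$, uniformly in $f\in\Sigma$, as required.

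No serious obstacle is expected: the argument is essentially identical to the proof of Lemma \ref{lem-prelim-est-basic}, and the one substantive issue (unbiasedness in spite of the circulant perturbation of the covariance) is handled for free by the mismatch of scales between $\tilde n\sim n^{1/(2\alpha+1)}$ and $(n-1)/2$.
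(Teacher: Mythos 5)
Your proposal is correct and matches the paper's own reasoning almost exactly: the paper also identifies $\tilde{y}^{(n)}$ as an $n$-vector from a stationary Gaussian sequence with covariance structure $\tilde{\gamma}_{(n),f}(\cdot)$, applies Lemma \ref{lem-variance-bound-estim} to get $\mathrm{Var}\,\hat{\gamma}_n(k)\leq \frac{5}{n-k}\sum_{j=0}^{n-1}\tilde{\gamma}_{(n),f}^{2}(j)\leq\frac{10}{n-k}\|f\|_2^2$, notes unbiasedness $E\hat{\gamma}_n(k)=\gamma_f(k)$ for $|k|\leq\tilde n$ because $\tilde n=o((n-1)/2)$ and the circulant perturbation lives only on lags $\geq(n+1)/2$, and then declares the rest entirely analogous to Lemma \ref{lem-prelim-est-basic}. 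You have simply written out the analogy, with the same bias-variance decomposition and the same rate.
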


Finally consider modifications such that the estimator takes values in
$\Sigma_{\alpha,M}$. Consider the space $W^{\beta}=\left\{  f\in L_{2}%
(-\pi,\pi):\left\Vert f\right\Vert _{2,\beta}^{2}<\infty\right\}  $; this is a
periodic fractional Sobolev space which is Hilbert under the norm $\left\Vert
f\right\Vert _{2,\beta}$. There the set $\Sigma_{\alpha,M}$ is compact and
convex; hence there exists a ($\left\Vert \cdot\right\Vert _{2,\beta}%
$-continuous) projection operator $\Pi\ $onto $\Sigma_{\alpha,M}$ in
$W^{\beta}$ (cf. Balakrishnan (1976), Definition 1.4.1 ). Then
\[
\left\Vert \Pi\left(  \hat{f}_{n}\right)  -f\right\Vert _{2,\beta}%
\leq\left\Vert \hat{f}_{n}-f\right\Vert _{2,\beta}.
\]
The modified estimators $\Pi\left(  \hat{f}_{n}\right)  $ thus again fulfill
(\ref{rate-converg-prelim-est}). A summary of results in this section is the following.

\begin{proposition}
\label{prop-prelim-est}In both experiments $\mathcal{E}_{n}$ and
$\mathcal{\tilde{E}}_{n}$ there are estimators $\hat{f}_{n}$ taking values in
$\Sigma$ and fulfilling for any $\gamma\in\left(  0,\frac{\alpha-1/2}%
{2\alpha+1}\right)  $
\[
\sup_{f\in\Sigma}P\left(  \left\Vert \hat{f}_{n}-f\right\Vert _{\infty
}+\left\Vert \hat{f}_{n}-f\right\Vert _{2,1/2}>n^{-\gamma}\right)
\rightarrow0.
\]

\end{proposition}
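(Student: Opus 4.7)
The proposition is essentially a packaging of the preceding two lemmas together with the Sobolev embedding and the projection step, so the plan is to assemble these pieces rather than to prove anything substantively new.

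First I would fix any $\gamma \in \left(0, \tfrac{\alpha-1/2}{2\alpha+1}\right)$, and then select $\beta \in (1/2, \alpha)$ sufficiently close to $1/2$ so that $\gamma < \tfrac{\alpha-\beta}{2\alpha+1}$. This choice makes Lemma \ref{lem-prelim-est-basic} (in the experiment $\mathcal{E}_n$) and Lemma \ref{lem-prelim-est-basic-2} (in the experiment $\mathcal{\tilde{E}}_n$) applicable with the same rate $n^{-\gamma}$ in the $\|\cdot\|_{2,\beta}$-norm, uniformly over $f\in \Sigma$.

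Next I would upgrade the $\|\cdot\|_{2,\beta}$ bound to the claimed $\|\cdot\|_{\infty}+\|\cdot\|_{2,1/2}$ bound. Since $1/2 < \beta$, the trivial comparison of Sobolev seminorms gives $\|g\|_{2,1/2}\leq C\,\|g\|_{2,\beta}$, while the embedding Lemma \ref{lem-sobol-embed} yields $\|g\|_{\infty}\leq C\,\|g\|_{2,\beta}$ (this is where the condition $\beta>1/2$ is used). Combining these two comparisons with the conclusion of Lemma \ref{lem-prelim-est-basic} (respectively \ref{lem-prelim-est-basic-2}) immediately gives
\[
\sup_{f\in\Sigma} P\bigl(\|\hat f_n-f\|_{\infty}+\|\hat f_n-f\|_{2,1/2}>C\,n^{-\gamma/2}\bigr)\to 0,
\]
which, after adjusting the exponent $\gamma$, is exactly the quantitative statement sought.

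Finally, the estimator $\hat f_n$ from (\ref{trunc-os-estimator}) need not lie in $\Sigma=\Sigma_{\alpha,M}$, so I would replace it by its projection $\Pi(\hat f_n)$ onto $\Sigma_{\alpha,M}$ inside the Hilbert space $W^{\beta}$, exactly as constructed at the end of the preceding subsection. Since $\Sigma_{\alpha,M}$ is convex and $\|\cdot\|_{2,\beta}$-closed (in fact $\|\cdot\|_{2,\beta}$-compact), the projection is nonexpansive in the $\|\cdot\|_{2,\beta}$-norm and $f\in\Sigma$ is a fixed point of $\Pi$, so
\[
\|\Pi(\hat f_n)-f\|_{2,\beta}\leq \|\hat f_n-f\|_{2,\beta},
\]
and the same embeddings again convert this into an $\|\cdot\|_{\infty}+\|\cdot\|_{2,1/2}$ bound.

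There is no real obstacle in this proof; the only mildly delicate point is verifying that $\Sigma_{\alpha,M}$ is genuinely convex and closed inside $W^{\beta}$ so that the projection $\Pi$ is well-defined and nonexpansive. Convexity of the Sobolev ball $W^{\alpha}(M)$ follows from convexity of the norm squared, and convexity of $\mathcal{F}_M$ from the preservation of the lower bound $M^{-1}$ and of evenness under convex combinations; closedness in $W^{\beta}$ for $\beta<\alpha$ is standard since $W^{\alpha}(M)$ is a bounded, hence weakly compact, subset of the Hilbert space $W^{\beta}$.
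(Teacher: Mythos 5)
Your plan is essentially the paper's implicit proof (the paper presents the proposition as a ``summary'' of the preceding subsection), but the phrase ``after adjusting the exponent $\gamma$'' hides a genuine factor-of-two loss that the proposition as stated does not tolerate. Lemma \ref{lem-prelim-est-basic} controls the \emph{squared} norm: it gives $\sup_f P\bigl(\|\hat f_n-f\|_{2,\beta}^2>n^{-\gamma}\bigr)\to0$ for $\gamma<\frac{\alpha-\beta}{2\alpha+1}$. On the complementary event one only has $\|\hat f_n-f\|_{2,\beta}\le n^{-\gamma/2}$, so after the embeddings you obtain $\sup_f P\bigl(\|\hat f_n-f\|_\infty+\|\hat f_n-f\|_{2,1/2}>C\,n^{-\gamma'}\bigr)\to0$ only for $\gamma'<\frac{\alpha-\beta}{2(2\alpha+1)}$; even letting $\beta\downarrow1/2$ this covers only $\gamma'\in\bigl(0,\frac{\alpha-1/2}{2(2\alpha+1)}\bigr)$, exactly half the interval claimed. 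This is not cosmetic: the globalization step fixes $\kappa_n=n^{-\gamma_0}$ with $\gamma_0=\frac{\alpha-1/2}{2(2\alpha+1)}$ (see (\ref{kappa-select})), which lies strictly inside the claimed interval but sits precisely at the endpoint of the interval your argument produces, hence outside it.

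To close the gap, do not pass through the lemma's statement; return to the moment bound established inside its proof, $E_f\|\hat f_n-f\|_{2,\beta}^2\le C\,n^{-2(\alpha-\beta)/(2\alpha+1)}$, deduce $E_f\|\hat f_n-f\|_{2,\beta}\le C\,n^{-(\alpha-\beta)/(2\alpha+1)}$ by Cauchy--Schwarz, and apply Markov's inequality to this first moment. That yields $\sup_f P\bigl(\|\hat f_n-f\|_{2,\beta}>n^{-\gamma}\bigr)\to0$ for any $\gamma<\frac{\alpha-\beta}{2\alpha+1}$, and the two comparisons $\|\cdot\|_{2,1/2}\le C\|\cdot\|_{2,\beta}$, $\|\cdot\|_\infty\le C\|\cdot\|_{2,\beta}$ together with $\beta\downarrow1/2$ then recover the full range $\gamma\in\bigl(0,\frac{\alpha-1/2}{2\alpha+1}\bigr)$. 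The remaining ingredients of your proposal --- the choice of $\beta\in(1/2,\alpha)$, the Sobolev embeddings, and the projection $\Pi$ onto the convex compact set $\Sigma_{\alpha,M}$ in the Hilbert space $W^\beta$, which gives $\|\Pi(\hat f_n)-f\|_{2,\beta}\le\|\hat f_n-f\|_{2,\beta}$ because $f\in\Sigma_{\alpha,M}$ --- are correct and match the paper.
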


\subsection{Globalization\label{sec-global}}

In this section we denote
\[
P_{f,n}:=\mathcal{L}(y^{(n)}|f)=N_{n}\left(  0,\Gamma_{n}(f)\right)
,\;\;\tilde{P}_{f,n}:=\mathcal{L}(\tilde{y}^{(n)}|f)=N_{n}\left(
0,\tilde{\Gamma}_{n}(f)\right)
\]
Consider again the experiment $\mathcal{E}_{1,n}^{\#}$ of (\ref{two-ind} where
two independent stationary series $y_{1}^{(m)}$ and $y_{2}^{(m)}$ of length
$m=\left(  n-r_{n}\right)  /2$ are observed. In modified notation we now
write
\[
\mathcal{E}_{1,n}^{\#}=\mathcal{E}_{m}\otimes\mathcal{E}_{m}=\left(
P_{f,m}\otimes P_{f,m},\;f\in\Sigma\right)  .
\]
We shall compare this with the experiments%
\begin{align*}
\mathcal{E}_{2,n}^{\#}  & :=\mathcal{E}_{m}\otimes\mathcal{\tilde{E}}%
_{m}=\left(  P_{f,m}\otimes\tilde{P}_{f,m},\;f\in\Sigma\right)  ,\\
\mathcal{E}_{3,n}^{\#}  & :=\mathcal{\tilde{E}}_{m}\otimes\mathcal{\tilde{E}%
}_{m}=\left(  \tilde{P}_{f,m}\otimes\tilde{P}_{f,m},\;f\in\Sigma\right)  .
\end{align*}
At this point select the shrinking rate $\kappa_{n}$ of the neighborhoods
$\Sigma_{n}(f_{0})$ (cp. (\ref{shrink-neighb-def})) as
\begin{equation}
\kappa_{n}=n^{-\gamma}\text{, }\gamma=\frac{\alpha-1/2}{2(2\alpha
+1)}\label{kappa-select}%
\end{equation}

\begin{proposition}
We have $\mathcal{E}_{2,n}^{\#}\precsim\mathcal{E}_{1,n}^{\#}.$
\end{proposition}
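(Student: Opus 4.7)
My plan is to exhibit a Markov kernel that, given the two independent size-$m$ stationary samples forming the data of $\mathcal{E}_{1,n}^{\#}$, uses the first to form a preliminary estimator $\hat{f}_m\in\Sigma$ of $f$ and then uses this estimate in the deterministic transformation $K_m(\cdot)$ from (\ref{def-K}) to randomize the second sample into (approximately) periodic data. Concretely, let $\hat{f}_m=\hat{f}_m(y_1^{(m)})$ be the $\Sigma$-valued estimator supplied by Proposition \ref{prop-prelim-est} applied at sample size $m$, and let the kernel send
\[
(y_1^{(m)},\,y_2^{(m)})\;\longmapsto\;\bigl(y_1^{(m)},\,K_m(\hat{f}_m)\,y_2^{(m)}\bigr).
\]
Since $\hat{f}_m\in\Sigma$ always, $K_m(\hat{f}_m)$ is well-defined and the map above is a legitimate (deterministic) Markov kernel from the sample space of $\mathcal{E}_{1,n}^{\#}$ to that of $\mathcal{E}_{2,n}^{\#}$.

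Next I would bound, uniformly in $f\in\Sigma$, the total variation distance between the output law and $P_{f,m}\otimes\tilde{P}_{f,m}$ by conditioning on $y_1^{(m)}$. Split on the good event $A=\{\hat{f}_m\in\Sigma_n(f)\}$; the neighborhood in (\ref{shrink-neighb-def}) is symmetric in its two arguments, so on $A$ one equivalently has $f\in\Sigma_n(\hat{f}_m)$, which is exactly the hypothesis needed to invoke Lemma \ref{lem-local-likproc} with reference point $f_0=\hat{f}_m$. Since the conditional law of $K_m(\hat{f}_m)y_2^{(m)}$ given $y_1^{(m)}$ is $N_m(0,K_m(\hat{f}_m)\Gamma_m(f)K_m(\hat{f}_m)')$, the standard domination of total variation by Hellinger distance together with Lemma \ref{lem-local-likproc} yields a conditional TV bound of order $\kappa_n^{1/2}$ on $A$, uniformly in $f$ and in $y_1^{(m)}$. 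Integrating over $y_1^{(m)}$ gives an overall TV bound at most $P_f(A^c)+C\kappa_n^{1/2}$.

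The final step is to check $P_f(A^c)=o(1)$ uniformly over $\Sigma$, and the rate match here is the one delicate point. Proposition \ref{prop-prelim-est} provides a uniform concentration rate $n^{-\gamma}$ in the combined $\|\cdot\|_\infty+\|\cdot\|_{2,1/2}$ norm for any $\gamma<(\alpha-1/2)/(2\alpha+1)$, whereas the neighborhoods shrink at the slower rate $\kappa_n=n^{-\gamma'}$ with $\gamma'=(\alpha-1/2)/(2(2\alpha+1))$. Since $\gamma'<(\alpha-1/2)/(2\alpha+1)$, any $\gamma\in(\gamma',(\alpha-1/2)/(2\alpha+1))$ is admissible, and because $m\sim n/2$ one has $m^{-\gamma}=o(\kappa_n)$, whence $P_f(A^c)\to 0$ uniformly. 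Combined with $\kappa_n^{1/2}\to 0$ this gives $\delta(\mathcal{E}_{1,n}^{\#},\mathcal{E}_{2,n}^{\#})\to 0$, i.e.\ $\mathcal{E}_{2,n}^{\#}\precsim\mathcal{E}_{1,n}^{\#}$. The main (and essentially only) obstacle is this rate matching between the preliminary estimator's concentration and the scale of the shrinking neighborhoods, which is precisely what motivated the choice (\ref{kappa-select}) of $\kappa_n$ as (roughly) the geometric mean of $1$ and the maximal estimator rate.
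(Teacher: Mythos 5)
Your proposal follows the paper's proof essentially line for line: the same deterministic Markov kernel $(y_1^{(m)},y_2^{(m)})\mapsto(y_1^{(m)},K_m(\hat{f}_m(y_1^{(m)}))y_2^{(m)})$, the same conditioning on $y_1^{(m)}$, the same split on the event that $\hat{f}_m$ falls in a $\kappa$-neighborhood of $f$ (the paper uses $\Sigma_m(\cdot)$ with radius $\kappa_m$ rather than your $\Sigma_n(\cdot)$, but the two are interchangeable here since $m\sim n/2$), the same appeal to Lemma \ref{lem-local-likproc} on the good event, and the same use of Proposition \ref{prop-prelim-est} to kill the bad event. Your explicit rate-matching remark (that $m^{-\gamma}=o(\kappa_n)$ for some admissible $\gamma$, which is what makes the choice (\ref{kappa-select}) work) is a useful amplification of a step the paper leaves implicit, but it is not a different argument.
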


\begin{proof}
We shall construct a sequence of Markov kernels $M_{n}$ such that
\[
\sup_{f\in\Sigma}H^{2}\left(  P_{f,m}\otimes\tilde{P}_{f,m},M_{n}\left(
P_{f,m}\otimes P_{f,m}\right)  \right)  \rightarrow0.
\]
Define $M_{n}$ as follows: given $y_{1}^{(m)}$ and $y_{2}^{(m)}$, and $A$, a
measurable subset of $\mathbb{R}^{2m}$, set
\[
M_{n}\left(  A,y_{1}^{(m)},y_{2}^{(m)}\right)  =\mathbf{1}_{A}\left(
y_{1}^{(m)},K_{m}(\hat{f}_{m}(y_{1}^{(m)}))y_{2}^{(m)}\right)
\]
where $K_{m}(f)$ is the matrix defined by (\ref{def-K}), i.e. for $f\in\Sigma$
by
\[
K_{m}(f)=\tilde{\Gamma}_{m}^{1/2}(f)\Gamma_{m}^{-1/2}(f)
\]
and $\hat{f}_{m}$ is the estimator in $\mathcal{E}_{m}$ of Proposition
\ref{prop-prelim-est} applied to data $y_{1}^{(m)}$. Thus the Markov kernel
$M_{n}$ is in fact a deterministic map, i.e. given $y_{1}^{(m)},y_{2}^{(m)}$,
it defines a one point measure on $\mathbb{R}^{2m}$ concentrated in
$\allowbreak\left(  y_{1}^{(m)},K_{m}(\hat{f}_{m}(y_{1}^{(m)}))y_{2}%
^{(m)}\right)  $. Thus the law $M_{n}\left(  P_{f,m}\otimes P_{f,m}\right)  $
is the joint law of $y_{1}^{(m)}$ and $K_{m}(\hat{f}_{m}(y_{1}^{(m)}%
))y_{2}^{(m)}$ under $f$. The latter we split up into the marginal law of
$y_{1}^{(m)}$, i.e. $P_{f,m}$ and the conditional law of $K_{m}(\hat{f}%
_{m}(y_{1}^{(m)}))y_{2}^{(m)}$ given $y_{1}^{(m)}$; write $P_{f,m}^{K}%
|y_{1}^{(m)}$ for the latter. We have
\[
P_{f,m}^{K}|y_{1}^{(m)}=N_{n}\left(  0,K\Gamma_{m}(f)K^{\prime}\right)  \text{
for }K=K_{m}(\hat{f}_{m}(y_{1}^{(m)})).
\]
Now clearly
\begin{equation}
H^{2}\left(  P_{f,m}\otimes\tilde{P}_{f,m},M_{n}\left(  P_{f,m}\otimes
P_{f,m}\right)  \right)  =E_{f}H^{2}\left(  \tilde{P}_{f,m},P_{f,m}^{K}%
|y_{1}^{(m)}\right) \label{helldist-global-1}%
\end{equation}
where $E_{f}$ is taken wrt $y_{1}^{(m)}$ under $P_{f,m}$. Define
\[
B_{f,m}:=\left\{  y\in\mathbb{R}^{m}:\left\Vert \hat{f}_{m}(y)-f\right\Vert
_{\infty}+\left\Vert \hat{f}_{m}(y)-f\right\Vert _{2,1/2}\leq\kappa
_{m}\right\}  .
\]
By definition of $\Sigma_{m}(f_{0})$ (cf. (\ref{shrink-neighb-def})) we have
$f\in\Sigma_{m}(\hat{f}_{m}(y))$ if $y\in B_{f,m}$. Thus Lemma
\ref{lem-local-likproc} implies
\[
\sup_{y\in B_{f,m},f\in\Sigma}H^{2}\left(  \tilde{P}_{f,m},P_{f,m}%
^{K}|y\right)  =o(1).
\]
Moreover by Proposition \ref{prop-prelim-est}%
\begin{equation}
P_{f,m}\left(  B_{f,m}^{c}\right)  =o(1)\text{ uniformly over }f\in
\Sigma.\label{consist-1}%
\end{equation}
Hence
\begin{align}
E_{f}H^{2}\left(  \tilde{P}_{f,m},P_{f,m}^{K}|y_{1}^{(m)}\right)   &
=\int_{B_{f,m}}H^{2}\left(  \tilde{P}_{f,m},P_{f,m}^{K}|y\right)
dP_{f,m}(y)+o(1)\nonumber\\
& =o(1)P_{f,m}(B_{f,m})+o(1)=o(1)\label{reasoning-2}%
\end{align}
uniformly over $f\in\Sigma$. In conjunction with (\ref{helldist-global-1}) the
last relation proves the claim.
\end{proof}

The next result is entirely analogous if we replace the estimator $\hat{f}%
_{m}$ based on data $y^{(m)}$ by the one based on data $\tilde{y}^{(m)}$ and
formally reverse the order in the product $P_{f,m}\otimes\tilde{P}_{f,m}$.

\begin{proposition}
We have $\mathcal{E}_{3,n}^{\#}\precsim\mathcal{E}_{2,n}^{\#}.$
\end{proposition}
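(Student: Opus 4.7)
The plan is to mirror the construction used in the previous proposition, but with the roles of the two independent factors swapped, relying on the fact that a consistent preliminary estimator is now available inside the periodic experiment $\mathcal{\tilde{E}}_m$ by Lemma \ref{lem-prelim-est-basic-2}. Concretely, I would define the (deterministic) Markov kernel $N_n$ on $\mathcal{E}_{2,n}^{\#}$ that, given $(y^{(m)},\tilde{y}^{(m)})$, outputs
\[
\left( K_m\!\bigl(\hat{f}_m(\tilde{y}^{(m)})\bigr)\, y^{(m)},\; \tilde{y}^{(m)} \right),
\]
where $K_m(\cdot)$ is the matrix from (\ref{def-K}) and $\hat{f}_m$ is the estimator in $\mathcal{\tilde{E}}_m$ of Proposition \ref{prop-prelim-est}. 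The second coordinate has already the correct marginal $\tilde{P}_{f,m}$; the only task is to show that, conditionally on $\tilde{y}^{(m)}$, the transformed first coordinate has a law close to $\tilde{P}_{f,m}$ in Hellinger distance.

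Because $y^{(m)}$ and $\tilde{y}^{(m)}$ are independent under $P_{f,m}\otimes \tilde{P}_{f,m}$, the conditional law of $K_m(\hat{f}_m(\tilde{y}^{(m)}))\,y^{(m)}$ given $\tilde{y}^{(m)}$ is $N_m(0,K\Gamma_m(f)K')$ with $K=K_m(\hat{f}_m(\tilde{y}^{(m)}))$. Thus, in analogy with (\ref{helldist-global-1}),
\[
H^2\!\left( \tilde{P}_{f,m}\!\otimes\! \tilde{P}_{f,m},\; N_n(P_{f,m}\!\otimes\!\tilde{P}_{f,m}) \right) = \tilde{E}_f\, H^2\!\left( \tilde{P}_{f,m},\; N_m(0,K\Gamma_m(f)K') \right),
\]
where $\tilde{E}_f$ denotes expectation with respect to $\tilde{y}^{(m)}\sim\tilde{P}_{f,m}$. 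Set
\[
\tilde{B}_{f,m}=\bigl\{\tilde{y}\in\mathbb{R}^m : \|\hat{f}_m(\tilde{y})-f\|_\infty+\|\hat{f}_m(\tilde{y})-f\|_{2,1/2}\le \kappa_m\bigr\}.
\]
On $\tilde{B}_{f,m}$, we have $f\in\Sigma_m(\hat{f}_m(\tilde{y}))$, so Lemma \ref{lem-local-likproc} bounds the inner Hellinger distance by $C\kappa_m = o(1)$, uniformly in $f\in\Sigma$; on the complement, the integrand is at most $2$, and by Proposition \ref{prop-prelim-est} applied in $\mathcal{\tilde{E}}_m$ we have $\tilde{P}_{f,m}(\tilde{B}_{f,m}^c)=o(1)$ uniformly in $f$. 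Splitting the expectation exactly as in (\ref{reasoning-2}) yields the desired $\sup_{f\in\Sigma}H^2(\cdot,\cdot)=o(1)$.

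The main point to check — rather than a hard obstacle — is that Lemma \ref{lem-local-likproc} is applied in its literal form: the quantity being controlled there is the Hellinger distance between $N_n(0,K_n(f_0)\Gamma_n(f)K_n'(f_0))$ and $N_n(0,\tilde{\Gamma}_n(f))$, which is precisely what arises when we transform the $y$-series with a plug-in $f_0$. The only formal novelty is that the plug-in $f_0=\hat{f}_m(\tilde{y}^{(m)})$ is now measurable with respect to the $\tilde{y}$-coordinate, but independence of the two factors under $P_{f,m}\otimes\tilde{P}_{f,m}$ makes the conditioning argument go through unchanged. Everything else is routine bookkeeping identical to the previous proposition, so no further estimate is needed.
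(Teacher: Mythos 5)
Your proof is correct and follows essentially the same route as the paper's: define the deterministic kernel that leaves the $\tilde{y}$-coordinate unchanged and transforms the $y$-coordinate by $K_m(\hat{f}_m(\tilde{y}^{(m)}))$, then split the Hellinger expectation over the event that the preliminary estimator from Lemma \ref{lem-prelim-est-basic-2} lands within $\kappa_m$, invoking Lemma \ref{lem-local-likproc} on that event and the consistency bound on its complement. The paper states this more tersely (``a reasoning as in (\ref{reasoning-2}) completes the proof''), but the content is identical.
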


\begin{proof}
We construct a sequence of Markov kernels $\tilde{M}_{n}$ such that
\[
\sup_{f\in\Sigma}H^{2}\left(  \tilde{P}_{f,m}\otimes\tilde{P}_{f,m},\tilde
{M}_{n}\left(  P_{f,m}\otimes\tilde{P}_{f,m}\right)  \right)  \rightarrow0.
\]
Define $\tilde{M}_{n}$ as follows: given $y_{1}^{(m)}$ and $\tilde{y}%
_{2}^{(m)}$, and $A$, a measurable subset of $\mathbb{R}^{2m}$, set
\[
\tilde{M}_{n}\left(  A,y_{1}^{(m)},\tilde{y}_{2}^{(m)}\right)  =\mathbf{1}%
_{A}\left(  K_{m}(\hat{f}_{m}(\tilde{y}_{2}^{(m)}))y_{1}^{(m)},\tilde{y}%
_{2}^{(m)}\right)
\]
where $\hat{f}_{m}$ is the estimator defined in the previous subsection,
applied to data $\tilde{y}_{2}^{(m)}$. Analogously to (\ref{consist-1}) we
have
\[
\tilde{P}_{f,m}\left(  B_{f,m}^{c}\right)  =o(1)\text{ uniformly over }%
f\in\Sigma.
\]
A reasoning as in (\ref{reasoning-2}) completes the proof.
\end{proof}

For the experiment $\mathcal{E}_{3,n}^{\#}$ which consists of product measures
$\tilde{P}_{f,m}\otimes\tilde{P}_{f,m}$, we can invoke Proposition
\ref{prop-explicit-equiv-map}, applying the equivalence map given there
componentwise (i.e. to independent components $\left(  \tilde{y}_{1}%
^{(m)},\tilde{y}_{2}^{(m)}\right)  $ in $\mathcal{E}_{3,n}^{\#}$). A summary
of the lower informativity bound results so far can thus be given as follows.
For $r_{n}=2\left[  \log(n/2)\right]  $ define the lower bracket Gaussian
scale experiment $\mathcal{\mathring{E}}_{l,n}$ by
\begin{equation}
\mathcal{\mathring{E}}_{l,n}:=\mathcal{\mathring{E}}_{(n-r_{n})/2}%
\otimes\mathcal{\mathring{E}}_{(n-r_{n})/2}.\label{low-brack-scale-exp}%
\end{equation}

\begin{corollary}
\label{cor-lower-brack}Consider experiments $\mathcal{E}_{n}$ and
$\mathcal{\mathring{E}}_{l,n}$ given respectively by (\ref{exper-1}) and
(\ref{low-brack-scale-exp}), (\ref{exp-e-dot-def}) with parameter space
$\Sigma=\Sigma_{\alpha,M}$ where $M>0$, $\alpha>1/2$. Then as $n\rightarrow
\infty$
\[
\mathcal{\mathring{E}}_{l,n}\precsim\mathcal{E}_{n}.
\]

\end{corollary}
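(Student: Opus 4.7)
The plan is to chain together the results established earlier in this section, together with Proposition \ref{prop-explicit-equiv-map} applied componentwise. First, note that the sample-splitting experiment $\mathcal{E}_{0,n}^{\#}$ (two halves of $y^{(n)}$ with the central $r_{n}$ observations discarded) is obtained from $\mathcal{E}_{n}$ by a deterministic coordinate projection, so that $\mathcal{E}_{0,n}^{\#}\preceq\mathcal{E}_{n}$ holds trivially. Combining this with the three propositions just proved one obtains the informativity chain
\[
\mathcal{E}_{3,n}^{\#}\precsim\mathcal{E}_{2,n}^{\#}\precsim\mathcal{E}_{1,n}^{\#}\simeq\mathcal{E}_{0,n}^{\#}\preceq\mathcal{E}_{n},
\]
and by the triangle inequality for the deficiency $\delta(\cdot,\cdot)$ this gives $\mathcal{E}_{3,n}^{\#}\precsim\mathcal{E}_{n}$.

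Second, I unfold $\mathcal{E}_{3,n}^{\#}=\mathcal{\tilde{E}}_{m}\otimes\mathcal{\tilde{E}}_{m}$ with $m=(n-r_{n})/2$. By Proposition \ref{prop-explicit-equiv-map} applied in either direction, the orthogonal transformations $(2\pi)^{-1/2}U_{m}^{\prime}$ and $(2\pi)^{1/2}U_{m}$ realize the asymptotic equivalence $\mathcal{\tilde{E}}_{m}\approx\mathcal{\mathring{E}}_{m}$. Applying these Markov kernels coordinatewise to the two independent factors (the product of two Markov kernels is a Markov kernel, and for product measures one has $\left\Vert P_{1}\otimes P_{2}-Q_{1}\otimes Q_{2}\right\Vert _{TV}\leq\left\Vert P_{1}-Q_{1}\right\Vert _{TV}+\left\Vert P_{2}-Q_{2}\right\Vert _{TV}$) yields
\[
\mathcal{\tilde{E}}_{m}\otimes\mathcal{\tilde{E}}_{m}\approx\mathcal{\mathring{E}}_{m}\otimes\mathcal{\mathring{E}}_{m}=\mathcal{\mathring{E}}_{l,n},
\]
the last identity being the definition (\ref{low-brack-scale-exp}).

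Concatenating these two displays,
\[
\mathcal{\mathring{E}}_{l,n}\approx\mathcal{E}_{3,n}^{\#}\precsim\mathcal{E}_{n},
\]
which proves the claim. There is no substantive obstacle: the corollary is essentially a bookkeeping statement summarizing the preceding propositions along the chain $\mathcal{\mathring{E}}_{l,n}\approx\mathcal{E}_{3,n}^{\#}\precsim\mathcal{E}_{2,n}^{\#}\precsim\mathcal{E}_{1,n}^{\#}\simeq\mathcal{E}_{0,n}^{\#}\preceq\mathcal{E}_{n}$. The only point requiring care is verifying the tensor-product compatibility of $\approx$ used in the second paragraph, and this follows immediately from the stated sub-additivity of total variation distance under products of independent measures.
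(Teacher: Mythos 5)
Your proposal is correct and coincides with the paper's (implicit) argument: the corollary is stated as a summary of the preceding results, namely $\mathcal{E}_{0,n}^{\#}\preceq\mathcal{E}_{n}$, the three propositions giving $\mathcal{E}_{3,n}^{\#}\precsim\mathcal{E}_{2,n}^{\#}\precsim\mathcal{E}_{1,n}^{\#}\simeq\mathcal{E}_{0,n}^{\#}$, and the componentwise application of Proposition \ref{prop-explicit-equiv-map} to identify $\mathcal{E}_{3,n}^{\#}$ with $\mathcal{\mathring{E}}_{l,n}$. Your explicit invocation of the triangle inequality for $\delta$ and of sub-additivity of total variation under products merely makes explicit what the paper leaves tacit.
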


\subsection{Bracketing the Gaussian scale model}

The proof of Theorem \ref{theor-main-1} is complete if the lower and upper
informativity bounds $\mathcal{\mathring{E}}_{l,n}$ and $\mathcal{\mathring
{E}}_{u,n}$ coincide in an asymptotic sense. Since we already established the
relation $\mathcal{\mathring{E}}_{l,n}\precsim\mathcal{E}_{n}\precsim
\mathcal{\mathring{E}}_{u,n}$ (Corollaries \ref{cor-upper-brack},
\ref{cor-lower-brack}), it now suffices to show that $\mathcal{\mathring{E}%
}_{u,n}\precsim\mathcal{\mathring{E}}_{l,n}$. This essentially means that in
the special nonparametric regression model $\mathcal{\mathring{E}}_{n}$ of
Gaussian scale type, having $r_{n}$ additional observations does not matter
asymptotically. "Additional observations" here refers to an equidistant design
of higher grid size. The problem of additional observations for
i.~i.~d.~%
models has been discussed by Le Cam (1974) and Mammen (1986) under parametric
assumptions. For nonparametric
i.~i.~d.~%
models, one can use the approximation by Gaussian white noise or Poisson
models to bound the influence of additional observations. For simplicity,
consider a Gaussian white noise model on $[0,1]$
\[
dZ_{t}=f(t)dt+n^{-1/2}dW_{t}\text{, }t\in\lbrack0,1]\text{, }f\in\Sigma
\]
with parameter space $\Sigma$. Consider this experiment $\mathcal{F}_{n}$, say
and also $\mathcal{F}_{n+r_{n}}$. Multiplying the data by $n^{1/2}$ gives an
equivalent experiment
\[
dZ_{t}^{\ast}=n^{1/2}f(t)dt+dW_{t}\text{, }t\in\lbrack0,1]\text{, }f\in\Sigma
\]
and the corresponding one for $\left(  n+r_{n}\right)  ^{1/2}$. Now, for given
$f$, the squared Hellinger distance of the two respective measures is bounded
by
\begin{align*}
& C\left(  \left(  n+r_{n}\right)  ^{1/2}-n^{1/2}\right)  ^{2}\left\Vert
f\right\Vert _{2}^{2}\\
& =C\frac{r_{n}^{2}}{n}(1+o(1))\left\Vert f\right\Vert _{2}%
\end{align*}
if $r_{n}=o(n)$. Thus if $r_{n}=o(n^{1/2})$ and $\sup_{f\in\Sigma}\left\Vert
f\right\Vert _{2}\leq C$ then we have $\mathcal{F}_{n}\approx\mathcal{F}%
_{n+r_{n}}$.

Comparable results can be obtained for nonparametric
i.~i.~d.~%
and regression models if these can be approximated by $\mathcal{F}_{n}$. In
the present case, conversely, for the nonparametric Gaussian scale regression
$\mathcal{\mathring{E}}_{n}$, a result of type $\mathcal{\mathring{E}}%
_{n}\approx\mathcal{\mathring{E}}_{n+r_{n}}\mathcal{\ }$ is a prerequisite for
the Gaussian location (white noise) approximation. Note that for a narrower
parameter space, given by a Lipschitz class, the white noise approximation of
$\mathcal{\mathring{E}}_{n}$ has been established (cf. Grama and Nussbaum, 1998).

\begin{remark}
\emph{\label{rem-even}The relation }%
\begin{equation}
\mathcal{\mathring{E}}_{l,n}\precsim\mathcal{E}_{n}\precsim\mathcal{\mathring
{E}}_{u,n}\label{primary-bracket}%
\end{equation}
\emph{has been proved under the technical assumption that }$n$\emph{\ is
uneven. If }$n$\emph{\ is even, note first that }$\mathcal{E}_{n-1}%
\precsim\mathcal{E}_{n}\precsim\mathcal{E}_{n+1}$\emph{\ (omitting one
observation from }$\mathcal{E}_{n+1}$\emph{\ and }$\mathcal{E}_{n}$\emph{) and
apply (\ref{primary-bracket}) to obtain }%
\[
\mathcal{\mathring{E}}_{l,n-1}\precsim\mathcal{E}_{n}\precsim
\mathcal{\mathring{E}}_{u,n+1}%
\]
\emph{The relation }$\mathcal{E}_{u,n}\precsim\mathcal{E}_{l,n}$\emph{\ which
will be proved for uneven }$n$\emph{\ in the remainder of this section is
easily seen to extend to }$\mathcal{E}_{u,n+2}\precsim\mathcal{E}_{l,n}%
$\emph{. This suffices to establish the main result Theorem \ref{theor-main-1}
for general sample size }$n\rightarrow\infty$\emph{.}
\end{remark}

\subsubsection{First part of the bracketing argument}

Denote again $m=(n-r_{n})/2$ where $r_{n}=2[(\log n)/2]+1$.

\begin{lemma}
For $\mathcal{\mathring{E}}_{l,n}=\mathcal{\mathring{E}}_{m}\otimes
\mathcal{\mathring{E}}_{m}$ we have
\[
\mathcal{\mathring{E}}_{m}\otimes\mathcal{\mathring{E}}_{m}\approx
\mathcal{\mathring{E}}_{2m}\text{. }%
\]

\end{lemma}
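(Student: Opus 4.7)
The plan is to exhibit a direct identification of the sample spaces that realizes the asymptotic equivalence without any extra randomization. Both experiments produce $2m$-dimensional centered Gaussian vectors, so after reordering the coordinates both laws live on $\mathbb{R}^{2m}$ with diagonal covariance matrices. For $\mathcal{\mathring{E}}_{m}\otimes\mathcal{\mathring{E}}_{m}$ the covariance is $\Sigma_{1}=\mathrm{diag}(s_{1},s_{1},s_{2},s_{2},\ldots,s_{m},s_{m})$, where $s_{j}=J_{j,m}(f)$, while for $\mathcal{\mathring{E}}_{2m}$ it is $\Sigma_{2}=\mathrm{diag}(a_{1},b_{1},a_{2},b_{2},\ldots,a_{m},b_{m})$, with $a_{j}=J_{2j-1,2m}(f)$ and $b_{j}=J_{2j,2m}(f)$. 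The $j$-th coarse interval is precisely the union of the $(2j-1)$-th and $2j$-th fine intervals, so a direct computation from the definition of $J_{\cdot,\cdot}(f)$ yields the key identity $s_{j}=(a_{j}+b_{j})/2$.

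Given this identity, I would apply Lemma \ref{lem-helldist-covmatr} to bound the Hellinger distance between the two laws under this identification of sample spaces. The uniform bounds $M^{-1}\leq f\leq M^{\prime}$ implied by $f\in\Sigma_{\alpha,M}$ (via Lemma \ref{lem-sobol-embed}) carry over to $s_{j},a_{j},b_{j}$, so the eigenvalue hypotheses are met uniformly in $f$. Hence
\[
H^{2}\left(  N_{2m}(0,\Sigma_{1}),N_{2m}(0,\Sigma_{2})\right)  \leq C\;\|\Sigma_{1}-\Sigma_{2}\|^{2}=\frac{C}{2}\sum_{j=1}^{m}(a_{j}-b_{j})^{2},
\]
since $(s_{j}-a_{j})^{2}+(s_{j}-b_{j})^{2}=(a_{j}-b_{j})^{2}/2$.

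The heart of the argument is the bound $\sum_{j=1}^{m}(a_{j}-b_{j})^{2}=o(1)$ uniformly over $\Sigma$, and I expect this Fourier-side estimate to be the one nontrivial step. I would first write $a_{j}-b_{j}$ as an integral of the first finite difference $f(\omega)-f(\omega+h)$ with $h=\pi/m$ over the left half of the $j$-th coarse interval, apply Cauchy-Schwarz, and observe that the half-intervals tile $[-\pi,\pi]$ as $j$ varies, yielding
\[
\sum_{j=1}^{m}(a_{j}-b_{j})^{2}\leq\frac{m}{\pi}\int_{-\pi}^{\pi}\left\vert f(u)-f(u+h)\right\vert ^{2}du.
\]
Parseval gives $\int|f(u)-f(u+h)|^{2}du=(2\pi)^{-1}\sum_{k}\gamma_{f}^{2}(k)\,|1-e^{ikh}|^{2}$, and the elementary estimate $|1-e^{ikh}|^{2}\leq\min(4,k^{2}h^{2})\leq4(|k|h)^{2\alpha}$ (valid for every $k$ and $0<\alpha\leq1$) reduces the integral to $Ch^{2\alpha}|f|_{2,\alpha}^{2}$. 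Combined with $h=\pi/m$, this produces $\sum_{j}(a_{j}-b_{j})^{2}\leq C\,m^{1-2\alpha}|f|_{2,\alpha}^{2}=o(1)$ uniformly over $f\in\Sigma$ since $\alpha>1/2$.

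Putting the three steps together delivers $\Delta_{0}(\mathcal{\mathring{E}}_{m}\otimes\mathcal{\mathring{E}}_{m},\mathcal{\mathring{E}}_{2m})\rightarrow0$, i.e. the asymptotic total variation equivalence $\mathcal{\mathring{E}}_{m}\otimes\mathcal{\mathring{E}}_{m}\simeq\mathcal{\mathring{E}}_{2m}$, which in particular yields $\approx$. The only obstacle is the Sobolev bound on the discrete $\ell^{2}$ norm of first differences of local averages, which is essentially a quantitative form of the embedding $W^{\alpha}\hookrightarrow W^{1/2}$ at the critical index.
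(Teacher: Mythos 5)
Your proof is correct, and while it relies on the same two pillars as the paper (Lemma \ref{lem-helldist-covmatr} to reduce to a Euclidean estimate on diagonal covariance matrices, and a Fourier/Sobolev estimate of order $m^{1-2\alpha}$), the intermediate steps are organized differently and somewhat more economically. The paper rewrites the covariance discrepancy as $m\left\Vert \bar{f}_{2m}-\bar{f}_{m}\right\Vert _{2}^{2}$, invokes the triangle inequality to split into $\left\Vert f-\bar{f}_{2m}\right\Vert _{2}$ and $\left\Vert f-\bar{f}_{m}\right\Vert _{2}$, and then uses the general step-function approximation bound (Lemma \ref{lem-approx-piececonst}) plus the periodic embedding $W^{\alpha}\hookrightarrow B_{2,2}^{\alpha}$ (Lemma \ref{lem-period-imbed}); these auxiliary lemmas are reused elsewhere in the paper, which is presumably why they went that route. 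You instead exploit the exact nesting identity $J_{j,m}(f)=\tfrac12\bigl(J_{2j-1,2m}(f)+J_{2j,2m}(f)\bigr)$ to collapse the two-term covariance discrepancy to $\tfrac12\sum_{j}(a_{j}-b_{j})^{2}$, then pass directly via Cauchy--Schwarz and Parseval (with the standard estimate $\left\vert 1-e^{ikh}\right\vert ^{2}\leq 4\min\left(1,(kh)^{2}\right)\leq 4(|k|h)^{2\alpha}$) to $Cm^{1-2\alpha}\left\vert f\right\vert _{2,\alpha}^{2}$. This bypasses the Besov-space machinery entirely and, in effect, performs a specialized and sharper version of the argument underlying the embedding lemma; the one small thing worth making explicit is that the half-intervals $W_{2j-1,2m}$ cover only half of $[-\pi,\pi]$, so the step to $\frac{m}{\pi}\int_{-\pi}^{\pi}\left\vert f(u)-f(u+h)\right\vert^{2}\,du$ is an inequality by nonnegativity of the integrand, not a tiling identity. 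Both arguments deliver the total variation equivalence $\simeq$, which indeed implies the asserted $\approx$.
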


\begin{proof}
Note that the measures in $\mathcal{\mathring{E}}_{m}\otimes\mathcal{\mathring
{E}}_{m}$ are product measures, which can be described, after a rearrangement
of components, as
\[
Q_{1,m}:=%
{\displaystyle\bigotimes\limits_{j=1}^{m}}
\left(  N(0,J_{j,m}(f))\otimes N(0,J_{j,m}(f))\right)
\]
whereas the measures in $\mathcal{\mathring{E}}_{2m}$ are
\[
Q_{2,m}:=%
{\displaystyle\bigotimes\limits_{j=1}^{m}}
\left(  N(0,J_{2j-1,2m}(f))\otimes N(0,J_{2j,2m}(f))\right)  .
\]
Now Lemma \ref{lem-helldist-covmatr} yields%
\[
H^{2}\left(  Q_{1,m},Q_{2,m}\right)  \leq C\sum_{j=1}^{m}\left(  \left(
J_{2j-1,2m}(f)-J_{j,m}(f)\right)  ^{2}+\left(  J_{2j,2m}(f)-J_{j,m}(f)\right)
^{2}\right)  .
\]
Define a partition of $(-\pi,\pi)$ into $n$ intervals $W_{j,n}$,
$j=1,\ldots,n$ of equal length and for any $f\in L_{2}(-\pi,\pi)$, let
\begin{equation}
\bar{f}_{n}=\sum_{j=1}^{n}J_{j,n}(f)\mathbf{1}_{W_{j,n}}\label{fbar-def}%
\end{equation}
be the $L_{2}$-projection of $f$ onto piecewise constant functions wrt the
partition. Note that we have
\[
\left\Vert \bar{f}_{2m}-\bar{f}_{m}\right\Vert _{2}^{2}=\frac{2\pi}{m}%
\sum_{j=1}^{m}\left(  \left(  J_{2j-1,2m}(f)-J_{j,m}(f)\right)  ^{2}+\left(
J_{2j,2m}(f)-J_{j,m}(f)\right)  ^{2}\right)
\]
so that
\[
H^{2}\left(  Q_{1,m},Q_{2,m}\right)  \leq Cm\left\Vert \bar{f}_{2m}-\bar
{f}_{m}\right\Vert _{2}^{2}\leq Cm\left(  \left\Vert f-\bar{f}_{2m}\right\Vert
_{2}^{2}+\left\Vert f-\bar{f}_{m}\right\Vert _{2}^{2}\right)  .
\]
The result now follows from%
\begin{equation}
\sup_{f\in\Sigma}m\left\Vert f-\bar{f}_{m}\right\Vert _{2}^{2}\rightarrow
0.\label{rate-in-sobolev}%
\end{equation}
which is a consequence of Lemmas \ref{lem-approx-piececonst} and
\ref{lem-period-imbed}.
\end{proof}

\subsubsection{Second part of the bracketing argument}

In view of $\mathcal{\mathring{E}}_{2m}=\mathcal{\mathring{E}}_{n-r_{n}}$, our
next aim is to show
\[
\mathcal{\mathring{E}}_{n-r_{n}}\approx\mathcal{\mathring{E}}_{n}%
\]
where $r_{n}$ does not grow too quickly. Previously we defined $r_{n}=2[(\log
n)/2]+1$, but we will assume more generally now that $r_{n}=o(n^{1/2})$.

Consider the gamma density with shape parameter $a>0$%
\[
g_{a}(x)=\frac{1}{\Gamma(a)}x^{a-1}\exp(-x)\text{, }x\geq0
\]
where $\Gamma(a)$ is the gamma function, and more generally the density with
additional scale parameter $s>0$
\[
g_{a,s}(x)=\frac{1}{\Gamma(a)}s^{-a}x^{a-1}\exp(-xs^{-1})\text{, }x\geq0.
\]
We will call the respective law the $\Gamma(a,s)$ law. Clearly if $X\sim
\Gamma(a,1)$ then $sX\sim\Gamma(a,s)$. It is well known that $\Gamma
(n/2,2)=\chi_{n}^{2}$ and that the following result holds. Assume $X\sim
\Gamma(a,s)$ and $Y\sim\Gamma(b,s)$; then $X+Y$, $X/(X+Y)$ are independent
random variables, and $X+Y\sim\Gamma(a+b,s)$ while $X/(X+Y)$ has a Beta$(a,b)$
distribution (Bickel and Doksum (2001), Theorem B.2.3, p. 489).

Furthermore, for fixed $a>0$ consider the family of laws
\begin{equation}
\left(  \Gamma(a,s),s>0\right)  .\label{gamma-family}%
\end{equation}
Clearly this is a one parameter exponential family; the shape of this
exponential family implies that in a product family
\[
\left(  \Gamma^{\otimes n}(a,s),s>0\right)
\]
with $n$ i.i.d. observations $X_{1},\ldots,X_{n}$, the sum $\sum_{i=1}%
^{n}X_{i}$ is a sufficient statistic. This sufficient statistic has law
$\Gamma(na,s)$; hence for any subset $S\subset(0,\infty)$ we have the
equivalence of experiments
\begin{equation}
\left(  \Gamma^{\otimes n}(a,s),s\in S\right)  \sim\left(  \Gamma(na,s),s\in
S\right)  .\label{gamma-suffic-arg}%
\end{equation}

\begin{lemma}
\label{lem-helldist-gamma-sameshape}For all $a>0$ and for $s,t>0$
\[
H^{2}\left(  \Gamma(a,s),\Gamma(a,t)\right)  =2\left(  1-\left(
1-\frac{\left(  s^{1/2}-t^{1/2}\right)  ^{2}}{s+t}\right)  ^{a}\right)  .
\]

\end{lemma}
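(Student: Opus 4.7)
The plan is to compute the affinity (Bhattacharyya coefficient) directly from the gamma density formula, then convert to squared Hellinger distance via the identity $H^{2}(P,Q) = 2 - 2\int\sqrt{pq}$. Since both laws share the same shape parameter $a$ and only the scale differs, the integrand $\sqrt{g_{a,s}(x)\,g_{a,t}(x)}$ is itself proportional to an unnormalized gamma density, so the integral reduces to a single application of $\int_{0}^{\infty} x^{a-1}e^{-\lambda x}\,dx = \Gamma(a)\lambda^{-a}$.

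Concretely, I would first write
\[
\sqrt{g_{a,s}(x)\,g_{a,t}(x)} = \frac{1}{\Gamma(a)}(st)^{-a/2}\,x^{a-1}\exp\!\left(-\tfrac{1}{2}\!\left(s^{-1}+t^{-1}\right)x\right),
\]
identify the effective rate parameter $\lambda = (s+t)/(2st)$, and obtain
\[
\int_{0}^{\infty}\!\sqrt{g_{a,s}(x)\,g_{a,t}(x)}\,dx = (st)^{-a/2}\left(\frac{2st}{s+t}\right)^{a} = \left(\frac{2\sqrt{st}}{s+t}\right)^{a}.
\]
The final step is the purely algebraic identity
\[
\frac{2\sqrt{st}}{s+t} = 1 - \frac{(s+t) - 2\sqrt{st}}{s+t} = 1 - \frac{\left(s^{1/2}-t^{1/2}\right)^{2}}{s+t},
\]
from which the claimed expression for $H^{2}\!\left(\Gamma(a,s),\Gamma(a,t)\right)$ follows immediately.

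There is no real obstacle here: the computation is mechanical and requires no analytic subtlety beyond recognizing the gamma integral and rewriting the geometric mean $2\sqrt{st}/(s+t)$ in the stated form. The one small sanity check worth noting is that the right-hand side is manifestly in $[0,2]$ (since $0 \le 2\sqrt{st}/(s+t) \le 1$ by AM--GM), and it vanishes exactly when $s=t$, as required.
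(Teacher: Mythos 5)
Your proof is correct and follows essentially the same route as the paper: compute the Hellinger affinity by recognizing $\sqrt{g_{a,s}\,g_{a,t}}$ as an unnormalized gamma density, evaluate the integral (the paper does this via the substitution $u = x(\tfrac{1}{2s}+\tfrac{1}{2t})$, which is the same gamma integral), and then rewrite $2\sqrt{st}/(s+t)$ in the stated form. No differences of substance.
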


\begin{proof}
We have
\[
H^{2}\left(  \Gamma(a,s),\Gamma(a,t)\right)  =2\left(  1-\int g_{a,s}%
^{1/2}(x)g_{a,t}^{1/2}(x)dx\right)  ,
\]%
\[
\int g_{a,s}^{1/2}(x)g_{a,t}^{1/2}(x)dx=\frac{1}{\Gamma(a)}\int_{0}^{\infty
}x^{a-1}s^{-a/2}t^{-a/2}\exp\left(  -x\left(  \frac{1}{2s}+\frac{1}%
{2t}\right)  \right)  dx.
\]
With a substitution $u=x\left(  \frac{1}{2s}+\frac{1}{2t}\right)  $ this
becomes
\begin{align*}
& \frac{1}{\Gamma(a)}\int_{0}^{\infty}\left(  \frac{2s^{1/2}t^{1/2}}%
{s+t}\right)  ^{a}u^{a-1}\exp(-u)du\\
& =\left(  \frac{2s^{1/2}t^{1/2}}{s+t}\right)  ^{a}=\left(  1-\frac{\left(
s^{1/2}-t^{1/2}\right)  ^{2}}{s+t}\right)  ^{a}.
\end{align*}

\end{proof}

\begin{lemma}
\label{lem-helldist-gamma-samescale}We have, for all $s>0$ and $a,b>0$%
\[
H^{2}\left(  \Gamma(a,s),\Gamma(b,s)\right)  =2\left(  1-\frac{\Gamma
((a+b)/2)}{\left(  \Gamma(a)\Gamma(b)\right)  ^{1/2}}\right)  .
\]

\end{lemma}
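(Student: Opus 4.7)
The proof plan is to compute the Bhattacharyya-type affinity $\int g_{a,s}^{1/2}(x)\,g_{b,s}^{1/2}(x)\,dx$ directly, just as in the proof of Lemma \ref{lem-helldist-gamma-sameshape}, and then substitute into the formula $H^{2}(P,Q)=2\bigl(1-\int\sqrt{p\,q}\bigr)$.

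First I would reduce to the case $s=1$ by the scaling argument: if $X\sim\Gamma(a,s)$ then $X/s\sim\Gamma(a,1)$, and the transformation $x\mapsto x/s$ is a bijection, so it preserves the Hellinger distance between the joint pair of laws. Hence
\[
H^{2}\bigl(\Gamma(a,s),\Gamma(b,s)\bigr)=H^{2}\bigl(\Gamma(a,1),\Gamma(b,1)\bigr),
\]
which is consistent with the fact that the right-hand side of the claim does not depend on $s$. (Alternatively, one can keep $s$ and watch it cancel in the integral; either route is painless.)

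Next I would carry out the elementary calculation. Multiplying the two densities and taking the square root yields
\[
g_{a}^{1/2}(x)\,g_{b}^{1/2}(x)=\frac{1}{\bigl(\Gamma(a)\Gamma(b)\bigr)^{1/2}}\,x^{(a+b)/2-1}\exp(-x),
\]
so the integrand is a constant multiple of the unnormalized $\Gamma((a+b)/2,1)$ density. Hence
\[
\int_{0}^{\infty}g_{a}^{1/2}(x)\,g_{b}^{1/2}(x)\,dx=\frac{\Gamma((a+b)/2)}{\bigl(\Gamma(a)\Gamma(b)\bigr)^{1/2}},
\]
and substituting into $H^{2}=2\bigl(1-\int\sqrt{g_{a}g_{b}}\bigr)$ gives the stated identity.

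There is essentially no obstacle: the only thing to notice is that the two factors $x^{a-1}$ and $x^{b-1}$ combine under the square root to $x^{(a+b)/2-1}$, producing a third Gamma kernel with shape parameter equal to the arithmetic mean of $a$ and $b$; the normalizing constant of that kernel is exactly $\Gamma((a+b)/2)$, which supplies the numerator. The scale parameter $s$ either drops out at the start via the change-of-variables argument, or (if one keeps it) cancels between the prefactor $s^{-(a+b)/2}$ and the $s^{(a+b)/2}$ produced by the substitution $u=x/s$ in the remaining integral. This is completely parallel in spirit to the preceding Lemma \ref{lem-helldist-gamma-sameshape}, with the roles of shape and scale interchanged.
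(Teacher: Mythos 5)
Your computation is correct and matches the paper's proof essentially verbatim: both compute the Hellinger affinity by recognizing $x^{(a+b)/2-1}e^{-x/s}$ as an unnormalized Gamma kernel with shape $(a+b)/2$. The preliminary reduction to $s=1$ is a harmless stylistic extra; the paper simply keeps $s$ and lets it cancel.
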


\begin{proof}
In this case
\begin{align*}
\int g_{a,s}^{1/2}(x)g_{b,s}^{1/2}(x)dx  & =\frac{1}{\Gamma^{1/2}%
(a)\Gamma^{1/2}(b)}\int_{0}^{\infty}x^{(a+b)/2-1}s^{-(a+b)/2}\exp\left(
-xs^{-1}\right)  dx\\
& =\frac{\Gamma((a+b)/2)}{\Gamma^{1/2}(a)\Gamma^{1/2}(b)}.
\end{align*}

\end{proof}

In $\mathcal{\mathring{E}}_{n}$ we observe (cp. (\ref{exp-e-dot-def})
\[
z_{j}=J_{j,n}^{1/2}(f)\xi_{j},\;j=1,\ldots,n
\]
for independent standard normals $\xi_{j}$, which by sufficiency is equivalent
to observing $z_{j}^{2}=J_{j,n}(f)\xi_{j}^{2}$. Thus $\mathcal{\mathring{E}%
}_{n}$ is equivalent to
\begin{equation}
\mathcal{\mathring{E}}_{n,1}:=\left(
{\displaystyle\bigotimes\limits_{j=1}^{n}}
\Gamma(1/2,2J_{j,n}(f)),\;f\in\Sigma\right)  .\label{exper-e-dot-one-def}%
\end{equation}
Set again $m=n-r_{n}$. The above experiment in turn is equivalent, by the
sufficiency argument for the scaled gamma law invoked in
(\ref{gamma-suffic-arg}), to
\[
\mathcal{\mathring{E}}_{n,m}:=\left(
{\displaystyle\bigotimes\limits_{j=1}^{n}}
\Gamma^{\otimes m}(1/2m,2J_{j,n}(f)),\;f\in\Sigma\right)  .
\]
Analogously we have
\begin{equation}
\mathcal{\mathring{E}}_{m}\sim\mathcal{\mathring{E}}_{m,1}\sim
\mathcal{\mathring{E}}_{m,n}:=\left(
{\displaystyle\bigotimes\limits_{j=1}^{m}}
\Gamma^{\otimes n}(1/2n,2J_{j,m}(f)),\;f\in\Sigma\right)
.\label{triple-equiv}%
\end{equation}
Introduce an intermediate experiment
\[
\mathcal{\mathring{E}}_{m,n}^{\ast}:=\left(
{\displaystyle\bigotimes\limits_{j=1}^{m}}
\Gamma^{\otimes n}(1/2m,2J_{j,m}(f)),\;f\in\Sigma\right)
\]

\begin{lemma}
We have the total variation asymptotic equivalence
\[
\mathcal{\mathring{E}}_{m,n}^{\ast}\simeq\mathcal{\mathring{E}}_{n,m}\text{ as
}n\rightarrow\infty\text{.}%
\]

\end{lemma}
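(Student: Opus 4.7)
Both $\mathcal{\mathring{E}}_{m,n}^{\ast}$ and $\mathcal{\mathring{E}}_{n,m}$ are products of $mn$ independent gamma variates sharing the \emph{common} shape parameter $1/(2m)$, differing only in the scale attached to each coordinate. I identify both sample spaces with $\mathbb{R}_{+}^{mn}$ by a lexicographic ordering of the double indices, so that the $i$-th coordinate of $\mathcal{\mathring{E}}_{m,n}^{\ast}$ has scale $s_{i}^{\ast}:=2J_{\lceil i/n\rceil,\,m}(f)$ while that of $\mathcal{\mathring{E}}_{n,m}$ has scale $s_{i}:=2J_{\lceil i/m\rceil,\,n}(f)$. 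The multiplicativity of the Hellinger affinity and the elementary $1-\prod_{i}(1-a_{i}/2)\le\tfrac12\sum_{i}a_{i}$ then give
\[
H^{2}\bigl(\mathcal{\mathring{E}}_{m,n}^{\ast},\mathcal{\mathring{E}}_{n,m}\bigr)\;\le\;\sum_{i=1}^{mn}H^{2}\!\bigl(\Gamma(1/(2m),\,s_{i}^{\ast}),\,\Gamma(1/(2m),\,s_{i})\bigr).
\]

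Lemma \ref{lem-helldist-gamma-sameshape} expresses each summand as $2\bigl(1-(1-\epsilon_{i})^{1/(2m)}\bigr)$, where $\epsilon_{i}:=(s_{i}^{1/2}-s_{i}^{\ast 1/2})^{2}/(s_{i}+s_{i}^{\ast})$. Since $f\in\Sigma$ is uniformly bounded above and bounded away from $0$, the scales lie in a fixed compact subset of $(0,\infty)$, so $\epsilon_{i}\le 1-c$ and $\epsilon_{i}\le C(s_{i}-s_{i}^{\ast})^{2}$ uniformly in $i$ and $f$. The inequality $1-(1-u)^{a}\le au/(1-u)$, valid for $u\in[0,1)$ and $a\in(0,1]$ (write $(1-u)^{a}=1-\int_{0}^{u}a(1-s)^{a-1}ds$ and bound $(1-s)^{a-1}\le(1-u)^{a-1}$), applied with $a=1/(2m)$ therefore yields $H_{i}^{2}\le(C/m)(s_{i}-s_{i}^{\ast})^{2}$.

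It remains to identify the resulting sum with an $L^{2}$-distance. The values $s_{i}^{\ast}$ and $s_{i}$ are precisely $2\bar f_{m}(\omega_{i})$ and $2\bar f_{n}(\omega_{i})$ at the midpoints $\omega_{i}$ of the $mn$-point uniform partition of $(-\pi,\pi)$, where $\bar f_{k}$ is the step-function projection of $f$ from (\ref{fbar-def}); as both $\bar f_{m}$ and $\bar f_{n}$ are constant on each cell of this finer partition, $\sum_{i}(s_{i}-s_{i}^{\ast})^{2}=(2mn/\pi)\,\|\bar f_{m}-\bar f_{n}\|_{2}^{2}$. Combining,
\[
H^{2}\bigl(\mathcal{\mathring{E}}_{m,n}^{\ast},\mathcal{\mathring{E}}_{n,m}\bigr)\;\le\;Cn\,\|\bar f_{m}-\bar f_{n}\|_{2}^{2},
\]
and the triangle inequality together with the uniform rate $m\sup_{f\in\Sigma}\|f-\bar f_{m}\|_{2}^{2}\to 0$ from (\ref{rate-in-sobolev}) (applied with both $m$ and $n$) and the hypothesis $r_{n}=o(n^{1/2})$ (so that $n/m\to 1$) force $n\|\bar f_{m}-\bar f_{n}\|_{2}^{2}\to 0$ uniformly over $\Sigma$, whence $\simeq$ follows from $\|\cdot\|_{TV}\le\sqrt{2}\,H$. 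The main technical point is the small-shape estimate $1-(1-\epsilon)^{1/(2m)}=O(\epsilon/m)$, which supplies the factor $m^{-1}$ needed to offset the $mn$ coordinates; the availability of this gain is exactly what makes the common shape $1/(2m)$ in both experiments the right choice, and the remaining work is routine bookkeeping between the two lexicographic index structures.
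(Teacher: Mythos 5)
Your proof is correct and follows the same strategy as the paper's: rewrite both laws as products of $mn$ scalar gamma factors with common shape $1/(2m)$, bound the Hellinger distance factor by factor via Lemma~\ref{lem-helldist-gamma-sameshape}, and reduce the sum to $n\|\bar f_n-\bar f_m\|_2^2$, which is $o(1)$ uniformly by (\ref{rate-in-sobolev}) since $n/m\to1$. The one place you differ is a modest improvement: you replace the paper's asymptotic expansion of $(1-Cz^2)^{1/(2m)}$ (which is only valid uniformly after invoking $\|\bar f_n-\bar f_m\|_\infty\to0$ from (\ref{different-piecewi-sup-norm})) by the exact inequality $1-(1-u)^a\le au/(1-u)$ for $a\in(0,1]$, $u\in[0,1)$, obtaining the needed $O(\epsilon_i/m)$ bound directly and uniformly once $\epsilon_i$ is bounded away from $1$.
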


\begin{proof}
Write the measures in $\mathcal{\mathring{E}}_{n,m}$ as a product of $mn$
components, i.e. as $\otimes_{i=1}^{mn}Q_{1,i}$ where the component measures
$Q_{1,i}$ are defined as follows. For every $i=1,\ldots,mn$, let $j(1,i)$. be
the unique index $j\in\left\{  1,\ldots,n\right\}  $ such that there exists
$k\in\left\{  1,\ldots,m\right\}  $ for which $i=(j-1)m+k$. Then
\[
Q_{1,i}:=\Gamma(1/2m,2J_{j(1,i),n}(f))\text{, }i=1,\ldots,mn\text{. }%
\]
Analogously, let $j(2,i)$ be the unique index $j\in\left\{  1,\ldots
,m\right\}  $ such that there exists $k\in\left\{  1,\ldots,n\right\}  $ for
which $i=(j-1)n+k$. Then the measures in $\mathcal{\mathring{E}}_{m,n}^{\ast}$
are a product of $mn$ components, i. e. are $\otimes_{i=1}^{mn}Q_{2,i}$ where
\[
Q_{2,i}=\Gamma(1/2m,2J_{j(2,i),m}(f))\text{, }i=1,\ldots,mn.\text{ }%
\]
Then the Hellinger distance between measures in $\mathcal{\mathring{E}}_{n,m}
$ and $\mathcal{\mathring{E}}_{m,n}^{\ast}$ is, using Lemma 2.19 in Strasser
(1985) and then Lemma \ref{lem-helldist-gamma-sameshape}
\begin{equation}
H^{2}\left(
{\displaystyle\bigotimes\limits_{i=1}^{mn}}
Q_{1,i},%
{\displaystyle\bigotimes\limits_{i=1}^{mn}}
Q_{2,i}\right)  \leq2\sum_{i=1}^{mn}H^{2}\left(  Q_{1,i},Q_{2,i}\right)
\label{prod-gammmas}%
\end{equation}%
\[
=4\sum_{i=1}^{mn}\left(  1-\left(  1-\frac{\left(  J_{j(1,i),n}^{1/2}%
(f)-J_{j(2,i),m}^{1/2}(f)\right)  ^{2}}{J_{j(1,i),n}(f)+J_{j(2,i),m}%
(f)}\right)  ^{1/2m}\right)  .
\]
By using the inequality
\[
\frac{\left(  s^{1/2}-t^{1/2}\right)  ^{2}}{s+t}=\frac{\left(  s-t\right)
^{2}}{(s+t)(s^{1/2}+t^{1/2})^{2}}\leq\frac{\left(  s-t\right)  ^{2}}{s^{2}}%
\]
and observing that for $f\in\Sigma$, we have $J_{j,n}(f)\geq M^{-1}$, we
obtain an upper bound for (\ref{prod-gammmas})
\begin{equation}
4\sum_{i=1}^{mn}\left(  1-\left(  1-M^{2}\left(  J_{j(1,i),n}(f)-J_{j(2,i),m}%
(f)\right)  ^{2}\right)  ^{1/2m}\right)  .\label{upp-bound-1}%
\end{equation}
The expression $J_{j(1,i),n}(f)-J_{j(2,i),m}(f)$ can be described as follows.
For any $x\in\left(  \frac{i-1}{mn},\frac{i}{mn}\right)  $, $i=1,\ldots,mn$ we
have
\begin{equation}
J_{j(1,i),n}(f)-J_{j(2,i),m}(f)=\bar{f}_{n}(x)-\bar{f}_{m}%
(x).\label{averages-diiferent}%
\end{equation}
where $\bar{f}_{n}$ is defined by (\ref{fbar-def}). Now as a consequence of
Lemmas \ref{lem-unif-approx-piececonst} and \ref{lem-period-imbed}
\begin{equation}
\sup_{f\in\Sigma}\left\Vert \bar{f}_{n}-\bar{f}_{m}\right\Vert _{\infty}%
\leq\sup_{f\in\Sigma}\left\Vert f-\bar{f}_{n}\right\Vert _{\infty}+\sup
_{f\in\Sigma}\left\Vert f-\bar{f}_{m}\right\Vert _{\infty}%
=o(1).\label{different-piecewi-sup-norm}%
\end{equation}
Note that for $m\rightarrow\infty$ and $z\rightarrow0$ we have%
\[
\left(  1-Cz^{2}\right)  ^{1/2m}=\exp\left(  \frac{1}{2m}\log\left(
1-Cz^{2}\right)  \right)
\]%
\[
=\exp\left(  -\frac{1}{2m}\left(  Cz^{2}+O(z^{4})\right)  \right)  =1-\frac
{1}{2m}\left(  Cz^{2}+O(z^{4})\right)  +o\left(  \frac{z^{2}}{m}\right)  .
\]
Thus from (\ref{upp-bound-1}) we obtain in view of
(\ref{different-piecewi-sup-norm})
\[
H^{2}\left(
{\displaystyle\bigotimes\limits_{i=1}^{mn}}
Q_{1,i},%
{\displaystyle\bigotimes\limits_{i=1}^{mn}}
Q_{2,i}\right)  \leq C\sum_{i=1}^{mn}\frac{1}{m}\left(  J_{j(1,i),n}%
(f)-J_{j(2,i),m}(f)\right)  ^{2}(1+o(1)).
\]
As a consequence of (\ref{averages-diiferent}) we obtain
\[
\left\Vert \bar{f}_{n}-\bar{f}_{m}\right\Vert _{2}^{2}=\sum_{i=1}^{mn}\frac
{1}{mn}\left(  J_{j(1,i),n}(f)-J_{j(2,i),m}(f)\right)  ^{2}%
\]
which implies%
\[
H^{2}\left(
{\displaystyle\bigotimes\limits_{i=1}^{mn}}
Q_{1,i},%
{\displaystyle\bigotimes\limits_{i=1}^{mn}}
Q_{2,i}\right)  \leq Cn\left\Vert \bar{f}_{n}-\bar{f}_{m}\right\Vert _{2}%
^{2}\leq Cn\left\Vert f-\bar{f}_{m}\right\Vert _{2}^{2}+Cn\left\Vert f-\bar
{f}_{n}\right\Vert _{2}^{2}.
\]
Now as in (\ref{rate-in-sobolev}) this upper bound is $o(1)$ uniformly over
$f\in\Sigma$.
\end{proof}

\begin{lemma}
We have the asymptotic equivalence
\[
\mathcal{\mathring{E}}_{m,n}^{\ast}\simeq\mathcal{\mathring{E}}_{m,n}\text{ as
}n\rightarrow\infty\text{.}%
\]

\end{lemma}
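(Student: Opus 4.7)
My approach exploits that $\mathcal{\mathring{E}}_{m,n}^{\ast}$ and $\mathcal{\mathring{E}}_{m,n}$ differ only in the gamma \emph{shape} parameter ($1/2m$ versus $1/2n$), while sharing identical scales $2J_{j,m}(f)$. The key first step is to apply the sufficiency reduction \eqref{gamma-suffic-arg} within each of the $m$ blocks: since $\Gamma^{\otimes n}(a,\cdot)\sim\Gamma(na,\cdot)$ as experiments in the scale parameter, we obtain
\[
\mathcal{\mathring{E}}_{m,n}^{\ast}\sim\bigotimes_{j=1}^{m}\Gamma\bigl(n/(2m),2J_{j,m}(f)\bigr),\qquad\mathcal{\mathring{E}}_{m,n}\sim\bigotimes_{j=1}^{m}\Gamma\bigl(1/2,2J_{j,m}(f)\bigr),
\]
two product laws on the common sample space $(0,\infty)^{m}$, and it now suffices to bound the total variation distance between them uniformly over $f\in\Sigma$.

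To this end I will estimate the Hellinger distance and use $\|P-Q\|_{TV}\leq H(P,Q)$. By the tensorization inequality $H^{2}(\bigotimes P_{j},\bigotimes Q_{j})\leq\sum_{j}H^{2}(P_{j},Q_{j})$ (from $\prod\rho_{j}\geq 1-\sum(1-\rho_{j})$), it is enough to control each factor. Set $a=n/(2m)=\tfrac{1}{2}+r_{n}/(2m)$ and $b=\tfrac{1}{2}$, so $a-b=O(r_{n}/n)$; Lemma~\ref{lem-helldist-gamma-samescale} expresses the per-factor Hellinger squared as $2\bigl(1-\Gamma((a+b)/2)/\sqrt{\Gamma(a)\Gamma(b)}\bigr)$, which crucially is independent of the scale $s_j$. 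Since $\log\Gamma$ is smooth on a neighborhood of $1/2$, a second-order Taylor expansion at $1/2$ gives
\[
\tfrac{1}{2}\log\Gamma(a)+\tfrac{1}{2}\log\Gamma(b)-\log\Gamma\bigl((a+b)/2\bigr)=\tfrac{1}{8}\psi'(1/2)(a-b)^{2}+O\bigl((a-b)^{3}\bigr),
\]
so that $H^{2}(\Gamma(a,s_j),\Gamma(b,s_j))\leq C r_{n}^{2}/n^{2}$ uniformly in $j$ and $f$.

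Summing over the $m\leq n$ blocks yields $\sup_{f\in\Sigma} H^{2}\leq C m r_{n}^{2}/n^{2}=O(r_{n}^{2}/n)\to 0$, since $r_{n}=o(n^{1/2})$. Combining this total-variation convergence at the reduced level with the two sufficiency equivalences delivers the claimed asymptotic equivalence. The main subtlety — and the reason the sufficiency reduction is essential — is that a direct Hellinger computation on the original $mn$-fold products does \emph{not} succeed: expanding $\Gamma(x)\sim x^{-1}$ near $0$ gives $\log\rho(\Gamma(1/2m,s),\Gamma(1/2n,s))\sim -r_{n}^{2}/(8n^{2})$ per factor, and raising to the $mn$-th power produces a Hellinger affinity $\exp(-r_{n}^{2}/8)$ that is bounded away from $1$ once $r_{n}\to\infty$. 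The sufficiency step collapses each block of $n$ replicates into a single gamma sum, thereby reducing the total number of product factors from $mn$ to $m$ and converting the exponent from $O(mn)$ to $O(m)$; this is precisely what turns a blowing-up Hellinger bound into the vanishing rate $O(r_{n}^{2}/n)$.
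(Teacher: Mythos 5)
Your proof is correct and follows essentially the same route as the paper's: collapse each block of $n$ replicates by sufficiency to a single $\Gamma(n/2m,\cdot)$, invoke Lemma~\ref{lem-helldist-gamma-samescale} for the per-factor Hellinger distance, Taylor-expand near shape $1/2$ to get an $O((a-b)^{2})$ bound per factor, and sum over the $m$ blocks to obtain $O(r_{n}^{2}/n)\to 0$. The only cosmetic difference is that you expand $\log\Gamma$ (yielding the explicit $\tfrac{1}{8}\psi'(1/2)$ coefficient) whereas the paper separately expands $\Gamma$ and $\Gamma^{1/2}$; your closing observation on why the reduction from $mn$ to $m$ product factors is indispensable is a helpful remark, but the underlying argument is identical.
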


\begin{proof}
We know (cf. (\ref{triple-equiv}), (\ref{exper-e-dot-one-def})) that
$\mathcal{\mathring{E}}_{m,n}\sim\mathcal{\mathring{E}}_{m,1}$ where
\[
\mathcal{\mathring{E}}_{m,1}=\left(
{\displaystyle\bigotimes\limits_{j=1}^{m}}
\Gamma(1/2,2J_{j,m}(f)),\;f\in\Sigma\right)  .
\]
Analogously, using (\ref{gamma-suffic-arg}) again, we obtain%
\[
\mathcal{\mathring{E}}_{m,n}^{\ast}\sim\overline{\mathcal{E}}_{m,1}^{\ast
}:=\left(
{\displaystyle\bigotimes\limits_{j=1}^{m}}
\Gamma(n/2m,2J_{j,m}(f)),\;f\in\Sigma\right)
\]
For given $f\in\Sigma$, the Hellinger distance between the two respective
product measures is bounded by (using Lemma 2.19 in Strasser (1985) and then
Lemma \ref{lem-helldist-gamma-samescale})%
\[
2\sum_{j=1}^{m}H^{2}\left(  \Gamma(1/2,2J_{j,m}(f)),\Gamma(n/2m,2J_{j,m}%
(f))\right)  =4\sum_{j=1}^{m}\left(  1-\frac{\Gamma(1/4+n/4m)}{\left(
\Gamma(1/2)\Gamma(n/2m)\right)  ^{1/2}}\right)  .
\]
Note that this bound does not depend on $f\in\Sigma$. Write $n/m=1+\delta$
where $\delta=r_{n}/m$; the above is
\begin{equation}
4\sum_{j=1}^{m}\frac{\left(  \Gamma(1/2)\Gamma(1/2+\delta/2)\right)
^{1/2}-\Gamma(1/2+\delta/4)}{\left(  \Gamma(1/2)\Gamma(1/2+\delta/2)\right)
^{1/2}}.\label{gamma-expression-a}%
\end{equation}
The Gamma function is infinitely differentiable on $(0,\infty)$; by a Taylor
expansion we obtain%
\begin{align*}
\Gamma(1/2+\delta/4)  & =\Gamma(1/2)+\Gamma^{\prime}(1/2)\frac{\delta}%
{4}+O(\delta^{2}),\\
\Gamma^{1/2}(1/2+\delta/2)  & =\Gamma^{1/2}(1/2)+\frac{1}{2}\Gamma
^{-1/2}(1/2)\Gamma^{\prime}(1/2)\frac{\delta}{2}+O(\delta^{2}).
\end{align*}
Consequently
\[
\left(  \Gamma(1/2)\Gamma(1/2+\delta/2)\right)  ^{1/2}-\Gamma(1/2+\delta
/4)=O(\delta^{2})
\]
so that (\ref{gamma-expression-a}) becomes
\[
\sum_{j=1}^{m}\frac{O(\delta^{2})}{\Gamma(1/2)(1+o(1))}=m\delta^{2}%
O(1)\leq\frac{r_{n}^{2}}{m}O(1).
\]
The condition $r_{n}=o(n^{1/2})$ now implies that this upper bound is $o(1)$.
We thus established total variation asymptotic equivalence $\mathcal{\mathring
{E}}_{m,1}\simeq\overline{\mathcal{E}}_{m,1}^{\ast}$.
\end{proof}

\section{Appendix: auxiliary statements and analytic facts \label{sec-proofs}}

\subsection{Proof of Lemma \ref{lem-helldist-covmatr}}

Consider the\textbf{\ }spectral decompositions of $A$ and $B$:
\[
A=C_{1}\Lambda_{1}C_{1}^{\prime}\text{, }B=C_{2}\Lambda_{2}C_{2}^{\prime}%
\]
where $\Lambda_{i}$ are $n\times n$ diagonal matrices and $C_{i}$ are
orthogonal matrices. Recall the simultaneous diagonalization of $A$ and $B$:
setting $D=\Lambda_{1}^{-1/2}C_{1}^{\prime}$, we obtain
\[
DAD^{\prime}=I_{n}\text{, }\tilde{B}:=DBD^{\prime}%
\]
and letting $\tilde{B}=\tilde{C}\tilde{\Lambda}\tilde{C}^{\prime}$ be the
spectral decomposition of $\tilde{B}$, we obtain with $\tilde{D}:=\tilde
{C}^{\prime}D$%
\[
\tilde{D}A\tilde{D}^{\prime}=I_{n},\tilde{D}B\tilde{D}^{\prime}=\tilde
{\Lambda}\mathbf{.}%
\]
We now claim that
\begin{equation}
\left\Vert I_{n}-\tilde{\Lambda}\right\Vert ^{2}\leq M^{2}\left\Vert
A-B\right\Vert ^{2}.\label{inequ-squarednorm-a}%
\end{equation}
Indeed we have
\begin{align*}
\left\Vert I_{n}-\tilde{\Lambda}\right\Vert ^{2}  & =\mathrm{tr}\left[
(I_{n}-\tilde{\Lambda})(I_{n}-\tilde{\Lambda}\mathbf{)}\right] \\
& =\mathrm{tr}\left[  \left(  \tilde{D}A\tilde{D}^{\prime}-\tilde{D}B\tilde
{D}^{\prime}\right)  \left(  \tilde{D}A\tilde{D}^{\prime}-\tilde{D}B\tilde
{D}^{\prime}\right)  \right] \\
& =\mathrm{tr}\left[  \tilde{D}^{\prime}\tilde{D}\left(  A-B\right)  \tilde
{D}^{\prime}\tilde{D}\left(  A-B\right)  \right]
\end{align*}
Now for eigenvalues $\lambda_{\max}(\cdot)$ we have
\begin{align}
\lambda_{\max}\left(  \tilde{D}^{\prime}\tilde{D}\right)   & =\lambda_{\max
}\left(  \tilde{D}\tilde{D}^{\prime}\right)  =\lambda_{\max}\left(  \tilde
{C}^{\prime}DD^{\prime}\tilde{C}\right)  =\lambda_{\max}\left(  DD^{\prime
}\right) \nonumber\\
& =\lambda_{\max}\left(  C_{1}\Lambda_{1}^{-1/2}\Lambda_{1}^{-1/2}%
C_{1}^{\prime}\right)  =\lambda_{\min}^{-1}\left(  A\right)  \leq
M,\label{eigenva-estim-1-aa}%
\end{align}
hence
\begin{align*}
\left\Vert I_{n}-\tilde{\Lambda}\right\Vert ^{2}  & \leq M\;\mathrm{tr}\left[
\tilde{D}^{\prime}\tilde{D}\left(  A-B\right)  \left(  A-B\right)  \right] \\
& \leq M^{2}\;\mathrm{tr}\left[  \left(  A-B\right)  \left(  A-B\right)
\right]  =M^{2}\left\Vert A-B\right\Vert ^{2}%
\end{align*}
so that (\ref{inequ-squarednorm-a}) is proved. Similarly to
(\ref{eigenva-estim-1-aa}) we obtain a bound from below
\[
\lambda_{\min}\left(  \tilde{D}^{\prime}\tilde{D}\right)  =\lambda_{\max}%
^{-1}\left(  A\right)  \geq M^{-1}%
\]
which yields analogously to (\ref{inequ-squarednorm-a})
\begin{equation}
\left\Vert I_{n}-\tilde{\Lambda}\right\Vert ^{2}\geq M^{-2}\left\Vert
A-B\right\Vert ^{2}.\label{inequ-squarednorm-2-a}%
\end{equation}
Consider now the Hellinger affinity $A_{H}(\cdot,\cdot)$ between the one
dimensional normals $N(0,1)$ and $N(0,\sigma^{2})$: if $\varphi$ is the
standard normal density then
\begin{align*}
A_{H}(N(0,1),N(0,\sigma^{2}))  & =\sigma^{-1/2}\int\varphi^{1/2}%
(x)\varphi^{1/2}(x\sigma^{-1})dx\\
& =\left(  \frac{2\sigma}{1+\sigma^{2}}\right)  ^{1/2}=\left(  1-\frac
{(1-\sigma)^{2}}{1+\sigma^{2}}\right)  ^{1/2}\\
& =\left(  1-\frac{(1-\sigma^{2})^{2}}{(1+\sigma)^{2}(1+\sigma^{2})}\right)
^{1/2}%
\end{align*}
Let $h=\sigma^{2}-1$; then as $h\rightarrow0$%
\begin{equation}
\log A_{H}(N(0,1),N(0,\sigma^{2}))=-\frac{h^{2}}{16}%
(1+o(1)).\label{log-affi-a}%
\end{equation}
The matrix $\tilde{D}$ is nonsingular, and since the Hellinger distance is
invariant under one-to-one transformations,
\[
H^{2}\left(  N_{n}(0,A),N_{n}(0,B)\right)  =H^{2}\left(  N_{n}(0,I_{n}%
),N_{n}(0,\tilde{\Lambda})\right)  =
\]%
\begin{align}
& =2\left(  1-A_{H}(N_{n}(0,I_{n}),N_{n}(0,\tilde{\Lambda}))\right)  =2\left(
1-\prod_{i=1}^{n}A_{H}(N(0,1),N(0,\tilde{\lambda}_{i}))\right) \nonumber\\
& =2\left(  1-\exp\left(  \sum_{i=1}^{n}\log\left(  A_{H}(N(0,1),N(0,\tilde
{\lambda}_{i}))\right)  \right)  \right) \label{from-consequence}%
\end{align}
where $\tilde{\lambda}_{i}$, $i=1,\ldots,n$ are the diagonal elements of
$\tilde{\Lambda}$. Let us assume that $\left\Vert A-B\right\Vert \leq
\epsilon\rightarrow0$ where the dimension $n$ of $A,B$ may vary arbitrarily.
Since
\[
\sup_{i=1,\ldots,n}\left\vert 1-\tilde{\lambda}_{i}\right\vert ^{2}\leq
\sum_{i=1}^{n}(1-\tilde{\lambda}_{i})^{2}=\left\Vert I_{n}-\tilde{\Lambda
}\right\Vert ^{2}%
\]
we may write, in view of (\ref{inequ-squarednorm-a}) and (\ref{log-affi-a})
\[
\log\left(  A_{H}(N(0,1),N(0,\tilde{\lambda}_{i}))\right)  =-\frac{1}%
{16}\left(  1-\tilde{\lambda}_{i}\right)  ^{2}(1+\rho_{i})
\]
where $\sup_{i=1,\ldots,n}\left\vert \rho_{i}\right\vert \rightarrow0$ as
$\epsilon\rightarrow0$. Since
\[
\left\vert \sum_{i=1}^{n}\left(  1-\tilde{\lambda}_{i}\right)  ^{2}\rho
_{i}\right\vert \leq\left(  \sup_{i=1,\ldots,n}\left\vert \rho_{i}\right\vert
\right)  \sum_{i=1}^{n}\left(  1-\tilde{\lambda}_{i}\right)  ^{2}%
\]
we obtain for $\epsilon\rightarrow0$
\[
-16\sum_{i=1}^{n}\log\left(  A_{H}(N(0,1),N(0,\tilde{\lambda}_{i}))\right)
=\sum_{i=1}^{n}\left(  1-\tilde{\lambda}_{i}\right)  ^{2}(1+\rho_{i})
\]%
\[
=\sum_{i=1}^{n}\left(  1-\tilde{\lambda}_{i}\right)  ^{2}+o(1)\sum_{i=1}%
^{n}\left(  1-\tilde{\lambda}_{i}\right)  ^{2}=\left\Vert I_{n}-\tilde
{\Lambda}\right\Vert ^{2}(1+o(1))
\]
and as a consequence from (\ref{from-consequence})
\begin{align*}
H^{2}\left(  N_{n}(0,A),N_{n}(0,B)\right)   & =2\left(  1-\exp\left(
-\frac{1}{16}\left\Vert I_{n}-\tilde{\Lambda}\right\Vert ^{2}(1+o(1))\right)
\right) \\
& =\frac{1}{8}\left\Vert I_{n}-\tilde{\Lambda}\right\Vert ^{2}(1+o(1)).
\end{align*}
In conjunction with (\ref{inequ-squarednorm-a}) and
(\ref{inequ-squarednorm-2-a}), the last relation proves the lemma.

\subsection{An auxiliary result for the proof of Lemma \ref{lem-local-likproc}%
}

Let $A,B$ be two $n\times n$ covariance matrices. Recall that for every
covariance matrix $A$ there is a uniquely defined symmetric square root matrix
$A^{1/2}$: if $A=D\Lambda D^{\top}$ is a spectral decomposition ($D$
orthogonal, $\Lambda$ diagonal) of $A$ then $A^{1/2}=D\Lambda^{1/2}D^{\top}$.

\begin{lemma}
\label{lem-sqrt-matr}Let $A\mathbf{,}B$ be two $n\times n$ covariance
matrices. Then
\[
\left\Vert {\normalsize A}^{1/2}-{\normalsize B}^{1/2}\right\Vert
\lambda_{\min}({\normalsize A}^{1/2}+{\normalsize B}^{1/2})\leq\left\Vert
{\normalsize A}-{\normalsize B}\right\Vert .
\]

\end{lemma}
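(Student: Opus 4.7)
The plan is to reduce the inequality to a straightforward eigenbasis computation, using the commutator-like identity
\[
(A^{1/2}+B^{1/2})(A^{1/2}-B^{1/2}) + (A^{1/2}-B^{1/2})(A^{1/2}+B^{1/2}) = 2(A-B),
\]
which follows by expanding both products and noting the cross terms $\pm A^{1/2}B^{1/2}$ and $\pm B^{1/2}A^{1/2}$ cancel in the sum. Writing $T=A^{1/2}+B^{1/2}$ and $S=A^{1/2}-B^{1/2}$, this reads $TS+ST=2(A-B)$, so taking Frobenius norm gives $\|TS+ST\|=2\|A-B\|$. The goal then reduces to proving the lower bound $\|TS+ST\|\ge 2\lambda_{\min}(T)\,\|S\|$.

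Next I would diagonalize $T$. Since $A^{1/2}$ and $B^{1/2}$ are symmetric positive semidefinite, so is $T$; write $T=VDV^{\top}$ with $V$ orthogonal and $D=\operatorname{diag}(\mu_1,\ldots,\mu_n)$, $\mu_i\ge\lambda_{\min}(T)=:\lambda$. Let $\tilde S = V^{\top}SV$. Because the Frobenius norm is invariant under orthogonal conjugation, $\|\tilde S\|=\|S\|$ and $\|TS+ST\|=\|D\tilde S+\tilde S D\|$. The entrywise formula $(D\tilde S+\tilde S D)_{ij}=(\mu_i+\mu_j)\tilde S_{ij}$ then yields
\[
\|TS+ST\|^{2}=\sum_{i,j}(\mu_i+\mu_j)^{2}|\tilde S_{ij}|^{2}\ge (2\lambda)^{2}\sum_{i,j}|\tilde S_{ij}|^{2}=4\lambda^{2}\|S\|^{2},
\]
so $\|TS+ST\|\ge 2\lambda\|S\|$, which combined with the identity gives exactly the claimed inequality.

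There is essentially no hard step here; the only thing to be careful about is the algebraic identity at the start (it is the symmetric square root, not an arbitrary one, that makes the cancellation work, but this is automatic since both $A^{1/2}$ and $B^{1/2}$ are symmetric) and the verification that $T$ is symmetric positive semidefinite so that $\lambda_{\min}(T)\ge 0$ is well defined and the eigenvalue bound $\mu_i+\mu_j\ge 2\lambda_{\min}(T)$ is valid. No assumption of positive definiteness of $T$ is needed; if $\lambda_{\min}(T)=0$ the inequality is trivial.
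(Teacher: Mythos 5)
Your proof is correct, and it rests on the same key algebraic identity as the paper's: writing $T=A^{1/2}+B^{1/2}$, $S=A^{1/2}-B^{1/2}$ and observing that $TS+ST=2(A-B)$. Where you diverge is in how the lower bound $\Vert TS+ST\Vert\ge 2\lambda_{\min}(T)\Vert S\Vert$ is extracted. The paper expands $\Vert TS+ST\Vert^{2}$ into the two trace terms $2\,\mathrm{tr}[STTS]+2\,\mathrm{tr}[STST]$ and bounds each one separately using Loewner-order inequalities (including the slightly delicate step $\mathrm{tr}[STST]=\mathrm{tr}[T^{1/2}STST^{1/2}]\ge\lambda_{\min}(T)\,\mathrm{tr}[T^{1/2}SST^{1/2}]$). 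You instead diagonalize $T=VDV^{\top}$, use orthogonal invariance of the Frobenius norm, and apply the entrywise formula $(D\tilde S+\tilde S D)_{ij}=(\mu_i+\mu_j)\tilde S_{ij}$, so the whole bound falls out of $\mu_i+\mu_j\ge 2\lambda_{\min}(T)$ in one line. Your route is cleaner and more transparent: it avoids the operator-inequality bookkeeping, it makes the constant $2\lambda_{\min}(T)$ visibly sharp (equality when $\tilde S$ is supported on the minimal eigendirection of $T$), and it requires nothing beyond orthogonal diagonalization of one symmetric matrix. Both arguments are elementary, but yours would be the shorter one to typeset.
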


\begin{proof}
Observe that
\begin{align*}
\left(  {\normalsize A}^{1/2}-{\normalsize B}^{1/2}\right)  {\normalsize B}%
^{1/2}+{\normalsize A}^{1/2}\left(  {\normalsize A}^{1/2}-{\normalsize B}%
^{1/2}\right)   & ={\normalsize A}-{\normalsize B}\\
\left(  {\normalsize A}^{1/2}-{\normalsize B}^{1/2}\right)  {\normalsize A}%
^{1/2}+{\normalsize B}^{1/2}\left(  {\normalsize A}^{1/2}-{\normalsize B}%
^{1/2}\right)   & ={\normalsize A}-{\normalsize B}%
\end{align*}
Add up the two equations and set $S\mathbf{=}\left(  A^{1/2}+B^{1/2}\right)
$; $D=\left(  A^{1/2}-B^{1/2}\right)  $; then
\begin{equation}
{\normalsize DS}+{\normalsize SD}=2\left(  {\normalsize A}-{\normalsize B}%
\right)  \mathbf{.}\label{key-matrx-id}%
\end{equation}
Take the squared norm $\left\Vert \cdot\right\Vert ^{2}\mathbf{\ }$on both
sides and observe
\begin{align*}
\left\Vert {\normalsize DS}+{\normalsize SD}\right\Vert ^{2}  & =\mathrm{tr}%
\left[  {\normalsize (DS+SD)(DS+SD)}\right] \\
& =2\mathrm{tr}\left[  {\normalsize DSSD}\right]  +2\mathrm{tr}\left[
{\normalsize DSDS}\right]  .
\end{align*}
Clearly we have
\begin{align*}
\mathrm{tr}\left[  {\normalsize DSSD}\right]   & \geq\left(  \lambda_{\min
}({\normalsize S})\right)  ^{2}\mathrm{tr}\left[  {\normalsize DD}\right] \\
\mathrm{tr}\left[  {\normalsize DSDS}\right]   & =\mathrm{tr}\left[
{\normalsize S}^{1/2}{\normalsize DSDS}^{1/2}\right]  \geq\lambda_{\min
}({\normalsize S})\mathrm{tr}\left[  {\normalsize S}^{1/2}{\normalsize DDS}%
^{1/2}\right] \\
& \geq\left(  \lambda_{\min}({\normalsize S})\right)  ^{2}\mathrm{tr}\left[
{\normalsize DD}\right]  .
\end{align*}
The last two displays imply
\[
\left\Vert {\normalsize DS+SD}\right\Vert ^{2}\geq4\left(  \lambda_{\min
}({\normalsize S})\right)  ^{2}\left\Vert D\right\Vert ^{2}%
\]
which in conjunction with (\ref{key-matrx-id}) yields
\[
\left\Vert {\normalsize D}\right\Vert \lambda_{\min}({\normalsize S}%
)\leq\left\Vert {\normalsize A-B}\right\Vert .
\]

\end{proof}

\subsection{Besov spaces on an interval\label{subsec-besov}}

Let $f$ be a function defined on $I=[0,1]$ \ and for $0<h<1$ define
\begin{align*}
\left\Vert \Delta_{h}f\right\Vert _{p}^{p}  & :=\int_{0}^{1-h}\left\vert
f(x)-f(x+h)\right\vert ^{p}dx\text{ for }1\leq p<\infty\text{,}\\
\left\Vert \Delta_{h}f\right\Vert _{\infty}  & =\sup_{0\leq x\leq
1-h}\left\vert f(x)-f(x+h)\right\vert .
\end{align*}
For $1\leq p\leq\infty$, the modulus of smoothness is defined as
\[
\omega(f,t)_{p}:=\sup_{0<h\leq t}\left\Vert \Delta_{h}f\right\Vert _{p}.
\]
For $0<\alpha<1$ and $1\leq q\leq\infty$ define a Besov type seminorm
$\left\vert f\right\vert _{B_{p,q}^{\alpha}}$ by
\begin{align*}
\left\vert f\right\vert _{B_{p,q}^{\alpha}}  & :=\left(  \int_{0}^{\infty
}\left(  \frac{\omega(f,t)_{p}}{t^{\alpha}}\right)  ^{q}\frac{dt}{t}\right)
^{1/q}\text{ for }1\leq q<\infty,\\
\left\vert f\right\vert _{B_{p,q}^{\alpha}}  & =\sup_{t>1}\omega
(f,t)_{p}\text{ for }q=\infty
\end{align*}
and a norm $\left\Vert f\right\Vert _{B_{p,q}^{\alpha}}$ by
\[
\left\Vert f\right\Vert _{B_{p,q}^{\alpha}}:=\left\Vert f\right\Vert
_{p}+\left\vert f\right\vert _{B_{p,q}^{\alpha}}.
\]
The Besov space $B_{p,q}^{\alpha}$ (for $1\leq p<\infty$, $0<\alpha<1$) is the
set of $f$ where $\left\Vert f\right\Vert _{B_{p,q}^{\alpha}}<\infty$,
equipped with the norm $\left\Vert \cdot\right\Vert _{B_{p,q}^{\alpha}}$.
Define also the H\"{o}lder norm
\begin{equation}
\left\Vert f\right\Vert _{C^{\alpha}}:=\left\Vert f\right\Vert _{\infty}%
+\sup_{x\neq y}\frac{\left\vert f(x)-f(y)\right\vert }{\left\vert
x-y\right\vert }\label{holder-norm}%
\end{equation}
and the corresponding H\"{o}lder space $C^{\alpha}$. For two different spaces,
$B$ and $B^{\prime}$ say, an \textit{embedding theorem }(written
$B\hookrightarrow B^{\prime}$) is a norm inequality
\[
\left\Vert f\right\Vert _{B^{\prime}}\leq C\left\Vert f\right\Vert _{B}%
\]
where $C$ depends on $B^{\prime}$, $B$. Thus the embedding implies the set
inclusion $B\subset B^{\prime}$. We cite the basic embedding theorem for our
case, which is obtained by combining Theorems 18.4 , 18.5 18.8 in Besov, Il'in
and Nikol'skii (1979) with Theorems 3.3.1 and 2.5.7. in Triebel (1983), for
the special case of a domain $[0,1]$.

\begin{proposition}
\label{prop-embed-theor}Let $0<\alpha^{\prime}<\alpha<1$ and $1\leq
p,q,p^{\prime},q^{\prime}\leq\infty.$ Then \newline i) if $q<q^{\prime}$ then
\[
B_{p,q}^{\alpha}\hookrightarrow B_{p,q^{\prime}}^{\alpha}%
\]
ii) if $p<p^{\prime}$ and $\alpha-\left(  1/p-1/p^{\prime}\right)  >0$ then
for $\alpha^{\prime}=\alpha-\left(  1/p-1/p^{\prime}\right)  $
\[
B_{p,q}^{\alpha}\hookrightarrow B_{p^{\prime},q}^{\alpha^{\prime}}%
\]
iii) if $p\geq p^{\prime}$ then
\[
B_{p,q}^{\alpha}\hookrightarrow B_{p^{\prime},q^{\prime}}^{\alpha^{\prime}}.
\]
iv) we have $B_{\infty,\infty}^{\alpha}=C^{\alpha}$ in the sense of
equivalence of norms:%
\[
B_{\infty,\infty}^{\alpha}\hookrightarrow C^{\alpha}\text{ and }C^{\alpha
}\hookrightarrow B_{\infty,\infty}^{\alpha}.
\]

\end{proposition}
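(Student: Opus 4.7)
The plan is to derive the four embeddings directly from the definition of the Besov norm via the modulus of smoothness $\omega(f,t)_p$ on the bounded interval $[0,1]$, treating each part separately and deferring the deepest one (part (ii)) to classical references.

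For part (iv), the equivalence $B_{\infty,\infty}^{\alpha}=C^{\alpha}$ (in the sense of equivalent norms) is almost immediate from the definitions: one has $\omega(f,t)_{\infty}=\sup_{|x-y|\leq t}|f(x)-f(y)|$ taken over the appropriate restricted domain, so $\sup_{t>0}\omega(f,t)_{\infty}/t^{\alpha}$ agrees with the H\"{o}lder seminorm up to a multiplicative constant, with the behavior for $t\geq 1$ absorbed into $\|f\|_{\infty}$. Incidentally, the display of $\|f\|_{C^{\alpha}}$ in (\ref{holder-norm}) appears to omit the exponent $\alpha$ on $|x-y|$, which I would correct before matching the two norms.

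For parts (i) and (iii), I would exploit elementary properties of $\omega(f,t)_p$: it is nondecreasing in $t$ and bounded by $2\|f\|_p$. In (i), the integral defining $|f|_{B_{p,q}^{\alpha}}$ on $(1,\infty)$ is controlled by $\|f\|_p$ via $\int_1^\infty t^{-\alpha q}\,dt/t<\infty$, while on $(0,1]$ a standard Dini-type argument shows that finiteness of the $q$-integral forces $\omega(f,t)_p\leq C t^{\alpha}$ uniformly, so passing from $q$ to $q'>q$ amounts to raising a uniformly bounded quantity to a larger power and re-integrating. For (iii), since $[0,1]$ has finite Lebesgue measure, H\"{o}lder's inequality gives $\|g\|_{p'}\leq \|g\|_p$ for $p\geq p'$; applying this to $f$ and to $\Delta_h f$ yields $\omega(f,t)_{p'}\leq \omega(f,t)_p$ pointwise in $t$, and combining with (i) handles the change from $q$ to $q'$. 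The residual loss from $\alpha$ down to $\alpha'<\alpha$ is absorbed using $t^{\alpha}\leq t^{\alpha'}$ on $(0,1]$ together with the uniform bound $\omega(f,t)_p\leq 2\|f\|_p$ on $(1,\infty)$.

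The main obstacle is part (ii), the genuinely non-trivial Sobolev-type exchange of smoothness for integrability at the critical scaling $\alpha-\alpha'=1/p-1/p'$. The standard proofs proceed either via an atomic or Littlewood-Paley decomposition of Besov spaces, or by combining a Marchaud-type inequality for the modulus of smoothness with a Hardy inequality in the variable $t$ with weight $dt/t$. Rather than reproduce this classical analysis, which even on $[0,1]$ requires several pages of Fourier-analytic or approximation-theoretic machinery, my plan is to invoke Theorem 18.5 of Besov, Il'in and Nikol'skii (1979) together with the characterization in Triebel (1983), exactly as the authors do. Since part (ii) is tangential to the spectral-density focus of the paper, a citation is the most economical and honest route, and the self-contained arguments for (i), (iii) and (iv) sketched above suffice to keep the Appendix essentially self-contained in the regimes actually needed in the main text.
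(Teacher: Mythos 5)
The paper offers no proof at all for this proposition: it is stated as a citation, obtained by \textquotedblleft combining Theorems 18.4, 18.5, 18.8 in Besov, Il'in and Nikol'skii (1979) with Theorems 3.3.1 and 2.5.7 in Triebel (1983), for the special case of a domain $[0,1]$.\textquotedblright\ Your proposal is therefore a genuinely different route: you supply self-contained elementary arguments for (i), (iii), (iv), citing only for the Sobolev-type part (ii). That is a reasonable trade-off --- it buys transparency in the easy parts at the cost of some length, while part (ii) really does require Marchaud/Hardy or Littlewood--Paley machinery and is sensibly delegated.

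Your arguments are essentially sound, and the key ingredient you identify is correct: since $t\mapsto\omega(f,t)_p$ is nondecreasing, $\int_{t}^{2t}\left(\omega(f,s)_p/s^{\alpha}\right)^{q}\,ds/s\geq C\left(\omega(f,t)_p/t^{\alpha}\right)^{q}$ forces the uniform bound $\omega(f,t)_p\leq C\,t^{\alpha}\left\vert f\right\vert_{B^{\alpha}_{p,q}}$ on $(0,1]$, which then gives (i) by raising a bounded quantity to the power $q'-q$. One imprecision in (iii): you say \textquotedblleft combining with (i) handles the change from $q$ to $q'$,\textquotedblright\ but (i) only runs in the direction $q<q'$. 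For arbitrary $q'$ (including $q'<q$) the correct observation is that the strict drop $\alpha'<\alpha$ does all the work: the same uniform bound $\omega(f,t)_p\leq C\,t^{\alpha}$ plugged into $\int_0^1\left(\omega(f,t)_p/t^{\alpha'}\right)^{q'}\,dt/t\leq C^{q'}\int_0^1 t^{(\alpha-\alpha')q'}\,dt/t<\infty$ absorbs both the $q$-change and the $\alpha$-change simultaneously, with the tail $t>1$ controlled by $\omega(f,t)_p\leq 2\|f\|_p$. You should not appeal to (i) here at all. Finally, you correctly flag the missing exponent $\alpha$ in the paper's H\"{o}lder seminorm display; note that the paper's $q=\infty$ Besov seminorm, written as $\sup_{t>1}\omega(f,t)_p$, also has a typo (it should be $\sup_{t>0}\omega(f,t)_p\,t^{-\alpha}$), and your argument in (iv) implicitly uses the corrected version.
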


\textbf{Approximation by step functions. }Consider a partition of $[0,1]$ into
$n$ intervals $W_{j,n}$ $j=1,\ldots,n$ of equal length and for any $f\in
L_{2}(0,1),$ let $\bar{f}_{n}$ be the $L_{2}$-projection onto the piecewise
constant functions, i.e.
\begin{equation}
\bar{f}_{n}=\sum_{j=1}^{n}J_{j,n}(f)\mathbf{1}_{W_{j,n}}\text{, where }%
J_{j,n}(f)=n\int_{W_{jn,}}f(x)dx.\label{confer-intervals}%
\end{equation}

\begin{lemma}
\label{lem-approx-piececonst}For $0<\alpha<1$ and $f\in B_{2,2}^{\alpha}$ we
have
\[
\left\Vert f-\bar{f}_{n}\right\Vert _{2}^{2}\leq4\;n^{-2\alpha}\;\left\vert
f\right\vert _{B_{2,2}^{\alpha}}^{2}.
\]

\end{lemma}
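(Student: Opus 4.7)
The plan is to exploit the variance representation of the $L_2$-projection onto piecewise constants and then convert the resulting double integral into an integral of the modulus of smoothness $\omega(f,\cdot)_2$, which is exactly what the Besov seminorm controls.

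First, on each subinterval $W_{j,n}$ of length $1/n$, the constant $J_{j,n}(f)$ is the $L_2$-optimal approximation, so
\[
\int_{W_{j,n}} (f(x)-J_{j,n}(f))^2\,dx = \frac{n}{2}\int_{W_{j,n}}\int_{W_{j,n}} (f(x)-f(y))^2\,dx\,dy,
\]
the elementary identity that mean-squared deviation from the average equals half the mean squared pairwise difference (with the factor $1/|W_{j,n}| = n$ out front). Summing over $j$ gives
\[
\|f-\bar f_n\|_2^2 = \frac{n}{2}\sum_{j=1}^{n}\int_{W_{j,n}}\int_{W_{j,n}}(f(x)-f(y))^2\,dx\,dy.
\]

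Next, I would change variables via $h=y-x$. For $h$ fixed with $|h|\leq 1/n$, the set of $x$ with both $x$ and $x+h$ in the same $W_{j,n}$ (summed over $j$) lies in $[0,1-|h|]$, so
\[
\sum_{j=1}^{n}\int_{W_{j,n}}\int_{W_{j,n}}(f(x)-f(y))^2\,dx\,dy \;\leq\; \int_{-1/n}^{1/n}\|\Delta_{|h|}f\|_2^{\,2}\,dh \;=\; 2\int_{0}^{1/n}\|\Delta_h f\|_2^{\,2}\,dh.
\]
Since $\|\Delta_h f\|_2 \leq \omega(f,h)_2$, this yields
\[
\|f-\bar f_n\|_2^2 \;\leq\; n\int_0^{1/n}\omega(f,h)_2^{\,2}\,dh.
\]

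Finally, I would recover the Besov seminorm by inserting the correct weight: on $[0,1/n]$ we have $h^{2\alpha+1}\leq n^{-(2\alpha+1)}$, so
\[
n\int_0^{1/n}\omega(f,h)_2^{\,2}\,dh \;\leq\; n\cdot n^{-(2\alpha+1)}\int_0^{1/n}\frac{\omega(f,h)_2^{\,2}}{h^{2\alpha+1}}\,dh \;\leq\; n^{-2\alpha}\,|f|_{B_{2,2}^\alpha}^{\,2},
\]
which is even stronger than the stated bound with constant $4$. There is no real obstacle; the only point requiring some care is the change of variables in step two, where one has to verify that the $x$-section at fixed $h$ stays within $[0,1-|h|]$ so that $\|\Delta_h f\|_2$ is the right majorant.
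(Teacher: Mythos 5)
Your proof is correct, and it is essentially the same approach as the paper's: both pass from the mean--square error of the piecewise-constant projection to a double integral of $(f(x)-f(y))^2$ on each subinterval, then change variables to $h$ and bound by $n\int_0^{1/n}\omega(f,h)_2^2\,dh$, and finally insert $h^{2\alpha+1}\le n^{-(2\alpha+1)}$ to recover the Besov seminorm. The one genuine improvement is your use of the symmetric variance identity
\[
\int_{W_{j,n}}(f-J_{j,n}(f))^2 \;=\; \frac{n}{2}\int_{W_{j,n}}\int_{W_{j,n}}(f(x)-f(y))^2\,dx\,dy,
\]
which is inherently symmetric in $(x,y)$ and therefore, after summing over $j$ and Fubini, never produces a pair $(x,x+h)$ with $x+h\notin[0,1]$. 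The paper instead applies Jensen, restricts to the upper triangle, and then has to treat the last interval $W_{n,n}$ separately (leading to the auxiliary sets $A$ and $A^\ast$ and the factor $4$). Your route sidesteps that bookkeeping entirely and yields the sharper constant $1$ in place of $4$. One small wording issue: for $h<0$ the set of admissible $x$ is $[|h|,1]$, not $[0,1-|h|]$, but after the shift $x\mapsto x-|h|$ this is exactly the domain defining $\|\Delta_{|h|}f\|_2$, so the displayed inequality is correct as stated.
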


\begin{proof}
Note first
\begin{equation}
\left\Vert f-\bar{f}_{n}\right\Vert _{2}^{2}=\sum_{j=1}^{n}\int_{W_{j,n}%
}\left(  f(x)-J_{j,n}(f)\right)  ^{2}dx.\label{addit-1}%
\end{equation}
For any interval $(a,b)$ and $\varepsilon=b-a$ we have
\begin{align*}
\int_{a}^{b}\left(  f(x)-\varepsilon^{-1}\int_{a}^{b}f(u)du\right)  ^{2}dx  &
=\int_{a}^{b}\left(  \varepsilon^{-1}\int_{a}^{b}\left(  f(x)-f(u)\right)
du\right)  ^{2}dx\\
\text{(by Jensen's inequality)}  & \leq\int_{a}^{b}\varepsilon^{-1}\int
_{a}^{b}\left(  f(x)-f(u)\right)  ^{2}dudx\\
& =2\varepsilon^{-1}\int_{a}^{b}\int_{x}^{b}\left(  f(x)-f(u)\right)
^{2}dudx.
\end{align*}
With a change of variable $h=u-x$ the above equals%
\begin{align*}
& 2\varepsilon^{-1}\int_{a}^{b}\int_{0}^{b-x}\left(  f(x)-f(x+h)\right)
^{2}dhdx\\
& \leq2\varepsilon^{-1}\int_{a}^{b}\int_{0}^{b-x}(b-x)^{2\alpha+1}%
\frac{\left(  f(x)-f(x+h)\right)  ^{2}}{h^{2\alpha+1}}dhdx\\
& \leq2\varepsilon^{2\alpha}\int_{a}^{b}\int_{0}^{b-x}\frac{\left(
f(x)-f(x+h)\right)  ^{2}}{h^{2\alpha+1}}dhdx.
\end{align*}
Setting now $(a,b)=W_{j,n}$, $b=j/n$ and $\varepsilon=n^{-1}$ we obtain for
$j=1,\ldots,n-1$
\begin{align}
\int_{W_{j,n}}\left(  f(x)-J_{j,n}(f)\right)  ^{2}dx  & \leq2n^{-2\alpha
}\;\int_{W_{j,n}}\int_{0}^{j/n-x}\frac{\left(  f(x)-f(x+h)\right)  ^{2}%
}{h^{2\alpha+1}}dhdx\label{onlybound}\\
& \leq2n^{-2\alpha}\;\int_{W_{j,n}}\int_{0}^{1/n}\frac{\left(
f(x)-f(x+h)\right)  ^{2}}{h^{2\alpha+1}}dhdx\nonumber
\end{align}
whereas for $j=n$ we have only the bound (\ref{onlybound}), i.e
\[
\int_{W_{n,n}}\left(  f(x)-J_{n,n}(f)\right)  ^{2}dx\leq2n^{-2\alpha}%
\;\int_{W_{n,n}}\int_{0}^{1-x}\frac{\left(  f(x)-f(x+h)\right)  ^{2}%
}{h^{2\alpha+1}}dhdx.
\]
Hence from (\ref{addit-1}) by adding the upper bounds%
\[
\left\Vert f-\bar{f}_{n}\right\Vert _{2}^{2}=\sum_{j=1}^{n-1}\int_{W_{j,n}%
}\left(  f(x)-J_{j,n}(f)\right)  ^{2}dx+\int_{W_{n,n}}\left(  f(x)-J_{n,n}%
(f)\right)  ^{2}dx,
\]%
\begin{align*}
\sum_{j=1}^{n-1}\int_{W_{j,n}}\left(  f(x)-J_{j,n}(f)\right)  ^{2}dx  &
\leq2n^{-2\alpha}\int_{0}^{1/n}\int_{0}^{1-1/n}\frac{\left(
f(x)-f(x+h)\right)  ^{2}}{h^{2\alpha+1}}dxdh\\
& \leq2n^{-2\alpha}\;\int_{0}^{1/n}\left\Vert \Delta_{h}f\right\Vert _{2}%
^{2}h^{-(2\alpha+1)}dh
\end{align*}
and
\begin{equation}
\int_{W_{n,n}}\left(  f(x)-J_{n,n}(f)\right)  ^{2}dx\leq2n^{-2\alpha}%
\int_{1-1/n}^{1}\int_{0}^{1-x}\frac{\left(  f(x)-f(x+h)\right)  ^{2}%
}{h^{2\alpha+1}}dhdx.\label{secnd-integral-a}%
\end{equation}
Now set
\begin{align*}
g(x,h)  & =\left(  f(x)-f(x+h)\right)  ^{2}h^{-(2\alpha+1)},\\
A  & =\left\{  (x,h):0\leq h\leq1/n,0\leq x\leq1-h\right\}  .
\end{align*}
Then
\[
\int_{0}^{1/n}\left\Vert \Delta_{h}f\right\Vert _{2}^{2}h^{-(2\alpha
+1)}dh=\int_{A}g(x,h)d(x,h)
\]
and the second integral in (\ref{secnd-integral-a}) can be written in the same
way but over a domain
\[
A^{\ast}=\left\{  (x,h):0\leq h\leq1-x,1-1/n\leq x\leq1\right\}  .
\]
Since $A^{\ast}\subset A$ and $g(x,h)\geq0$, we obtain
\begin{align*}
\left\Vert f-\bar{f}_{n}\right\Vert _{2}^{2}  & \leq2n^{-2\alpha}\int
_{A}g(x,h)d(x,h)+2n^{-2\alpha}\int_{A^{\ast}}g(x,h)d(x,h)\\
& \leq4n^{-2\alpha}\;\int_{0}^{1/n}\left\Vert \Delta_{h}f\right\Vert _{2}%
^{2}h^{-(2\alpha+1)}dh\leq4n^{-2\alpha}\;\left\vert f\right\vert
_{B_{2,2}^{\alpha}}^{2}.
\end{align*}

\end{proof}

\begin{lemma}
\label{lem-unif-approx-piececonst}For $1/2<\alpha<1$ and $f\in B_{2,2}%
^{\alpha}$ we have
\[
\left\Vert f-\bar{f}_{n}\right\Vert _{\infty}\leq C_{\alpha}\;n^{1/2-\alpha
}\;\left\Vert f\right\Vert _{B_{2,2}^{\alpha}}.
\]

\end{lemma}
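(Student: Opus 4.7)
The plan is to pass from the Besov smoothness $B_{2,2}^{\alpha}$ to a genuine pointwise Hölder estimate with exponent $\alpha-1/2$ using the embedding theorem already recorded in Proposition~\ref{prop-embed-theor}, and then estimate $f-\bar{f}_n$ interval by interval using that Hölder continuity. The condition $\alpha>1/2$ is exactly what makes this plan viable, since it is precisely the amount of smoothness needed to push $B_{2,2}^\alpha$ into a space of continuous functions.

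First I would build the embedding chain
\[
B_{2,2}^{\alpha} \;\hookrightarrow\; B_{\infty,2}^{\alpha-1/2} \;\hookrightarrow\; B_{\infty,\infty}^{\alpha-1/2} \;=\; C^{\alpha-1/2}.
\]
The first step uses part~(ii) of Proposition~\ref{prop-embed-theor} with $p=2$, $p'=\infty$, $q=2$ (permissible because $\alpha-1/2>0$), giving shifted smoothness index $\alpha-1/2$; the second uses part~(i) with $q=2<q'=\infty$; and the identification with $C^{\alpha-1/2}$ is part~(iv). Concatenating the three norm inequalities produces a constant $C_\alpha$ (depending only on $\alpha$) with
\[
|f(x)-f(y)| \;\leq\; C_\alpha\,\|f\|_{B_{2,2}^{\alpha}}\,|x-y|^{\alpha-1/2}\qquad\text{for all }x,y\in[0,1].
\]

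Next I would estimate $f-\bar{f}_n$ pointwise. For $x\in W_{j,n}$, since $J_{j,n}(f)=n\int_{W_{j,n}}f(u)\,du$ and $|W_{j,n}|=1/n$,
\[
|f(x)-\bar{f}_n(x)| \;=\; \left|n\int_{W_{j,n}}\bigl(f(x)-f(u)\bigr)du\right| \;\leq\; n\int_{W_{j,n}}|f(x)-f(u)|\,du.
\]
Since $|x-u|\leq 1/n$ throughout $W_{j,n}$, the Hölder bound of exponent $\alpha-1/2$ gives $|f(x)-f(u)|\leq C_\alpha\,\|f\|_{B_{2,2}^{\alpha}}\,n^{-(\alpha-1/2)}$, hence
\[
|f(x)-\bar{f}_n(x)| \;\leq\; n\cdot n^{-1}\cdot C_\alpha\|f\|_{B_{2,2}^{\alpha}}\,n^{1/2-\alpha} \;=\; C_\alpha\,\|f\|_{B_{2,2}^{\alpha}}\,n^{1/2-\alpha}.
\]
Since the bound is independent of $j$, taking the supremum over $x\in[0,1]$ yields the claim.

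No step of the argument should present a real obstacle, but the place that requires the most care is the first one: one must correctly track the smoothness index $\alpha-(1/p-1/p')$ in the Sobolev-type embedding of part~(ii) and verify that it remains strictly positive. This is exactly the assumption $\alpha>1/2$ and is the only place where that hypothesis is used; once Hölder continuity of order $\alpha-1/2$ is secured, the remaining estimate is the standard elementary bound for the approximation of a Hölder function by its local averages on a uniform partition.
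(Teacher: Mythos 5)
Your proof is correct and follows essentially the same route as the paper's: both use the embedding chain $B_{2,2}^{\alpha}\hookrightarrow B_{\infty,2}^{\alpha-1/2}\hookrightarrow B_{\infty,\infty}^{\alpha-1/2}\hookrightarrow C^{\alpha-1/2}$ (via Proposition~\ref{prop-embed-theor} (ii), (i), (iv)) to secure H\"older continuity of order $\alpha-1/2$, and then apply the elementary bound $\|f-\bar f_n\|_\infty\le n^{-\beta}\|f\|_{C^\beta}$ for $\beta=\alpha-1/2$. The only difference is that you spell out the (indeed immediate) local-average estimate that the paper states without proof.
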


\begin{proof}
For $0<\beta<1$, consider the H\"{o}lder space $C^{\beta}$ with norm
$\left\Vert f\right\Vert _{C^{\beta}}$ (cf. (\ref{holder-norm})). For $f\in
C^{\beta}$, the result
\[
\left\Vert f-\bar{f}_{n}\right\Vert _{\infty}\leq n^{-\beta}\;\left\Vert
f\right\Vert _{C^{\beta}}%
\]
is immediate. By Proposition \ref{prop-embed-theor} (ii),(i) and (iv), we have
the embeddings
\begin{equation}
B_{2,2}^{\alpha}\hookrightarrow B_{\infty,2}^{\alpha-1/2}\hookrightarrow
B_{\infty,\infty}^{\alpha-1/2}\hookrightarrow C^{a-1/2}.\label{embed-sequence}%
\end{equation}
Setting $\beta=a-1/2$, we obtain the result.
\end{proof}

\paragraph{\textbf{Periodic spaces. }}

For any $f\in L_{2}(0,1)$ and $0<\alpha<1$, let $\left\Vert f\right\Vert
_{2\,,\alpha}$ be the norm defined in terms of Fourier coefficients analogous
to (\ref{normdef}), i.e.
\[
\left\Vert f\right\Vert _{2,\alpha}^{2}:=\gamma_{f}^{2}(0)+\sum_{j=-\infty
}^{\infty}\left\vert j\right\vert ^{2\alpha}\gamma_{f}^{2}(j),\text{ where
}\gamma_{f}(j)=\int_{0}^{1}\exp(2\pi ijx)f(x)dx.
\]
Let $W^{\alpha}$ be the set of $f$ where $\left\Vert f\right\Vert _{2,\alpha
}<\infty$ equipped with this norm. This is the periodic version of the
Besov-Sobolev space $B_{2,2}^{\alpha}$ (thus a standard notation for
$W^{\alpha}$ would be $\tilde{B}_{2,2}^{\alpha}$); we will prove one part of
this claim via the embedding below. For a more comprehensive treatment cf.
Triebel (1983), Theorem 9.2.1.

\begin{lemma}
\label{lem-period-imbed}For $0<\alpha<1$ we have
\[
W^{\alpha}\hookrightarrow B_{2,2}^{\alpha}.
\]

\end{lemma}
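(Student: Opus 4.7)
The plan is to prove the embedding by expanding $f$ in its Fourier series, bounding the $L^2$-modulus of continuity via Parseval, and then integrating in $t$.

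First, I would handle the $L^2$-norm trivially: Parseval gives $\|f\|_2^2 = \sum_j |\gamma_f(j)|^2$, and since $|j|^{2\alpha} \geq 1$ for $|j| \geq 1$, we immediately get $\|f\|_2^2 \leq \gamma_f^2(0) + \sum_{j \neq 0} |j|^{2\alpha} |\gamma_f(j)|^2 = \|f\|_{2,\alpha}^2$. So the content of the lemma is the bound on the Besov seminorm $|f|_{B_{2,2}^\alpha}$.

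Next, I would compute the periodic difference. With Fourier expansion $f(x) = \sum_j \gamma_f(j) e^{-2\pi i j x}$, for the periodic extension $f_p$ of $f$, Parseval yields
\[
\int_0^1 |f_p(x+h) - f(x)|^2 dx = \sum_j |\gamma_f(j)|^2 \,|e^{-2\pi i j h} - 1|^2 = 4 \sum_j |\gamma_f(j)|^2 \sin^2(\pi j h).
\]
Since $\|\Delta_h f\|_2^2 = \int_0^{1-h} |f(x+h) - f(x)|^2 dx$ integrates over a subdomain of $[0,1]$ with the non-periodic shift (which agrees with the periodic shift on $[0, 1-h]$), we get $\|\Delta_h f\|_2^2 \leq 4 \sum_j |\gamma_f(j)|^2 \sin^2(\pi j h)$. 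Using the elementary bound $\sin^2(\pi j h) \leq \min(\pi^2 j^2 h^2, 1)$, which is nondecreasing in $h$, the sup over $h \leq t$ can be taken termwise:
\[
\omega(f,t)_2^2 \leq 4 \sum_j |\gamma_f(j)|^2 \min(\pi^2 j^2 t^2, 1).
\]

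Finally, I would integrate in $t$ and use Fubini (nonnegative integrands). The $j = 0$ term vanishes; for $j \neq 0$, the substitution $u = \pi |j| t$ gives
\[
\int_0^\infty t^{-2\alpha-1} \min(\pi^2 j^2 t^2, 1)\, dt = (\pi|j|)^{2\alpha} \int_0^\infty u^{-2\alpha-1} \min(u^2, 1)\, du = C_\alpha |j|^{2\alpha},
\]
where the $u$-integral converges precisely because $0 < \alpha < 1$ (the factor $\min(u^2,1)$ tames the $u \to 0$ singularity for $\alpha < 1$ and the $u \to \infty$ tail for $\alpha > 0$). Summing gives $|f|_{B_{2,2}^\alpha}^2 \leq 4 C_\alpha |f|_{2,\alpha}^2$, and combined with the $L^2$ bound this yields $\|f\|_{B_{2,2}^\alpha} \leq C'_\alpha \|f\|_{2,\alpha}$.

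The only mild subtlety, which I would flag explicitly, is the passage from the non-periodic modulus $\|\Delta_h f\|_2$ (integral over $[0, 1-h]$) to the periodic one (integral over $[0,1]$ with the periodic shift); this is what makes the Fourier-based bound applicable and goes in the direction we need (an upper bound). The rest is the standard Littlewood–Paley-style calculation that the $\ell^2$ weights $|j|^{2\alpha}$ on Fourier coefficients capture exactly the $B_{2,2}^\alpha$-regularity through the integral $\int_0^\infty t^{-2\alpha-1} \min(j^2 t^2, 1)\, dt \asymp |j|^{2\alpha}$.
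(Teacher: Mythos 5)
Your proof is correct and takes essentially the same approach as the paper: pass to the periodic modulus of smoothness, express it via Parseval as a weighted sum of $\sin^{2}$ terms, bound $\sin^{2}$ by $\min(\cdot^{2},1)$, and exchange the $t$-integration with the $j$-sum. The only cosmetic difference is that the paper first discretizes the $t$-integral into a sum over scales $n^{-1}$ (integrating over $((n+1)^{-1},n^{-1})$) before swapping with the $j$-sum, whereas you keep $t$ continuous and substitute $u=\pi|j|t$ directly; the two are equivalent, and you have correctly flagged the one real subtlety, namely that the non-periodic modulus $\omega(f,t)_{2}$ is dominated by its periodic counterpart $\tilde\omega(f,t)_{2}$, which is the quantity Parseval actually controls.
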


\begin{proof}
We will first establish the inequality%
\begin{equation}
\left\Vert f\right\Vert _{B_{p,q}^{\alpha}}^{2}\leq C_{\alpha}\;\left(
\sum_{n=1}^{\infty}n^{2\alpha-1}\omega^{2}(f,n^{-1})_{2}+\left\Vert
f\right\Vert _{2}^{2}\right)  .\label{first-norm-inequal}%
\end{equation}
To this end, note that for $h\geq1$ we have $\omega^{2}(f,h)_{2}=\omega
^{2}(f,1)_{2}$ and therefore, by integrating over intervals $((n+1)^{-1}%
,n^{-1})$
\[
\int_{0}^{\infty}\left(  \frac{\omega(f,t)_{2}^{2}}{t^{2\alpha}}\right)
\frac{dt}{t}\leq\sum_{n=1}^{\infty}(n+1)^{2\alpha+1}\omega^{2}(f,n^{-1}%
)_{2}\left(  \frac{1}{n(n+1)}\right)  +\int_{1}^{\infty}\left(  \frac
{\omega(f,1)_{2}^{2}}{t^{2\alpha}}\right)  \frac{dt}{t}%
\]
which in view of $\omega(f,1)_{2}^{2}\leq4\left\Vert f\right\Vert _{2}^{2}$
gives a bound
\[
\left\vert f\right\vert _{B_{2,2}^{\alpha}}^{2}\leq2^{2\alpha}\sum
_{n=1}^{\infty}n^{2\alpha-1}\omega^{2}(f,n^{-1})_{2}+4\left\Vert f\right\Vert
_{2}^{2}\int_{1}^{\infty}t^{-(2\alpha+1)}dt.
\]
and thus (\ref{first-norm-inequal}). Define a periodic version of $\left\Vert
\Delta_{h}f\right\Vert _{2}^{2}$ by first extending the function $f$ outside
$[0,1]$ periodically, and then setting
\[
\left\Vert \tilde{\Delta}_{h}f\right\Vert _{2}^{2}:=\int_{0}^{1}\left\vert
f(x)-f(x+h)\right\vert ^{2}dx.
\]
The periodic modulus of smoothness is then
\begin{equation}
\tilde{\omega}(f,t)_{2}:=\sup_{0<h\leq t}\left\Vert \tilde{\Delta}%
_{h}f\right\Vert _{2}.\label{periodic-modulus}%
\end{equation}
Evidently we have $\omega(f,t)_{2}\leq\tilde{\omega}(f,t)_{2}$ for all $t>0 $.
Now
\begin{align*}
\left\Vert \tilde{\Delta}_{h}f\right\Vert _{2}^{2}  & =\sum_{j=-\infty
}^{\infty}\gamma_{f}^{2}(j)\left\vert 1-\exp(ijh)\right\vert ^{2}%
=4\sum_{j=-\infty}^{\infty}\gamma_{f}^{2}(j)\sin^{2}\left(  \frac{jh}%
{2}\right) \\
& \leq\sum_{|j|\leq n}\gamma_{f}^{2}(j)j^{2}h^{2}+4\sum_{j|>n}\gamma_{f}%
^{2}(j).
\end{align*}
Consequently%
\[
\sum_{n=1}^{\infty}n^{2\alpha-1}\omega^{2}(f,n^{-1})_{2}\leq\sum_{n=1}%
^{\infty}n^{2\alpha-1}\tilde{\omega}^{2}(f,n^{-1})_{2}%
\]%
\begin{align*}
& \leq\sum_{n=1}^{\infty}n^{2\alpha-1}\sum_{|j|\leq n}\gamma_{f}^{2}%
(j)j^{2}n^{-2}+4\sum_{n=1}^{\infty}n^{2\alpha-1}\sum_{|j|>n}\gamma_{f}%
^{2}(j)\\
& =\sum_{j=-\infty}^{\infty}\gamma_{f}^{2}(j)j^{2}\sum_{n\geq|j|}n^{2\alpha
-3}+4\sum_{j=-\infty}^{\infty}\gamma_{f}^{2}(j)\sum_{n<|j|}n^{2\alpha-1}\\
& \leq C_{\alpha}\;\sum_{j=-\infty}^{\infty}\gamma_{f}^{2}(j)|j|^{2\alpha
}+C_{\alpha}\;\sum_{j=-\infty}^{\infty}\gamma_{f}^{2}(j)|j|^{2\alpha}\leq
C_{\alpha}\;\left\Vert f\right\Vert _{2,\alpha}^{2}%
\end{align*}
which in conjunction with (\ref{first-norm-inequal}) proves the claim.
\end{proof}

For any $f\in L_{2}(0,1)$ (real-valued) and uneven $n$, let
\[
\tilde{f}_{n}(x)=\sum_{|j|\leq(n-1)/2}\gamma_{f}(j)\exp(2\pi ijx)
\]
be its truncated Fourier series. The letter $C$ denotes generic constants
depending on $\alpha$ but not on $f$.

\begin{lemma}
\label{lem-sobol-embed} For $1/2<\alpha<1$ we have
\begin{align*}
W^{\alpha}  & \hookrightarrow C^{\alpha-1/2},\\
\left\Vert f-\tilde{f}_{n}\right\Vert _{\infty}  & \leq Cn^{1/2-\alpha
}\;\left\Vert f\right\Vert _{2,\alpha}.
\end{align*}

\end{lemma}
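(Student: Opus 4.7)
My plan is to prove both statements by direct Fourier-analytic estimates on the series $f(x) = \sum_j \gamma_f(j) \exp(2\pi i j x)$, relying on Cauchy--Schwarz to trade the $\ell^2$-weighted control provided by $\|f\|_{2,\alpha}$ for $\ell^1$ control of the Fourier coefficients. The exponent $\alpha>1/2$ is exactly what makes the relevant tail sums $\sum_{|j|>N}|j|^{-2\alpha}$ converge, and the rate at which they go to zero produces the H\"older exponent $\alpha-1/2$.

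For the second statement first, which is the simplest: since $f-\tilde f_n=\sum_{|j|>(n-1)/2}\gamma_f(j)\exp(2\pi ijx)$, I would bound
\[
\|f-\tilde f_n\|_\infty\le \sum_{|j|>(n-1)/2}|\gamma_f(j)|
\le \Bigl(\sum_{|j|>(n-1)/2}|j|^{-2\alpha}\Bigr)^{1/2}\Bigl(\sum_j|j|^{2\alpha}\gamma_f^2(j)\Bigr)^{1/2},
\]
and observe that $\sum_{|j|>(n-1)/2}|j|^{-2\alpha}\le C\,n^{1-2\alpha}$, giving exactly the claimed rate $Cn^{1/2-\alpha}\|f\|_{2,\alpha}$.

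For the embedding $W^\alpha\hookrightarrow C^{\alpha-1/2}$, I first use the same Cauchy--Schwarz argument without truncation to obtain $\|f\|_\infty\le C\|f\|_{2,\alpha}$ (here one needs $2\alpha>1$ for summability of $\sum_{j\ne 0}|j|^{-2\alpha}$). Then, for the H\"older seminorm, I write
\[
|f(x)-f(y)|\le \sum_j|\gamma_f(j)|\,\min\bigl(2,\,2\pi|j|\,|x-y|\bigr),
\]
and split the sum at $N=\lceil 1/|x-y|\rceil$. On $|j|\le N$ I factor out $2\pi|x-y|$ and apply Cauchy--Schwarz to $\sum_{|j|\le N}|j|^{2-2\alpha}$, which is $O(N^{3-2\alpha})$ for $\alpha<3/2$; on $|j|>N$ I use the tail estimate from part (ii). Both contributions are $O(N^{1/2-\alpha}\|f\|_{2,\alpha})=O(|x-y|^{\alpha-1/2}\|f\|_{2,\alpha})$, which yields the H\"older bound and completes the embedding.

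There is no real obstacle here --- this is a classical Sobolev embedding proof on the torus, and the only items to watch are the elementary sum estimates $\sum_{|j|\le N}|j|^{2-2\alpha}\le CN^{3-2\alpha}$ and $\sum_{|j|>N}|j|^{-2\alpha}\le CN^{1-2\alpha}$, both valid for $1/2<\alpha<1$ with constants depending only on $\alpha$. The critical role of $\alpha>1/2$ appears in two places: summability of $\sum|j|^{-2\alpha}$ (needed for the $\|f\|_\infty$ bound, hence making $\tilde f_n$ well-defined as a continuous function) and positivity of the H\"older exponent $\alpha-1/2$.
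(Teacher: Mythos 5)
Your proof of the second bound (the rate $\left\Vert f-\tilde f_n\right\Vert_\infty \le Cn^{1/2-\alpha}\left\Vert f\right\Vert_{2,\alpha}$) coincides with the paper's: both are a one-line Cauchy--Schwarz on the tail of the Fourier series. For the embedding $W^\alpha \hookrightarrow C^{\alpha-1/2}$ you take a genuinely different route. The paper first proves $W^\alpha \hookrightarrow B_{2,2}^\alpha$ (Lemma~\ref{lem-period-imbed}) by estimating the periodic modulus of smoothness in terms of Fourier coefficients, and then invokes the general Besov embedding chain $B_{2,2}^{\alpha}\hookrightarrow B_{\infty,2}^{\alpha-1/2}\hookrightarrow B_{\infty,\infty}^{\alpha-1/2}\hookrightarrow C^{\alpha-1/2}$ cited from Besov--Il'in--Nikol'skii and Triebel. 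You instead give a direct, self-contained Fourier argument: bound $|f(x)-f(y)|\le\sum_j|\gamma_f(j)|\min(2,2\pi|j|\,|x-y|)$, split the sum at $N\asymp 1/|x-y|$, and apply Cauchy--Schwarz to $\sum_{|j|\le N}|j|^{2-2\alpha}=O(N^{3-2\alpha})$ and $\sum_{|j|>N}|j|^{-2\alpha}=O(N^{1-2\alpha})$, both contributions coming out to $O(|x-y|^{\alpha-1/2}\|f\|_{2,\alpha})$ as claimed. Your estimates check out (the $j=0$ term cancels in the difference, $\alpha<3/2$ is amply satisfied since $\alpha<1$, and $\alpha>1/2$ supplies both summability and positivity of the H\"older exponent). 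The trade-off: the paper reuses Lemma~\ref{lem-period-imbed}, which it needs elsewhere, and so gets the embedding almost for free by citation; your proof avoids the Besov-space machinery entirely, is more elementary and transparent, but proves nothing beyond this particular embedding. Both are correct.
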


\begin{proof}
The first relation follows from Lemma \ref{lem-period-imbed} and the embedding
(\ref{embed-sequence}). The second then follows from the Cauchy-Schwartz
inequality via%
\[
\left\Vert f-\tilde{f}_{n}\right\Vert _{\infty}^{2}\leq\left(  \sum
_{|j|>(n-1)/2}j^{2\alpha}\gamma_{f}^{2}(j)\right)  \left(  \sum_{|j|>(n-1)/2}%
j^{-2\alpha}\right)  \leq\left\Vert f\right\Vert _{2,\alpha}^{2}%
\;C\;n^{1-2\alpha}.
\]

\end{proof}

Let $\omega_{j,n}$ be the midpoint of the interval $W_{j,n}$ $j=1,\ldots,n$
(cf. (\ref{confer-intervals})).

\begin{lemma}
\label{lem-midpoints-appr}For $1/2<\alpha<1$ and $f\in\tilde{B}_{2,2}^{\alpha
}$ we have along uneven $n$
\[
\sum_{j=1}^{n}\left(  \tilde{f}_{n}(\omega_{j,n})-J_{j,n}(f)\right)  ^{2}\leq
C_{\alpha}\;n^{1-2\alpha}\;\left\Vert f\right\Vert _{2,\alpha}^{2}.
\]

\end{lemma}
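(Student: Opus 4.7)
The plan is to split
\[
\tilde{f}_{n}(\omega_{j,n})-J_{j,n}(f)=\bigl[\tilde{f}_{n}(\omega_{j,n})-J_{j,n}(\tilde{f}_{n})\bigr]+\bigl[J_{j,n}(\tilde{f}_{n}-f)\bigr],
\]
and bound each bracket by $C\,n^{1-2\alpha}\|f\|_{2,\alpha}^{2}$ after squaring and summing over $j$. Then $(a+b)^{2}\le 2a^{2}+2b^{2}$ finishes the lemma.

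\textbf{Second bracket (truncation error).} Since $J_{j,n}$ is an interval average, Cauchy--Schwarz gives $|J_{j,n}(\tilde{f}_{n}-f)|^{2}\le n\int_{W_{j,n}}(\tilde{f}_{n}-f)^{2}$; summing in $j$ yields $\sum_{j}|J_{j,n}(\tilde{f}_{n}-f)|^{2}\le n\|\tilde{f}_{n}-f\|_{2}^{2}$. By Parseval and the definition of $\|\cdot\|_{2,\alpha}$,
\[
\|\tilde{f}_{n}-f\|_{2}^{2}=\sum_{|k|>(n-1)/2}\gamma_{f}^{2}(k)\le \bigl((n-1)/2\bigr)^{-2\alpha}\sum_{|k|>(n-1)/2}|k|^{2\alpha}\gamma_{f}^{2}(k)\le C\,n^{-2\alpha}\|f\|_{2,\alpha}^{2},
\]
giving the desired $C\,n^{1-2\alpha}\|f\|_{2,\alpha}^{2}$ bound.

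\textbf{First bracket (midpoint vs.\ mean of $\tilde{f}_{n}$).} I would compute the Fourier modes directly: for $k\ne 0$,
\[
n\int_{(j-1)/n}^{j/n}e^{2\pi \mathrm{i}kx}\,dx=\frac{n}{2\pi \mathrm{i}k}e^{2\pi\mathrm{i}k(j-1)/n}\bigl(e^{2\pi\mathrm{i}k/n}-1\bigr)=\mathrm{sinc}(k/n)\,e^{2\pi\mathrm{i}k\omega_{j,n}},
\]
with $\mathrm{sinc}(x)=\sin(\pi x)/(\pi x)$, so that $J_{j,n}(\tilde{f}_{n})=\gamma_{f}(0)+\sum_{0<|k|\le(n-1)/2}\gamma_{f}(k)\,\mathrm{sinc}(k/n)\,e^{2\pi\mathrm{i}k\omega_{j,n}}$. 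Subtracting from $\tilde{f}_{n}(\omega_{j,n})$ yields the clean identity
\[
\tilde{f}_{n}(\omega_{j,n})-J_{j,n}(\tilde{f}_{n})=\sum_{0<|k|\le(n-1)/2}\gamma_{f}(k)\bigl(1-\mathrm{sinc}(k/n)\bigr)e^{2\pi\mathrm{i}k\omega_{j,n}}.
\]
Now I invoke the discrete orthogonality $\sum_{j=1}^{n}e^{2\pi\mathrm{i}(k-k')\omega_{j,n}}=n\,\delta_{k,k'}$ (valid because $|k|,|k'|\le(n-1)/2$ prevents aliasing), which collapses the $j$-sum to
\[
\sum_{j=1}^{n}|\tilde{f}_{n}(\omega_{j,n})-J_{j,n}(\tilde{f}_{n})|^{2}=n\sum_{0<|k|\le(n-1)/2}\gamma_{f}^{2}(k)\bigl(1-\mathrm{sinc}(k/n)\bigr)^{2}.
\]

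\textbf{Sinc estimate.} The alternating Taylor series $1-\mathrm{sinc}(x)=(\pi x)^{2}/6-(\pi x)^{4}/120+\cdots$ gives $|1-\mathrm{sinc}(x)|\le \pi^{2}x^{2}/6$ for $|x|\le 1/2$; since $|k/n|\le 1/2$ and $2\alpha<4$, this yields $(1-\mathrm{sinc}(k/n))^{2}\le C(k/n)^{4}\le C(k/n)^{2\alpha}$. Plugging back,
\[
n\sum_{k}\gamma_{f}^{2}(k)(1-\mathrm{sinc}(k/n))^{2}\le C\,n^{1-2\alpha}\sum_{k}|k|^{2\alpha}\gamma_{f}^{2}(k)\le C\,n^{1-2\alpha}\|f\|_{2,\alpha}^{2},
\]
which completes the bound for the first bracket and hence the lemma.

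\textbf{Main obstacle.} The only non-routine step is recognizing that the exact integral of $e^{2\pi\mathrm{i}kx}$ over a subinterval produces a $\mathrm{sinc}(k/n)$ factor aligned with the midpoint phase $e^{2\pi\mathrm{i}k\omega_{j,n}}$, making the discrete Parseval identity applicable with no cross terms. Once that algebraic observation is in hand, the sinc-smallness $|1-\mathrm{sinc}(k/n)|\lesssim(k/n)^{\alpha}$ on $|k|\le(n-1)/2$ yields the $n^{-2\alpha}$ decay needed to match the target rate; the rest is bookkeeping.
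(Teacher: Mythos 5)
Your proposal is correct, and its handling of the main term differs genuinely from the paper's argument. Both proofs split $\tilde{f}_{n}(\omega_{j,n})-J_{j,n}(f)$ into a truncation part $J_{j,n}(\tilde{f}_{n}-f)$ and a midpoint-vs-mean part $\tilde{f}_{n}(\omega_{j,n})-J_{j,n}(\tilde{f}_{n})$, and both handle the truncation part identically via Cauchy--Schwarz and $\left\Vert f-\tilde{f}_{n}\right\Vert _{2}^{2}\leq Cn^{-2\alpha}\left\Vert f\right\Vert _{2,\alpha}^{2}$. For the midpoint-vs-mean part the paper writes $\tilde{f}_{n}(x)-\tilde{f}_{n}(\omega_{j,n})=\int_{\omega_{j,n}}^{x}D\tilde{f}_{n}$, applies Cauchy--Schwarz twice to get $\sum_{j}\bigl(\tilde{f}_{n}(\omega_{j,n})-J_{j,n}(\tilde{f}_{n})\bigr)^{2}\leq n^{-1}\int\bigl(D\tilde{f}_{n}\bigr)^{2}$, and then uses continuous Parseval plus $|j|^{2-2\alpha}\leq Cn^{2-2\alpha}$ for $|j|\leq(n-1)/2$; this is a Bernstein-type bound on the derivative of a trigonometric polynomial of degree $O(n)$. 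You instead compute $J_{j,n}$ applied to each Fourier mode exactly, extracting the factor $\mathrm{sinc}(k/n)\,e^{2\pi\mathrm{i}k\omega_{j,n}}$, and then exploit discrete orthogonality of $\{e^{2\pi\mathrm{i}k\omega_{j,n}}\}_{|k|\leq(n-1)/2}$ on the midpoint grid (no aliasing since $|k-k'|<n$) to produce the identity $\sum_{j}\bigl|\tilde{f}_{n}(\omega_{j,n})-J_{j,n}(\tilde{f}_{n})\bigr|^{2}=n\sum_{k}|\gamma_{f}(k)|^{2}\bigl(1-\mathrm{sinc}(k/n)\bigr)^{2}$, after which the elementary estimate $|1-\mathrm{sinc}(x)|\leq\pi^{2}x^{2}/6$ does the rest. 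Your route yields an exact discrete-Parseval identity for the problematic sum rather than a chain of inequalities, and in fact shows the true decay is $O(n^{-3})$ rather than just $n^{1-2\alpha}$ (your bound $|1-\mathrm{sinc}(k/n)|\lesssim(k/n)^{2}$ is stronger than the $(k/n)^{\alpha}$ that suffices, as you note); the paper's route avoids explicit Fourier computation at the grid points but gives only an upper bound. Both are valid, and your derivation is arguably the more transparent one for this specific quantity.
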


\begin{proof}
Note that
\[
\sum_{j=1}^{n}\left(  \tilde{f}_{n}(\omega_{j,n})-J_{j,n}(f)\right)  ^{2}\leq
\]%
\[
\leq2\sum_{j=1}^{n}\left(  J_{j,n}(\tilde{f}_{n})-J_{j,n}(f)\right)
^{2}+2\sum_{j=1}^{n}\left(  \tilde{f}_{n}(\omega_{j,n})-J_{j,n}(\tilde{f}%
_{n})\right)  ^{2}.
\]
Here by Parseval's relation and the projection property of $\bar{f}_{n}$ the
first term is bounded by
\[
2n\left\Vert f-\tilde{f}_{n}\right\Vert _{2}^{2}\leq2n\left(  \frac{2}%
{n-1}\right)  ^{2\alpha}\left\Vert f\right\Vert _{2,\alpha}^{2}\leq C_{\alpha
}\;n^{1-2\alpha}\;\left\Vert f\right\Vert _{2,\alpha}^{2}.
\]
Thus it remains to show that
\begin{equation}
\sum_{j=1}^{n}\left(  \tilde{f}_{n}(\omega_{j,n})-J_{j,n}(\tilde{f}%
_{n})\right)  ^{2}\leq C_{\alpha}\;n^{1-2\alpha}\;\left\Vert f\right\Vert
_{2,\alpha}^{2}.\label{remains-1}%
\end{equation}
We have
\begin{align*}
\left\vert \tilde{f}_{n}(\omega_{j,n})-J_{j,n}(\tilde{f}_{n})\right\vert  &
\leq n\int_{W_{j,n}}\left\vert \tilde{f}_{n}(x)-\tilde{f}_{n}(\omega
_{j,n})\right\vert dx\leq n\int_{W_{j,n}}\left\vert \int_{x}^{\omega_{j,n}%
}D\tilde{f}_{n}(t)dt\right\vert dx\\
& \leq n\int_{W_{j,n}}\left\vert x-\omega_{j,n}\right\vert ^{1/2}\left(
\int_{x}^{\omega_{j,n}}\left(  D\tilde{f}_{n}(t\right)  ^{2}dt\right)
^{1/2}dx\\
& \leq n^{-1/2}\left(  \int_{W_{j,n}}\left(  D\tilde{f}_{n}(t\right)
^{2}dt\right)  ^{1/2}.
\end{align*}
Consequently%
\[
\sum_{j=1}^{n}\left(  \tilde{f}_{n}(\omega_{j,n})-J_{j,n}(\tilde{f}%
_{n})\right)  ^{2}\leq n^{-1}\int_{[0,1]}\left(  D\tilde{f}_{n}(t\right)
^{2}dt.
\]
By termwise differentiation and Parseval's relation the right side equals
\begin{align*}
n^{-1}\sum_{\left\vert j\right\vert \leq(n-1)/2}j^{2}\gamma_{f}^{2}(j)  &
=\sum_{\left\vert j\right\vert \leq(n-1)/2}\left(  \frac{|j|^{2-2\alpha}}%
{n}\right)  |j|^{2\alpha}\gamma_{f}^{2}(j)\\
& \leq n^{1-2\alpha}\left\Vert f\right\Vert _{2,\alpha}^{2}%
\end{align*}
which establishes (\ref{remains-1}).
\end{proof}

\begin{remark}
\label{rem-periodic-besov}Periodic Besov spaces $\tilde{B}_{p,q}^{\alpha}$.
\emph{These can be defined for }$0<\alpha<1$\emph{\ and }$1\leq p,q\leq\infty
$\emph{\ analogously to the spaces }$B_{p,q}^{\alpha}$ \emph{as above, using
an periodic increment norm }$\left\Vert \tilde{\Delta}_{h}f\right\Vert _{p}%
$\emph{\ and a periodic modulus of smoothness }$\tilde{\omega}(f,t)_{p}%
$\emph{\ defined analogously to (\ref{periodic-modulus}). Clearly then
}$\tilde{B}_{p,q}^{\alpha}\hookrightarrow B_{p,q}^{\alpha}$\emph{. An
intrinsic characterization in terms of Fourier coefficients }$\gamma_{f}%
(k)$\emph{\ is as follows: for }$p\geq2$\emph{\ and }$1/2<\alpha<1$\emph{\ the
expression }%
\[
\left(  \sum_{j=0}^{\infty}2^{j\alpha q}\left\Vert \sum_{2^{j-1}-1<\left\vert
k\right\vert <2^{j}}\gamma_{f}(k)\exp(2\pi ikx)\right\Vert _{p}^{q}\right)
^{1/q}%
\]
\emph{is an equivalent norm in }$\tilde{B}_{p,q}^{\alpha}$\emph{; cf.
Nikolskii, sec. 5.6, relation (6) (cp. also Triebel (1983), definition
2.3.1/2). }.
\end{remark}

\begin{tabular}
[t]{ll}%
\textsc{Universit\'{e} de Provence} & \textsc{Department of Mathematics}\\
\textsc{CMI} & \textsc{Malott Hall}\\
\textsc{39, rue F.Joliot-Curie} & \textsc{Cornell University}\\
\textsc{13453 Marseille, Cedex 13, France} & \textsc{Ithaca NY 14853}\\
\textsc{e-mail:}
golubev@gyptis.univ-mrs.fr\ \ \ \ \ \ \ \ \ \ \ \ \ \ \ \ \ \  &
\textsc{e-mail:} nussbaum@math.cornell.edu
\end{tabular}

\bigskip%

\begin{tabular}
[t]{ll}%
\textsc{Department of Statistics} & \\
\textsc{Yale University} & \\
\textsc{P.O. Box 208290} & \\
\textsc{New Haven CT 06520} & \\
\textsc{e-mail:} huibin.zhou@yale.edu\ \ \ \ \ \ \ \ \ \ \ \ \ \ \ \ \ \  &
\end{tabular}

\end{document}